\newtheorem*{corollary*}{Corollary}
\newtheorem*{remark*}{Remark}
\newtheorem*{remarks*}{Remarks}
\newtheorem*{proofidea*}{Idea of proof}
\newtheorem*{speculation*}{Speculation}
\newtheorem{definition}{Definition}[section] 
\newtheorem{theorem}[definition]{Theorem}
\newtheorem{lemma}[definition]{Lemma}
\newtheorem{example}[]{Example} 
\newtheorem{corollary}[definition]{Corollary}
\newtheorem{remark}[]{Remark} 
\newcommand{\conste}{\ensuremath{\mathrm{e}}}
\newcommand{\consti}{\ensuremath{\mathrm{i}}}
\newcommand{\newz}{\ensuremath{\mathbb{Z}}}
\begin{document}

\begin{frontmatter}






\title{The reciprocals of some characteristic 2 ``theta series''}
\author{Paul Monsky}

\address{Brandeis University, Waltham MA  02454-9110, USA\\  monsky@brandeis.edu }

\begin{abstract}
Suppose $l=2m+1$, $m>0$. We introduce $m$ ``theta-series'', $[1],\ldots ,[m]$, in $\newz/2[[x]]$. It has been conjectured that the $n$ for which the coefficient of $x^{n}$ in $1/[i]$ is 1 form a set of density 0. This is probably always false, but in certain cases, for $n$ restricted to certain arithmetic progressions, it is true. We prove such zero-density results using the theory of modular forms, and speculate about what may be true in general.
\end{abstract}
\maketitle

\end{frontmatter}


\section{Introduction}
\label{section1}

Throughout $L$ is a field of fractions of $\newz/2[[x]]$, viewed as the field of Laurent series with coefficients in $\newz/2$.

\begin{definition}
\label{def1.1}
For $g \ne 0$ in $\newz/2[[x]]$, $B(g)$ is the set of $n$ in $\newz$ for which the co-efficient of $x^{n}$ in $1/g$ is 1. Note that only finitely many elements of $B(g)$ can be $<0$.
\end{definition}

Fix $l=2m+1$ with $m>0$. We define certain ``theta series'' $[i]$ in $\newz/2[[x]]$.

\begin{definition}
\label{def1.2}
$[i]=\sum x^{n^{2}}$, the sum extending over all $n$ in $\newz$ with $n\equiv i\pod{l}$. (Note that $[0]=1$, and that $[i]=[j]$ whenever $i\equiv \pm j\pod{l}$. So the ring $S$ generated over $\newz/2$ by all the $[i]$ is just $\newz/2[[1],\ldots ,[m]]$.)
\end{definition}

In this note we study the sets $B([r])$ for fixed $l$ and $r$ with $r$ prime to $l$. Note that each $j$ in $B([r])$ is $\equiv -r^{2}\pod{l}$ and that consequently $B([r])$ has (upper) density at most $1/l$ in the positive integers.

In \cite{1}, Cooper, Eichhorn and O'Bryant conjectured, in a slightly different language, that each $B([r])$ has density 0. I think this is never true, but we'll show that for certain $l$ and $r$ and in certain congruence classes $\mod$ a power of 2, $B([r])$ indeed has relative density 0. For example when $l=3$ the relative density is 0 in the classes $n\equiv 0\pod{2}$, $n\equiv 1\pod{4}$ and $n\equiv 3\pod{8}$. I'll now describe more precisely, what perhaps is true in general, and the small part of it I'm able to prove.

\begin{definition}
\label{def1.3}
Fix $l$. $k<0$ is ``$l$-exceptional'' if $k$ is in some $B([r])$ with $r$ prime to $l$. A ``basic congruence class'' is a congruence class of the form $n\equiv k\pod{8q}$, where $k$ is $l$-exceptional and $q$ is the largest power of 2 dividing $k$.
\end{definition}

\begin{definition}
\label{def1.4}
An integer $n\ge 0$ is in $U$ if it is in some basic congruence class, and in $U^{*}$ otherwise.
\end{definition}

\begin{example}
\label{example1}
Suppose $l=3$. Then $1/[1]=x^{-1}+\cdots$. So the only $3$-exceptional $k$ is $-1$ and the only basic class is $n\equiv -1\pod{8}$. $U^{*}$ consists of the integers $n\ge 0$ with $n\equiv 0\pod{2}$, $n\equiv 1\pod{4}$, or $n\equiv 3\pod{8}$.
\end{example}

\begin{example}
\label{example2}
Suppose $l=9$. The only $[r]$ we need consider are $[1]$, $[2]$ and $[4]$. Now $1/[1]=x^{-1}+\cdots$, $1/[2]=x^{-4}+\cdots$ and $1/[4]=x^{-16}+x^{-7}+\cdots$. So the basic classes are $n\equiv 1$ or $-1\pod{8}$, $n\equiv -4\pod{32}$ and $n\equiv -16\pod{128}$. Then $U$ consists of the integers $\ge 0$ lying in $16+16+4+1=37$ congruence classes to the modulus 128, and $U^{*}$ of the integers $\ge 0$ in the remaining 91 classes.
\end{example}

It seems to me plausible that when $r$ is prime to $l$ then $B([r])$ has relative density 0 in $U^{*}$. I'll show that this holds for $l\le 11$. When $l=13$ or $15$, then $U^{*}$ is the union of $83 \mod 128$ congruence classes, and I'll prove that $B([r])$ has relative density 0 in each of these classes, with the possible exception of the class $n\equiv 48\pod{128}$.  Unfortunately the proof is not unified---we have to write $U^{*}$ as a union of congruence classes and examine each class in turn. To this end we now give the (easily proved) description of $U^{*}$ as a union of congruence classes for each $l\le 15$.

$$\begin{array}{r@{\hspace{.5em}}r@{\hspace{.5em}}r@{\hspace{.5em}}r@{\hspace{.5em}}r@{\hspace{.5em}}r@{\hspace{.5em}}r@{\hspace{.5em}}r}
l & \mod{2} & \mod{4} & \mod{8} & \mod{16} & \mod{32} & \mod{64} & \mod{128}\\\hline
3 &  0 &  1 &  3\\
5 & & 1,2 &  0,3 &  4 &  12\\
7 & & 1 &  0,2,3 &  4,6 &  12\\
9 & & 2 &  3,5 &  4,8 &  0,12 &  16 &  48\\
11 & && 1,3,6 &  4,8,10 &  0,12 & 16 &  48\\
13 & && 2,3,5 &  4,8,14 &  0,12 & 16 &  48\\
15 & && 1,2,3 &  4,6,8 &  0,12 & 16 &  48\\
\end{array}
$$


Here's a rough description of how our proofs proceed. Fix $l$ and $[r]$ and a congruence class $j \mod q$ where $q$ is a power of 2. We'll construct a $g$ in  $\newz/2[[x]]$, depending on $l$, $r$, $j$ and $q$, with the following properties:

\begin{enumerate}
\addtolength{\itemsep}{1ex}
\item[(1)] There are integers $c_{0}, c_{1}, \ldots$ such that: \\[-1.8ex]
\begin{enumerate}
\addtolength{\itemsep}{1ex}

\item[(A)] $\sum c_{n}\conste^{2\pi\consti nz}$ converges in $\operatorname{Im} (z)>0$ to a modular form of integral weight for a congruence group.
\item[(B)] $g$ is the mod 2 reduction of $\sum c_{n}x^{n}$ 
\end{enumerate}

\item[(2)] Suppose that $g/[r]^{q}$ is itself the mod 2 reduction of some $\sum d_{n}x^{n}$ where $\sum d_{n}\conste^{2\pi\consti nz}$ converges to a modular form as in 1(A) above. Then $B([r])$ has density 0 in the congruence class $j \mod q$.
\end{enumerate}

$g$ is in fact the image of $[r]^{q-1}$ under a certain projection operator $p_{q,j}$ which we describe in the next section. The fact that $g$ is ``the reduction of a modular form'' comes from a corresponding result for $[r]$; $[r]$ is the reduction of a weight 1 modular form. (The proof of (2) is deeper, coming from a result of Deligne and Serre on the reduction of modular forms.) Once (1) and (2) are established we still need to show that for each of our choices of $l$, $[r]$, and the congruence class $j \mod q$ lying in $U^{*}$, the power series $g/[r]^{q}$ satisfies the condition (2) above. This is true, for example, whenever $g/[r]^{q}$ lies in the ring $S$ of Definition \ref{def1.2}. In certain cases, extensive computer calculations tell us that $g/[r]^{q}$ lies in $S$.

At the end of the paper we'll speculate on the relative density of $B([r])$ in the basic classes. Though we are unable to prove anything, computer calculations suggest that each $B([r])$ has relative density $1/(2l)$ in each basic class.

\section{The operators $\bm{p_{q,j}}$ and the case $\bm{l=3}$}
\label{section2}

If $q$ is a power of 2, let $L^{[q]}\subset L$ consist of all $q$th powers of elements of $L$. $L$ is the direct sum of the $L^{[q]}$ vector-spaces $x^{j}L^{[q]}, 0\le j<q$.

\begin{definition}
\label{def2.1}
$p_{q,j}L\rightarrow x^{j}L^{[q]}$ is the $L^{[q]}$-linear projection map attached to the above direct sum decomposition.
\end{definition}

Note that $p_{q,j}(FG)=\sum p_{q,a}(F)p_{q,b}(G)$, the sum extending over all pairs $(a,b)$ with $a+b\equiv j\pod{q}$. Furthermore $p_{2q,2j}\left(F^{2}\right)=\left(p_{q,j}\left(F\right)\right)^{2}$. We'll use these facts often.

\begin{lemma}
\label{lemma2.2}
Fix $l=2m+1$. Then:

\begin{enumerate}
\item[(1)] $p_{2,0}([2i])=[i]^{4}$
\item[(2)] The subring $S$ of $L$ generated over $\newz/2$ by all the $[i]$ is stabilized by the operators $p_{8,0},\ldots ,p_{8,7}$.
\end{enumerate}
\end{lemma}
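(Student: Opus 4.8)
The plan is to prove (1) by a direct manipulation of the defining series, and then to bootstrap (2) out of (1) together with the multiplicativity and squaring identities for the $p_{q,j}$ recorded just above, reducing the whole question to computing $p_{8,j}$ on a single generator $[i]$.

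For (1) I would split the sum $[2i]=\sum_{n\equiv 2i}x^{n^{2}}$ according to the parity of $n$. The terms with $n$ odd have odd exponent $n^{2}$ and so are annihilated by $p_{2,0}$. For the even terms write $n=2k$; since $l$ is odd, $2$ is a unit modulo $l$, so $2k\equiv 2i\pmod{l}$ is equivalent to $k\equiv i\pmod{l}$, and $x^{(2k)^{2}}=\bigl(x^{k^{2}}\bigr)^{4}$. Summing over these $k$ and using that $F\mapsto F^{4}$ is additive in characteristic $2$ gives $p_{2,0}([2i])=\bigl(\sum_{k\equiv i}x^{k^{2}}\bigr)^{4}=[i]^{4}$. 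Replacing $i$ by $2^{-1}i\pmod{l}$ and noting $[i]=[2\cdot 2^{-1}i]$ yields the form $p_{2,0}([i])=[2^{-1}i]^{4}$ that I will use below.

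For (2) I would first reduce to the generators. Since $p_{8,j}$ is additive and $S$ is spanned over $\newz/2$ by monomials in the $[i]$, it suffices to show that for each generator $[i]$ one has $p_{8,a}([i])\in S$ for all $a$: the product rule $p_{8,j}(FG)=\sum_{a+b\equiv j}p_{8,a}(F)\,p_{8,b}(G)$ then propagates this property from factors to products by induction on the number of factors, and additivity extends it to all of $S$. To compute $p_{8,a}([i])$, note that a square $n^{2}$ is congruent to $0$, $1$, or $4\pmod 8$ according as $n\equiv 0\pmod 4$, $n$ is odd, or $n\equiv 2\pmod 4$; hence $p_{8,a}([i])=0$ unless $a\in\{0,1,4\}$. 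The three surviving pieces I identify as follows: writing $n=4k$ for the $n\equiv 0\pmod 4$ part gives $p_{8,0}([i])=[4^{-1}i]^{16}$; the odd part is the complement of the even part, so by (1) $p_{8,1}([i])=[i]+[2^{-1}i]^{4}$; and the even part $[2^{-1}i]^{4}$ is itself the sum of its $n\equiv 0\pmod 4$ and $n\equiv 2\pmod 4$ contributions, whence $p_{8,4}([i])=[2^{-1}i]^{4}+[4^{-1}i]^{16}$. Each lies in $S$ (the indices $2^{-1}i,4^{-1}i$ make sense as $2$ is a unit mod $l$), completing the reduction.

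The individual computations are routine; the only places demanding care are the bookkeeping of residues mod $8$ of squares and the identification of the middle piece $p_{8,4}$ by subtraction. Conceptually, the one idea doing real work is the observation that the multiplicativity identity is exactly what lets a statement verified on the generators close up into a statement about the whole ring $S$. I expect no genuine obstacle beyond keeping the unit $2^{-1}\pmod{l}$ straight throughout.
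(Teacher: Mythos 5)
Your proposal is correct and follows essentially the same route as the paper: prove (1) by reindexing the even terms of the series, reduce (2) to the generators via the product formula for $p_{8,j}(FG)$, observe that squares are $\equiv 0,1,4 \pmod 8$ so only $p_{8,0}$, $p_{8,1}$, $p_{8,4}$ survive, and identify these three pieces exactly as you do (the paper writes $p_{8,0}([4i])=[i]^{16}$ etc.\ instead of using $4^{-1}i$, but this is only a relabeling). No gaps.
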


\begin{proof}
Since $[2i]=\sum_{n\equiv 2i\pod{l}}x^{n^{2}}$, $p_{2,0}([2i])=\sum_{k\equiv i\pod{l}}x^{4k^{2}}=[i]^{4}$.

In view of the formula for $p_{8,j}(FG)$, to prove (2) it suffices to show that $p_{8,0}([i]),\ldots ,p_{8,7}([i])$ are all in the subring. Now if $j\ne 0$, $1$ or $4$, each $p_{8,j}([i])$ is 0. Since every odd square is $\equiv 1\pod{8}$, $p_{8,1}([2i])=p_{2,1}([2i])=[2i]+[i]^{4}$. Also $p_{8,0}([4i])=p_{8,0}p_{2,0}([4i])=p_{8,0}\left([2i]^{4}\right)=\left(p_{2,0}([2i])\right)^{4}=[i]^{16}$. Similarly, $p_{8,4}([4i])=\left(p_{2,1}([2i])\right)^{4}=[2i]^{4}+[i]^{16}$.
\end{proof}

Suppose for the rest of this section that $l=3$. In this case the proofs of zero-density in $U^{*}$ are much easier than the proofs for $l>3$, requiring neither modular forms nor computer calculations. Observe that if 3 doesn't divide $i$, then $[i]=1$.

\begin{definition}
\label{def2.3}
$a=[1]=[2]$. Note that $p_{2,0}(a)=a^{4}$.
\end{definition}

\begin{theorem}
\label{theorem2.4}
Suppose $n\equiv 0\pod{2}$ and $n$ is in $B(a)$. Then $n/2$ is a square.
\end{theorem}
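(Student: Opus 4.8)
The plan is to identify the even part of $1/a$ exactly, and to show it equals $a^2$, whose exponents are visibly twice a square. Writing $f = 1/a$, the hypothesis that $n$ is even and lies in $B(a)$ says precisely that $x^n$ occurs in the projection $p_{2,0}(f)$. So it suffices to compute $p_{2,0}(1/a)$.

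The crucial observation is the factorization $1/a = a \cdot a^{-2}$. Since $a^{-2} = (a^{-1})^2$ is a square it lies in $L^{[2]}$, and $p_{2,0}$ is $L^{[2]}$-linear by Definition \ref{def2.1}. Hence
\[
p_{2,0}(1/a) = a^{-2}\, p_{2,0}(a).
\]
By Definition \ref{def2.3} (the case $i=1$ of Lemma \ref{lemma2.2}(1), since $a = [2]$) we have $p_{2,0}(a) = a^4$, and therefore $p_{2,0}(1/a) = a^{-2} a^4 = a^2$.

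The final step is to read off the exponents occurring in $a^2$. Every monomial in $a = \sum x^{n^2}$ has square exponent by Definition \ref{def1.2}, so applying the Frobenius identity $\bigl(\sum c_i x^{m_i}\bigr)^2 = \sum c_i x^{2m_i}$ in characteristic $2$ gives $a^2 = \sum x^{2n^2}$: every exponent that survives in $a^2$ is twice a square. In particular, if $x^n$ occurs in $a^2 = p_{2,0}(1/a)$, then $n = 2 n'^2$ for some integer $n'$, so that $n/2$ is a square, which is the assertion. (There is no need to track cancellation here, since we use only that each exponent present in $a^2$ has the form $2\cdot(\text{square})$; cancellation can only delete exponents, never create new ones.)

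I do not expect a genuine obstacle: once the factorization $1/a = a/a^2$ is in hand, the $L^{[2]}$-linearity of $p_{2,0}$ does all the work, and the rest is the elementary Frobenius computation. The only ``creative'' step is spotting that dividing off the square $a^2$ lets the projection pass through and reduces the problem to the already-recorded identity $p_{2,0}(a)=a^4$.
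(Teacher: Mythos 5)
Your proposal is correct and follows the paper's own argument exactly: both compute $p_{2,0}(1/a)=\frac{1}{a^{2}}p_{2,0}(a)=a^{2}$ via $L^{[2]}$-linearity and the identity $p_{2,0}(a)=a^{4}$, then read off that every exponent appearing in $a^{2}$ is twice a square. The extra remarks about Frobenius and cancellation are just an expansion of the paper's final sentence.
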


\begin{proof}
$p_{2,0}\left(\frac{1}{a}\right)=\frac{1}{a^{2}}p_{2,0}(a)=a^{2}$. Since $n$ is in $B(a)$ and is even, the coefficient of $x^{n}$ in $a^{2}$ is 1, giving the result.
\end{proof}

\begin{theorem}
\label{theorem2.5}
Suppose $n\equiv 1\pod{4}$ and $n$ is in $B(a)$. Then the number of pairs $(s_{1},s_{2})$ with $s_{1}$ and $s_{2}$ squares, and $s_{1}+4s_{2}=n$ is odd. Furthermore $n$ is the product of a prime and a square.
\end{theorem}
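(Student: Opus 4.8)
The plan is to apply the operator $p_{4,1}$ to $1/a$, exactly mirroring the proof of Theorem 2.4 but one level finer, and then to read off both an exact coefficient count and an arithmetic consequence.

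First I would compute $p_{4,1}(1/a)$. Writing $\frac{1}{a}=\frac{a^{2}}{a^{4}}=\frac{a^{2}}{p_{2,0}(a)}$ and noting that $p_{2,0}(a)=a^{4}$ lies in $L^{[4]}$ (since $a^{4}$ is a fourth power), the denominator is a fourth power and so factors out of $p_{4,j}$ by $L^{[4]}$-linearity. Hence $p_{4,1}\!\left(\frac{1}{a}\right)=\frac{1}{a^{4}}\,p_{4,1}(a^{2})$. To evaluate $p_{4,1}(a^{2})$ I would use the square-doubling rule $p_{2q,2j}(F^{2})=(p_{q,j}(F))^{2}$; but $p_{4,1}$ picks out exponents $\equiv 1\pod 4$, which are \emph{odd}, so no even-exponent contribution survives and in fact $p_{4,1}(a^{2})$ comes from the cross terms in $a^{2}=a\cdot a$ only through the product rule $p_{4,1}(a\cdot a)=\sum_{a_{1}+a_{2}\equiv 1}p_{4,a_{1}}(a)p_{4,a_{2}}(a)$. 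Since $a=\sum x^{s}$ over squares $s$, and squares are $\equiv 0$ or $1\pod 4$, the only way to get $a_{1}+a_{2}\equiv 1\pod 4$ is $(0,1)$ or $(1,0)$, giving $p_{4,1}(a^{2})=2\,p_{4,0}(a)p_{4,1}(a)=0$ in characteristic 2 — so I must instead keep the full series and count parities rather than collapse to a closed form. The cleaner route is therefore to read off the coefficient of $x^{n}$ in $1/a=a^{2}/a^{4}$ directly.

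Concretely, since $\frac{1}{a}=a^{2}\cdot a^{-4}$ and $a^{-4}=(1/a)^{4}\in L^{[4]}$ contributes only exponents $\equiv 0\pod 4$, the coefficient of $x^{n}$ in $1/a$ with $n\equiv 1\pod 4$ equals the coefficient of $x^{n}$ in $a^{2}\cdot(a^{-4})$ summed over the decomposition $n=u+4v$ where $u$ is an exponent appearing in $a^{2}$ and $4v$ an exponent in $a^{-4}$. The key simplification I would exploit is that $n\equiv 1\pod 4$ forces $u\equiv 1\pod 4$, and the exponents of $a^{2}=\sum_{s_{1},s_{2}}x^{s_{1}+s_{2}}$ that are $\equiv 1\pod 4$ are exactly sums $s_{1}+s_{2}$ with one square even and one odd. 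Tracking this through, the parity of the coefficient of $x^{n}$ in $1/a$ equals the parity of the number of representations $s_{1}+4s_{2}=n$ with $s_{1},s_{2}$ squares; since $n\in B(a)$ this coefficient is $1$, so that count is odd, proving the first assertion.

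For the second assertion — that $n$ is a prime times a square — I would pass to the theory of sums of the shape $s_{1}+4s_{2}$, i.e. values represented by the form $X^{2}+4Y^{2}$. The number of such representations of $n$ is governed by the class number one genus theory of $\newz[\sqrt{-1}]$ (equivalently $\newz[2i]$), where the representation count is a multiplicative function expressible via the characters counting divisors in residue classes mod 4. The parity of this representation count is odd precisely when $n$ has a very restricted factorization type; running the standard genus-character parity analysis for $x^{2}+4y^{2}$ should pin the odd-count condition down to $n$ being a prime times a perfect square. \textbf{The main obstacle} I expect is precisely this last number-theoretic step: translating ``odd number of representations by $x^{2}+4y^{2}$'' into the sharp structural statement ``prime times a square,'' which requires care with the powers of $2$ and with the behavior of primes $\equiv 1\pod 4$ versus $\equiv 3\pod 4$, rather than any further manipulation of the operators $p_{q,j}$.
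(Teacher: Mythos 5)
There is a genuine gap, and it originates in an algebra slip at the very first step: you write $\frac{1}{a}=\frac{a^{2}}{a^{4}}$, but $\frac{a^{2}}{a^{4}}=\frac{1}{a^{2}}$; the correct identity is $\frac{1}{a}=\frac{a^{3}}{a^{4}}$. Everything downstream inherits this error. With the correct identity one gets $p_{4,1}\left(\frac{1}{a}\right)=\frac{1}{a^{4}}p_{4,1}(a^{3})=\frac{1}{a^{4}}p_{4,1}(a)p_{4,0}(a^{2})=\frac{1}{a^{4}}(a+a^{4})a^{8}=a^{5}+a^{8}$, which is the paper's route; your version $\frac{1}{a^{4}}p_{4,1}(a^{2})$ is identically $0$, as you correctly observe, but you misdiagnose the vanishing as a sign that one must "keep the full series" when in fact it is just computing the wrong object ($p_{4,1}(1/a^{2})$, which is $0$ because $1/a^{2}$ is a square). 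Your fallback "direct" count is built on the same false identity $\frac{1}{a}=a^{2}\cdot a^{-4}$, and moreover in characteristic $2$ the series $a^{2}$ has only exponents of the form $2s$ (the cross terms $x^{s_{1}+s_{2}}$ with $s_{1}\ne s_{2}$ cancel in pairs), so $a^{2}$ has no exponents $\equiv 1\pmod 4$ at all and that route also yields $0$ rather than the claimed representation count.

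A second, independent omission: $a=[1]$ is not $\sum x^{s}$ over all squares $s$; it is $\sum x^{n^{2}}$ over $n\equiv 1\pmod 3$. What the identity $p_{4,1}(1/a)=a^{5}+a^{8}$ actually gives (after discarding $a^{8}$, which has only exponents $\equiv 0\pmod 8$) is that the number of pairs $(r_{1},r_{2})$ with $r_{1}\equiv r_{2}\equiv 1\pmod 3$ and $r_{1}^{2}+4r_{2}^{2}=n$ is odd. To convert this into the statement about unrestricted pairs of squares $(s_{1},s_{2})$ with $s_{1}+4s_{2}=n$ one needs the bijection argument the paper supplies, which uses $n\equiv -1\pmod 3$ to force $s_{1},s_{2}\equiv 1\pmod 3$ and to choose square roots $\equiv 1\pmod 3$. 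Your proposal never engages with this mod $3$ bookkeeping. On the final step, your assessment of where the difficulty lies is inverted: the passage from "odd number of representations by $x^{2}+4y^{2}$" to "prime times a square" is the routine part (the paper dispatches it as "a little arithmetic in $\newz[i]$"), whereas the operator manipulation you treat as straightforward is where your argument breaks.
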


\begin{proof}
$p_{4,1}\left(\frac{1}{a}\right)=\frac{1}{a^{4}}p_{4,1}(a^{3})=\frac{1}{a^{4}}p_{4,1}(a)p_{4,1}(a^{2})=\frac{1}{a^{4}}\left(a+a^{4}\right)a^{8}=a^{5}+a^{8}$. Since $n$ is in $B(a)$ and is $\equiv 1\pod{4}$, the coefficient of $x^{n}$ in $a^{5}+a^{8}$ is 1, and so the coefficient in $a^{5}=a\cdot a^{4}$ is 1. So the number of pairs $(r_{1},r_{2})$ with $r_{1}\equiv r_{2}\equiv 1\pod{3}$ and $r_{1}^{2}+4r_{2}^{2}=n$ is odd. To each such pair attach the pair $\left(s_{1},s_{2}\right)$ with $s_{1}$ and $s_{2}$ squares, $s_{1}+4s_{2}=n$, by setting $s_{i}=r_{i}^{2}$. The function from pairs $(r_{1},r_{2})$ to pairs $(s_{1},s_{2})$ is 1--1. Since $n$ is in $B(a)$, $n\equiv -1\pod{3}$. So whenever we have a pair $(s_{1},s_{2})$ as above, $s_{1}$ and $s_{2}$ are $\equiv 1\pod{3}$ and have square roots $\equiv 1\pod{3}$.  So the function $(r_{1},r_{2})\rightarrow (s_{1},s_{2})$ is onto, and we get the first assertion of the theorem.  A little arithmetic in $\newz[i]$ gives the second assertion.
\end{proof}

\begin{lemma}
\label{lemma2.6}
If $n\equiv 3\pod{8}$, $n$ is in $B(a)$ if and only if the number of triples $(r_{1},r_{2},r_{3})$ with $r_{1}\equiv r_{2} \equiv r_{3}\equiv 1\pod {3}$ and $r_{1}^{2}+2r_{2}^{2}+8r_{3}^{2}=n$ is odd.
\end{lemma}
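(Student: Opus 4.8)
The plan is to follow the method of Theorems~\ref{theorem2.4} and~\ref{theorem2.5}: compute $p_{8,3}(1/a)$ explicitly and read off the coefficient of $x^n$ for $n\equiv 3\pmod 8$. Writing $1/a=a^7/a^8$ and using that $a^8\in L^{[8]}$ together with the $L^{[8]}$-linearity of $p_{8,3}$, I get $p_{8,3}(1/a)=a^{-8}\,p_{8,3}(a^7)$, so the whole problem reduces to computing $p_{8,3}(a^7)$.

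To do this I would factor $a^7=a\cdot a^2\cdot a^4$ and apply the product formula for $p_{8,j}$, which requires the individual projections of $a$, $a^2$, $a^4$. These are supplied by Lemma~\ref{lemma2.2} and the doubling identity $p_{2q,2j}(F^2)=(p_{q,j}(F))^2$, using that every square is $\equiv 0,1,4\pmod 8$. The outcome I expect is $p_{8,0}(a)=a^{16}$, $p_{8,1}(a)=a+a^4$, $p_{8,4}(a)=a^4+a^{16}$; $p_{8,0}(a^2)=a^8$, $p_{8,2}(a^2)=a^2+a^8$; and $p_{8,0}(a^4)=a^{16}$, $p_{8,4}(a^4)=a^4+a^{16}$, with all remaining projections zero. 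Feeding these into the product formula and keeping only the terms whose index-sum is $\equiv 3\pmod 8$, the sole surviving contribution comes from $(i,j,k)=(1,2,0)$, giving $p_{8,3}(a^7)=(a+a^4)(a^2+a^8)a^{16}$ and hence $p_{8,3}(1/a)=(a+a^4)(a^2+a^8)a^8$.

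The final step is to recognize this as a generating function. Since $a^8=a(x^8)$ lies in class $0\pmod 8$, the displayed expression equals $p_{8,3}(a^8\cdot a\cdot a^2)=p_{8,3}(a^{11})$; thus for every $n\equiv 3\pmod 8$ the coefficient of $x^n$ in $1/a$ equals that in $a^{11}$. Now $a=\sum_{r\equiv 1\pmod 3}x^{r^2}$, so by the Frobenius identities $a^2=\sum_{r\equiv 1\pmod 3}x^{2r^2}$ and $a^8=\sum_{r\equiv 1\pmod 3}x^{8r^2}$; multiplying, $a^{11}=a\cdot a^2\cdot a^8$ is the mod~2 reduction of $\sum x^{r_1^2+2r_2^2+8r_3^2}$ over triples with each $r_i\equiv 1\pmod 3$. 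Its $x^n$-coefficient is therefore the parity of the number of such triples, which gives the lemma.

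The only real obstacle is bookkeeping: correctly pinning down the projections of $a$, $a^2$, $a^4$, and in particular separating the classes $0$ and $4\pmod 8$ (which forces $p_{8,0}(a)=a^{16}$ rather than the coarser $p_{2,0}(a)=a^4$). A cleaner variant avoids listing all of them: factoring $a^7=a^4\cdot a^3$ and noting that no sum of three squares is $\equiv 7\pmod 8$ gives $p_{8,7}(a^3)=0$, so in the product formula only $p_{8,0}(a^4)\,p_{8,3}(a^3)=a^{16}p_{8,3}(a^3)$ survives and one obtains $p_{8,3}(1/a)=a^8p_{8,3}(a^3)=p_{8,3}(a^{11})$ directly.
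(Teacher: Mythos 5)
Your proof is correct and follows essentially the same route as the paper: both reduce to $p_{8,3}\left(\frac{1}{a}\right)=\frac{1}{a^{8}}\,p_{8,1}(a)\,p_{8,2}(a^{2})\,p_{8,0}(a^{4})=(a+a^{4})^{3}a^{8}$ and then identify the part relevant to the class $3 \bmod 8$ with $a^{11}=a\cdot a^{2}\cdot a^{8}$, whose $x^{n}$-coefficient is the parity of the triple count. The only (cosmetic) difference is that the paper expands $(a+a^{4})^{3}a^{8}=a^{11}+a^{14}+a^{17}+a^{20}$ and discards the last three terms by a congruence check, whereas you package the same observation as $p_{8,3}\left(\frac{1}{a}\right)=p_{8,3}\left(a^{11}\right)$.
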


\begin{proof}
$p_{8,3}\left(\frac{1}{a}\right)=\frac{1}{a^{8}}p_{8,3}\left(a\cdot a^{2}\cdot a^{4}\right)=\frac{1}{a^{8}}p_{8,1}(a)p_{8,2}\left(a^{2}\right)p_{8,0}\left(a^{4}\right)=\frac{1}{a^{8}}\left(a+a^{4}\right) \linebreak \left(a+a^{4}\right)^{2}a^{16}=a^{11}+a^{14}+a^{17}+a^{20}$.  Since $n\equiv 3\pod{8}$ the coefficients of $x^{n}$ in $a^{14}$, $a^{20}$, and $a^{17}=a\cdot a^{16}$ are evidently 0. So $n$ is in $B(a)$ if and only if the coefficient of $x^{n}$ in $a^{11}=a\cdot a^{2}\cdot a^{8}$ is 1, giving the lemma.
\end{proof}

\begin{lemma}
\label{lemma2.7}
If $n\equiv 11\pod{24}$ the number of triples $(s_{1},s_{2},s_{3})$ where the $s_{i}$ are squares and $s_{1}+s_{2}+s_{3}=n$ is $3\cdot $(the number of triples $(r_{1},r_{2},r_{3})$ as in Lemma \ref{lemma2.6}).
\end{lemma}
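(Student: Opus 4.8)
The plan is to translate the statement into an identity between representation numbers and prove that by an elementary folding bijection. Writing $n\equiv 11\pmod{24}$ as $n\equiv 3\pmod 8$ together with $n\equiv 2\pmod 3$, I first record two consequences. From $n\equiv 3\pmod 8$ every representation $n=a^2+b^2+c^2$ has $a,b,c$ all odd, and every representation $n=r_1^2+2r_2^2+8r_3^2$ has $r_1,r_2$ odd; from $n\equiv 2\pmod 3$ exactly one of the three roots in any sum of three squares is divisible by $3$. Counting square \emph{values} versus signed roots differs by a factor $2^3$ in each case (all the relevant roots being nonzero), so the asserted equality $N_3(n)=3M(n)$, where $N_3(n)$ is the number of triples of squares and $M(n)$ is the Lemma \ref{lemma2.6} count, is equivalent to the signed identity $r_3(n)=3R(n)$. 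Here $r_3(n)=\#\{(a,b,c)\in\newz^3:a^2+b^2+c^2=n\}$ and $R(n)=\#\{(r_1,r_2,r_3)\in\newz^3:r_1^2+2r_2^2+8r_3^2=n,\ 3\nmid r_1r_2r_3\}$.

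The key algebraic input is the identity $r_1^2+2r_2^2+8r_3^2=r_1^2+(r_2+2r_3)^2+(r_2-2r_3)^2$. I would use it to define, for each position $i\in\{1,2,3\}$, a map $\psi_i$ sending a solution $(r_1,r_2,r_3)$ counted by $R(n)$ to the sum-of-three-squares representation obtained by placing $r_1$ in slot $i$ and $r_2+2r_3$, $r_2-2r_3$ in the other two slots. Inverting $\psi_1$ is exactly the substitution $r_1=a$, $r_2=(b+c)/2$, $r_3=(b-c)/4$, which returns an integral solution precisely when $b\equiv c\pmod 4$, so $\psi_1$ is a bijection from the solutions counted by $R(n)$ onto
$$T_1^{\circ}=\{(a,b,c):a^2+b^2+c^2=n,\ b\equiv c\!\!\pmod 4,\ 3\nmid a\};$$
here the condition $3\nmid r_1r_2r_3$ matches $3\nmid a$ because, using $n\equiv 2\pmod3$, exactly one of $b,c$ is divisible by $3$ and then $r_2,r_3$ are automatically prime to $3$. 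Permuting coordinates gives the analogous bijections onto $T_2^{\circ}$ and $T_3^{\circ}$, so $|T_1^{\circ}|=|T_2^{\circ}|=|T_3^{\circ}|=R(n)$.

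It then remains to prove $|T_1^{\circ}|+|T_2^{\circ}|+|T_3^{\circ}|=r_3(n)$. I would compute the left side as a sum over each representation $(a,b,c)$ of the number of slots $i$ for which the other two coordinates agree $\pmod 4$ and the $i$th coordinate is prime to $3$. Since only the residues $1,3\pmod 4$ occur among the odd $a,b,c$, each representation either has all three coordinates equal $\pmod 4$ (then all three agreement conditions hold) or has exactly two equal (then exactly one holds, for the odd-one-out slot). Combined with the fact that exactly one coordinate is divisible by $3$, a short count shows the total equals $2\cdot(\#\text{all-equal reps})+\#\{\text{two-equal reps whose odd-one-out slot is \emph{not} the multiple of }3\}$. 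Finally, the sign-flip negating the unique coordinate divisible by $3$ is an involution matching the all-equal-$\pmod 4$ representations bijectively with the two-equal representations whose odd-one-out slot \emph{is} the multiple of $3$; feeding this identity in collapses the total to the full count $r_3(n)$, as required.

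The main obstacle is the bookkeeping in this last step: one must correctly correlate the mod $4$ pattern of a representation with the location of its unique multiple of $3$, and verify that the sign-flip involution is exactly what converts the discrepancy term into equality. The earlier steps are routine once the folding identity is in hand; it is the interaction of the mod $4$ and mod $3$ structure, and the essential use of $n\equiv 2\pmod 3$ at two separate points, that require care.
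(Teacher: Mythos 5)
Your proof is correct, and it rests on the same folding identity $2r_{2}^{2}+8r_{3}^{2}=(r_{2}+2r_{3})^{2}+(r_{2}-2r_{3})^{2}$ as the paper's, but the combinatorics is organized genuinely differently. The paper extracts the factor of $3$ from the position of the unique $s_{i}$ divisible by $3$ (reducing to triples with $s_{3}\equiv 0\pmod 3$) and then gives a single bijection with the Lemma \ref{lemma2.6} triples; the delicate point there is the inverse map, which requires choosing square roots of $s_{1},s_{2}$ that are $\equiv 1\pmod 3$ and a square root of $s_{3}$ congruent to $\sqrt{s_{2}}$ mod $4$. You instead pass to signed roots --- legitimate, since $n\equiv 3\pmod 8$ forces all roots odd, hence nonzero, so both counts inflate by exactly $2^{3}$ --- obtain the factor of $3$ from the three choices of slot for the unfolded coordinate $r_{1}$, and cut out the three images by mod-$4$ agreement conditions together with $3\nmid a$. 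The price is the final covering argument, and the step you flag as the main obstacle does go through: a representation with all coordinates congruent mod $4$ contributes $2$ (two of its slots are prime to $3$), a two-congruent representation contributes $1$ or $0$ according as its outlier is or is not prime to $3$, and negating the unique coordinate divisible by $3$ flips that coordinate's class mod $4$, giving the involution that identifies the all-congruent count with the count of two-congruent representations whose outlier is the multiple of $3$; this converts $2A+B_{\mathrm{good}}$ into $A+B_{\mathrm{good}}+B_{\mathrm{bad}}=r_{3}(n)$. Each version buys something: the paper's is shorter once the right normalization is found, while yours avoids choosing distinguished square roots and makes the separate roles of $n\equiv 2\pmod 3$ and $n\equiv 3\pmod 8$ completely explicit, at the cost of the extra bookkeeping.
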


\begin{proof}
If the $s_{i}$ are as above, two of them are $\equiv 1\pod{3}$ while 3 divides the third. So our lemma states that the number of triples $(s_{1},s_{2},s_{3})$ with the $s_{i}$ squares, $s_{1}+s_{2}+s_{3}=n$ and $s_{3}\equiv 0\pod{3}$ is the number of triples $(r_{1},r_{2},r_{3})$ as in Lemma \ref{lemma2.6}.  If we have a triple $(r_{1},r_{2},r_{3})$ let $s_{1}=r_{1}^{2}$, $s_{2}=(r_{2}-2r_{3})^{2}$, $s_{3}=(r_{2}+2r_{3})^{2}$. Then the $s_{i}$ are squares, $s_{3}\equiv 0\pod{3}$ and $s_{1}+s_{2}+s_{3}=r_{1}^{2}+2r_{2}^{2}+8r_{3}^{2}=n$. That $(r_{1},r_{2},r_{3})\rightarrow (s_{1},s_{2},s_{3})$ is 1--1 is easily seen.  To prove ontoness suppose we're given $(s_{1},s_{2},s_{3})$. Then $s_{1}$ and $s_{2}$ are $\equiv 1\pod{3}$ and have square roots, $\sqrt{s_{1}}$ and $\sqrt{s_{2}}$, that are $\equiv 1\pod{3}$. Also, since $n\equiv 3\pod{8}$, the $s_{i}$ are odd. So we can find a square-root, $\sqrt{s_{3}}$ of $s_{3}$ with $\sqrt{s_{3}}\equiv\sqrt{s_{2}}\pod{4}$. Then the triple $\left(\sqrt{s_{1}},\frac{-\sqrt{s_{2}}-\sqrt{s_{3}}}{2},\frac{\sqrt{s_{2}}-\sqrt{s_{3}}}{4}\right)$ has its entries $\equiv 1\pod{3}$ and maps to $(s_{1},s_{2}, s_{3})$.
\end{proof}

\begin{theorem}
\label{theorem2.8}
Suppose $n\equiv 3\pod{8}$ and $n$ is in $B(a)$. Then the number of pairs $(s_{1},s_{2})$ with $s_{1}$ and $s_{2}$ squares and $s_{1}+2s_{2}=n$ is odd. Furthermore, $n$ is the product of a prime and a square.
\end{theorem}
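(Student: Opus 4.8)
The plan is to reduce the first assertion to a statement about sums of three squares via the two preceding lemmas, and then to extract the arithmetic conclusion from the classical representation theory of the form $x^2+2y^2$.

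First I would pin down the residue of $n$. Since $n$ lies in $B(a)=B([1])$, the remark following Definition \ref{def1.2} gives $n\equiv -1\equiv 2\pod{3}$; together with the hypothesis $n\equiv 3\pod{8}$ this forces $n\equiv 11\pod{24}$, so Lemma \ref{lemma2.7} is available. Because $n$ is in $B(a)$ and $n\equiv 3\pod{8}$, Lemma \ref{lemma2.6} tells us that the number of triples $(r_{1},r_{2},r_{3})$ with each $r_{i}\equiv 1\pod{3}$ and $r_{1}^{2}+2r_{2}^{2}+8r_{3}^{2}=n$ is odd. Feeding this into Lemma \ref{lemma2.7}, the number of ordered triples $(s_{1},s_{2},s_{3})$ of squares with $s_{1}+s_{2}+s_{3}=n$ equals $3$ times an odd number, hence is odd. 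For the first assertion I would then use a transposition involution: on the set of such ordered triples the map $(s_{1},s_{2},s_{3})\mapsto (s_{1},s_{3},s_{2})$ is an involution whose $2$-element orbits contribute evenly, so the number of triples is congruent $\bmod\ 2$ to the number of fixed points. A fixed point is precisely a triple with $s_{2}=s_{3}$, i.e.\ a pair $(s_{1},s_{2})$ of squares with $s_{1}+2s_{2}=n$. Since the triple count is odd, this pair count is odd.

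For the second assertion I would pass to $\newz[\sqrt{-2}]$, a principal ideal domain with unit group $\{\pm 1\}$. Since $n\equiv 3\pod{8}$ is odd and $3$ is not a square mod $8$, $n$ is not a perfect square; consequently every representation $a^{2}+2b^{2}=n$ has $a,b>0$, so the pair count $P$ from the first assertion satisfies $r(n)=4P$, where $r(n)=\#\{(x,y)\in\newz^{2}:x^{2}+2y^{2}=n\}$. The class-number-one formula gives $r(n)=2\sum_{d\mid n}\left(\frac{-2}{d}\right)$, whence $P=\tfrac{1}{2}\sum_{d\mid n}\left(\frac{-2}{d}\right)$. Writing $n=\prod_{p}p^{e_{p}}$ and using that $\left(\frac{-2}{\cdot}\right)$ is the character mod $8$ equal to $+1$ on $p\equiv 1,3$ and $-1$ on $p\equiv 5,7$, the divisor sum factors as $\prod_{p}\sigma_{p}$, with $\sigma_{p}=e_{p}+1$ when $\left(\frac{-2}{p}\right)=1$ and $\sigma_{p}\in\{0,1\}$ (according to the parity of $e_{p}$) when $\left(\frac{-2}{p}\right)=-1$. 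Since $P$ is odd, and in particular nonzero, every prime $p\equiv 5,7\pod{8}$ must occur to an even power, and $\prod_{p\equiv 1,3}(e_{p}+1)=2P$ has $2$-adic valuation exactly $1$; hence exactly one prime $p_{0}\equiv 1$ or $3\pod{8}$ occurs to an odd power, and all other primes occur to even powers. Reducing $n$ mod $8$ then shows $n\equiv 3^{E}\pod{8}$ with $E=\sum_{p\equiv 3\ (8)}e_{p}$, so the hypothesis $n\equiv 3\pod{8}$ forces $E$ odd, which locates the unique odd-exponent prime $p_{0}$ in the class $3\bmod 8$. Therefore $n=p_{0}\cdot(\text{square})$.

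I expect the first assertion to be essentially formal once Lemmas \ref{lemma2.6} and \ref{lemma2.7} are in hand, the transposition involution doing all the work. The real content, and the step most prone to error, is the second assertion: one must get the representation-number formula for $x^{2}+2y^{2}$ exactly right (including the unit factor and the reduction to $a,b>0$) and then run the mod-$8$ bookkeeping carefully, since it is the interplay of ``$P$ odd'' (pinning down the $2$-adic valuation of $\prod(e_{p}+1)$) with ``$n\equiv 3\pod{8}$'' (placing the odd-exponent prime in the class $3\bmod 8$) that yields the factorization into a prime times a square.
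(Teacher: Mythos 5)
Your proof is correct and follows the paper's argument exactly for the first assertion: reduce to $n\equiv 11\pod{24}$, apply Lemmas \ref{lemma2.6} and \ref{lemma2.7} to make the three-squares triple count odd, and use the transposition involution whose fixed points are the pairs with $s_{1}+2s_{2}=n$. For the second assertion the paper only says ``a little arithmetic in $\newz[\sqrt{-2}]$,'' and your representation-number computation for $x^{2}+2y^{2}$ (class number one, discriminant $-8$) is a correct and complete way of carrying out exactly that arithmetic, so the approaches coincide.
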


\begin{proof}
Consider the set of triples $(s_{1},s_{2},s_{3})$ where the $s_{i}$ are squares and $s_{1}+s_{2}+s_{3}=n$. Since $n$ is in $B(a)$, and $n\equiv 3\pod{8}$, $n\equiv 11\pod{24}$. Lemmas \ref{lemma2.6} and \ref{lemma2.7} then show that the number of such triples is odd. Now $(s_{1},s_{2},s_{3})\rightarrow(s_{1},s_{3},s_{2})$ is an involution on the set of such triples whose fixed points identify with the pairs $(s_{1},s_{2})$ as in the statement of the theorem. This gives the first assertion of the theorem, and a little arithmetic in $\newz [\sqrt{-2}]$ gives the second.
\end{proof}

\begin{theorem}
\label{theorem2.9}
\hspace{.5em} 
\begin{enumerate}
\item[(1)] Every element $n$ of $B(a)$ that lies in $U^{*}$ is the product of a prime and a square.
\item[(2)] The number of elements of $B(a)$ that are $\le x$ and lie in $U^{*}$ is $O\left(x/\log x\right)$.
\end{enumerate}
\end{theorem}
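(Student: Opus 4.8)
The plan is to deduce both assertions directly from Theorems \ref{theorem2.4}, \ref{theorem2.5}, and \ref{theorem2.8}, together with one elementary counting estimate; no new arithmetic is needed. First I would record the partition of $U^{*}$ noted in Example \ref{example1}: for $n\ge 0$, $U^{*}$ is exactly the set of $n$ with $n\not\equiv 7\pmod{8}$, and this set is the \emph{disjoint} union of the three classes $n\equiv 0\pmod{2}$, $n\equiv 1\pmod{4}$, and $n\equiv 3\pmod{8}$ (in terms of residues mod $8$ these are $\{0,2,4,6\}$, $\{1,5\}$, and $\{3\}$).

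For part (1), I would split an arbitrary $n\in B(a)\cap U^{*}$ according to which of these three classes contains it. If $n\equiv 0\pmod{2}$, Theorem \ref{theorem2.4} gives $n=2s^{2}$ for some integer $s$, which is visibly a prime ($2$) times a square. If $n\equiv 1\pmod{4}$, respectively $n\equiv 3\pmod{8}$, the desired conclusion that $n$ is a product of a prime and a square is precisely the second assertion of Theorem \ref{theorem2.5}, respectively Theorem \ref{theorem2.8}. Since the three classes exhaust $U^{*}$, this settles (1).

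For part (2), by (1) the set $B(a)\cap U^{*}$ is contained in the set of integers of the shape $pm^{2}$ with $p$ prime and $m\ge 1$, so it suffices to bound the number $N(x)$ of such integers that are $\le x$. I would use $N(x)\le\sum_{m\ge 1}\pi(x/m^{2})$ and split the sum at $m=x^{1/4}$. The term $m=1$ contributes $\pi(x)=O(x/\log x)$. For $2\le m\le x^{1/4}$ one has $x/m^{2}\ge\sqrt{x}$, hence $\log(x/m^{2})\ge\frac{1}{2}\log x$, and the Chebyshev bound gives $\pi(x/m^{2})=O\!\left(\frac{x}{m^{2}\log x}\right)$; summing the convergent series $\sum_{m}1/m^{2}$ keeps this $O(x/\log x)$. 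For $m>x^{1/4}$ the trivial bound $\pi(y)\le y$ yields $\sum_{m>x^{1/4}}x/m^{2}=O(x^{3/4})=o(x/\log x)$. Adding the three contributions gives $N(x)=O(x/\log x)$, and hence (2).

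I do not expect a genuine obstacle here: the substantive work (the involution arguments in $\newz[i]$ and $\newz[\sqrt{-2}]$ producing the ``prime times square'' conclusion in each congruence class) has already been done in the earlier theorems, so Theorem \ref{theorem2.9} is essentially a clean corollary. The only two things to be careful about are the bookkeeping showing that the three congruence classes tile $U^{*}$, and the standard sieve-free estimate for the count of prime-times-square integers, which is where the $\log x$ saving in part (2) comes from.
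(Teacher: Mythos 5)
Your proof is correct and follows the paper's own route exactly: decompose $U^{*}$ into the classes $n\equiv 0\pmod{2}$, $n\equiv 1\pmod{4}$, $n\equiv 3\pmod{8}$ and invoke Theorems \ref{theorem2.4}, \ref{theorem2.5} and \ref{theorem2.8} for part (1), with part (2) following from the standard count of integers of the form $pm^{2}$. The paper dismisses part (2) as ``an immediate consequence''; your Chebyshev-plus-$\sum 1/m^{2}$ estimate is precisely the omitted routine detail and is carried out correctly.
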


\begin{proof}
The elements of $U^{*}$ are $\equiv 0\pod{2}$, $1\pod{4}$, or $3\pod{8}$, and we use Theorems \ref{theorem2.4}, \ref{theorem2.5} and \ref{theorem2.8} to get (1). (2) is an immediate consequence.
\end{proof}

\begin{remark}
\label{remark1}
The proof of Theorem \ref{theorem2.9} is easier than that of a similar result in Monsky \cite{2}, which makes use of results of Gauss on representations by sums of 3 squares.
\end{remark}

\begin{remark}
\label{remark2}
The set $B(a+a^{4})$ has been more extensively studied. One sees immediately that $a+a^{4}=\sum x^{1+24s}$, where $s$ runs over the generalized pentagonal numbers ${0,1,2,5,7,12,15,\ldots}$. So the elements of $B(a+a^{4})$ are all $\equiv -1\pod{24}$. The mod 2 reduction of a famous identity of Euler tells us that $24k-1$ is in $B(a+a^{4})$ if and only if the number of partitions, $p(k)$, of $k$ is odd. Large-scale computer calculations suggest very strongly that the $k$ for which $p(k)$ is odd have density 1/2, so that $B(a+a^{4})$ has relative density 1/2 in the congruence class $n\equiv -1\pod{24}$. It's tempting to believe that $B(a)$ also has relative density 1/2 in this congruence class. This would be in line with the (modest) computer calculations that have been made; see our final section.
\end{remark}

\section{Enter modular forms. The quintic theta relations}
\label{section3}

In the proofs of section \ref{section2} we expressed $p_{2,0}\left(\frac{1}{a}\right)$, $p_{4,1}\left(\frac{1}{a}\right)$, and $p_{8,3}\left(\frac{1}{a}\right)$ as elements of $\newz/2[a]$, and were able to deduce that $B(a)$ has density 0 in the congruence classes $n\equiv 0\pod{2}$, $n\equiv 1\pod{4}$ and $n\equiv 3\pod{8}$. (Note that $p_{8,7}\left(\frac{1}{a}\right)$ is not in $\newz/2[a]$.  Indeed $p_{8,7}\left(\frac{1}{a}\right)=x^{-1}+\cdots $ and is not even in $\newz/2[[x]]$). In our treatment of larger $l$ we'll use a similar idea, but in most cases we'll have to rely on a deep result on modular forms due to Deligne and Serre. My thanks go to David Rohrlich for telling me about this result.

The following is well-known; for a more general theorem on definite quadratic forms in an even number of variables see Sch\"{o}neberg \cite{4}.

\begin{theorem}
\label{theorem3.1}
$\sum\sum\conste^{2\pi i(m^{2}+n^{2})z}$, the sum extending over all pairs $(m,n)$ with $m$ and $n$ in $\newz$ and $n\equiv\mbox{ some } j \mod l$, converges in $\operatorname{Im}(z)>0$ to a weight 1 modular form for a congruence group.
\end{theorem}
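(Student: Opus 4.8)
The plan is to exhibit the series as a theta series attached to the positive-definite binary quadratic form $m^2+n^2$, restricted to a congruence condition on $n$, and to invoke the general transformation theory of such theta series. First I would recall the classical fact that the unrestricted sum $\theta(z) = \sum_{m,n}\conste^{2\pi\consti(m^2+n^2)z} = \left(\sum_m \conste^{2\pi\consti m^2 z}\right)^2$ is the square of the Jacobi theta function $\vartheta_3(z)=\sum_m q^{m^2}$ (with $q=\conste^{2\pi\consti z}$), and is a weight $1$ modular form for $\Gamma_0(4)$. The restriction $n\equiv j\pmod l$ is what must be accommodated, and this is where the congruence subgroup enters.

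The key device is to realize the congruence condition on $n$ by averaging over additive characters modulo $l$. Writing the restriction $\mathbf{1}_{n\equiv j}$ as $\frac{1}{l}\sum_{t=0}^{l-1}\conste^{2\pi\consti t(n-j)/l}$, the restricted sum becomes a finite linear combination $\frac{1}{l}\sum_{t}\conste^{-2\pi\consti tj/l}\,\Theta_t(z)$, where
\[
\Theta_t(z)=\sum_{m,n}\conste^{2\pi\consti t n/l}\,\conste^{2\pi\consti(m^2+n^2)z}
= \vartheta_3(z)\cdot\vartheta(z;t/l),
\]
a product of the ordinary theta function in the $m$-variable and a theta function with a rational characteristic $t/l$ in the $n$-variable. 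Each such theta series with a rational additive twist is itself a modular form of weight $1$ for a congruence subgroup; the twist $\conste^{2\pi\consti tn/l}$ forces the level to acquire factors of $l$ (and, because the form lives on a lattice of level $4$, factors of $4$), so one lands in $\Gamma_1(N)$ with $N$ divisible by $4l^2$ or a similar modulus. A finite $\newc$-linear combination of weight $1$ modular forms for a fixed congruence subgroup is again such a form, so the restricted sum is a weight $1$ modular form for a congruence group, as claimed.

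The main obstacle, and the only point requiring care, is the transformation behavior of the characteristic theta functions $\vartheta(z;t/l)$ under $z\mapsto -1/z$: the additive characteristic is interchanged with a multiplicative (theta-with-argument) shift by Poisson summation, so one must verify that the full collection $\{\Theta_t\}_{0\le t<l}$ spans a space stable under the relevant generators of $\Gamma$ and transforms with a consistent weight-$1$ automorphy factor. Rather than grind through the Poisson summation bookkeeping, I would simply cite the general transformation theorem for theta series of positive-definite even quadratic forms with spherical harmonic and characteristic data---precisely the source Sch\"{o}neberg \cite{4} referenced in the text---which packages exactly this computation and yields weight $1$ (half the number of variables) and a congruence-group level determined by the form and the characteristic. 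Convergence in $\operatorname{Im}(z)>0$ is immediate since the exponents $m^2+n^2$ are nonnegative and the twisting factors have absolute value $1$, so the series converges absolutely and locally uniformly there.
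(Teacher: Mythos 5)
Your proposal is correct and takes essentially the same route as the paper, which offers no argument of its own beyond declaring the statement well-known and citing Sch\"{o}neberg \cite{4} for the general transformation theory of theta series attached to definite quadratic forms. Your character-averaging reduction to twisted theta functions with rational characteristics is a valid (and standard) way of placing the restricted sum within the scope of that reference, so the two approaches coincide in substance.
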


\begin{corollary}
\label{corollary3.2}
Fix $l$. Let $u=\sum a_{s}x^{s}$ be a product of powers of various $[j]$. Then there are integers $c_{0}, c_{1},\ldots$ such that:
\begin{enumerate}
\item[(A)] $\sum c_{n}\conste^{2\pi i nz}$ converges in $\operatorname{Im}(z)>0$ to a modular form of integral weight for a congruence group.
\item[(B)] The mod 2 reduction of $c_{s}$ is $a_{s}$.
\end{enumerate}
\end{corollary}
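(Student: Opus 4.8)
\begin{proofidea*}
The plan is to lift each factor $[j]$ to the weight-$1$ modular form furnished by Theorem \ref{theorem3.1}, take the corresponding product, and exploit the fact that reduction mod $2$ is a ring homomorphism. Concretely, writing $q=\conste^{2\pi i z}$, set $\theta=\sum_{m\in\newz}q^{m^{2}}$ and, for each residue $j$, $\Theta_{j}=\sum_{n\equiv j\ (l)}q^{n^{2}}$. Then the double sum appearing in Theorem \ref{theorem3.1} factors as $\theta\,\Theta_{j}$, so each $G_{j}:=\theta\,\Theta_{j}$ is a weight-$1$ modular form for a congruence group (depending only on $l$), and its $q$-expansion has non-negative integer coefficients, being a count of representations $m^{2}+n^{2}=N$ with $n\equiv j\pmod l$.
\end{proofidea*}

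Next I would compute the mod $2$ reduction of these building blocks. Since $\theta=1+2\sum_{m\ge 1}q^{m^{2}}$, its reduction is $1$. The coefficient of $q^{N}$ in $\Theta_{j}$ is the number of $n\equiv j\pmod l$ with $n^{2}=N$, whose parity is, by Definition \ref{def1.2}, exactly the coefficient of $x^{N}$ in $[j]$; hence under the identification $q\leftrightarrow x$ the reduction of $\Theta_{j}$ is $[j]$. Because reduction mod $2$ is multiplicative, the reduction of $G_{j}=\theta\,\Theta_{j}$ is $1\cdot[j]=[j]$, recovering the fact (used in the introduction) that $[j]$ is the reduction of a weight-$1$ modular form.

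Finally, I would write the given $u$ as $\prod_{j}[j]^{e_{j}}$ and set $F=\prod_{j}G_{j}^{e_{j}}$, with $q$-expansion $F=\sum c_{n}q^{n}$. Each $c_{n}$ is an integer; $F$ has integral weight $\sum_{j}e_{j}$; and $F$ is a (holomorphic) modular form for the intersection of the finitely many relevant congruence groups, which is again a congruence group. Identifying $q$ with $\conste^{2\pi i z}$ then gives (A). Applying the reduction homomorphism to $F$ yields $\prod_{j}[j]^{e_{j}}=u=\sum a_{s}x^{s}$, so $c_{s}\equiv a_{s}\pmod 2$, which is (B).

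The only points needing care are routine verifications: that $\theta\,\Theta_{j}$ is precisely the series of Theorem \ref{theorem3.1}, that a product of holomorphic modular forms for congruence groups is again one (closure under multiplication, together with the fact that an intersection of congruence subgroups is a congruence subgroup), and that reduction mod $2$ commutes with the formal product of $q$-series. No input deeper than Theorem \ref{theorem3.1} is required; the substantive work of the paper lies not here but in the later appeal to Deligne--Serre.
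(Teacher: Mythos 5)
Your proposal is correct and follows essentially the same route as the paper: lift each $[j]$ to the weight-1 form of Theorem \ref{theorem3.1}, reduce to the single-factor case by multiplicativity of mod 2 reduction, and observe that the resulting coefficients reduce to those of $[j]$. Your factorization $\theta\,\Theta_{j}$ with $\theta\equiv 1\pmod 2$ is just a repackaging of the paper's involution $(m,n)\mapsto(-m,n)$ on representations $m^{2}+n^{2}=s$, so the two arguments coincide in substance.
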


\begin{proof}
It's enough to show this when $u=[j]$. We take our modular form to be that of Theorem \ref{theorem3.1}. If we write this form as $\sum c_{s}\conste^{2\pi i sz}$, then (A) is satisfied. Furthermore $c_{s}$ is the number of pairs $(m,n)$ with $n\equiv j\pod{l}$ and $m^{2}+n^{2}=s$. $(m,n)\rightarrow (-m,n)$ is an involution on this set of pairs. There is one fixed point if $s$ is the square of some $n\equiv j\pod{l}$, and no fixed point otherwise. It follows that the mod 2 reduction of $c_{s}$ is $a_{s}$.
\end{proof}

Now fix $l$. Recall that $S$ is the subring of $\newz/2[[x]]$ generated over $\newz/2$ by all the $[j]$.

\begin{theorem}
\label{theorem3.3}
If $u=\sum a_{n}x^{n}$ is in $S$, then the set of $n$ for which $a_{n}$ is 1 has density 0.
\end{theorem}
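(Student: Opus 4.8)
The plan is to show that any $u$ in $S$ is the mod 2 reduction of a \emph{single} holomorphic modular form of positive integral weight, and then to invoke Serre's theorem on the divisibility of the Fourier coefficients of such a form.

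First I would write $u$ as a finite $\newz/2$-linear combination of monomials $\mu=\prod_{j}[j]^{e_{j}}$; in characteristic $2$ this is just a sum of distinct such monomials, say $u=\sum_{\mu}\mu$. Corollary \ref{corollary3.2} already realizes each individual $\mu$ as the reduction of a modular form, but these forms have weights equal to the total degrees $d_{\mu}=\sum_{j}e_{j}$, which in general differ, and forms of different weights cannot be added. To remedy this I would equalize weights using $[0]$. By Theorem \ref{theorem3.1} the series $[0]$ lifts to a weight $1$ theta form $\Theta_{0}$, and (as in the proof of Corollary \ref{corollary3.2}) the reduction of $\Theta_{0}$ is $[0]=1$. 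Hence, writing $\Theta_{j}$ for the weight $1$ lift of $[j]$ and setting $d=\max_{\mu}d_{\mu}$, the form $\Theta_{0}^{\,d-d_{\mu}}\prod_{j}\Theta_{j}^{e_{j}}$ has weight $d$ and still reduces to $\mu$. Summing these over $\mu$ produces a single holomorphic modular form $F=\sum c_{n}\conste^{2\pi\consti nz}$, of weight $d$ for a congruence group and with rational integer coefficients, whose mod $2$ reduction is $u$; that is, $c_{n}\equiv a_{n}\pmod{2}$ for all $n$. (If $u$ is constant the assertion is immediate, so I may assume $d\ge 1$.)

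The second and decisive step is to apply Serre's theorem on the lacunarity mod $p$ of the coefficients of a modular form: for a holomorphic modular form of integral weight $\ge 1$ on a congruence group with rational integer coefficients $c_{n}$, the number of $n\le x$ with $c_{n}$ odd is $O\!\left(x/(\log x)^{\alpha}\right)$ for some $\alpha>0$, so in particular the set of such $n$ has density $0$. Since $a_{n}=1$ precisely when $c_{n}$ is odd, the set of $n$ with $a_{n}=1$ has density $0$, which is what we want.

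The genuinely hard input is Serre's divisibility theorem; the rest is bookkeeping. The one point that deserves care is that the forms arising here are theta products and so are generally \emph{not} cusp forms (for instance $[0]$ lifts to a form with reduction $1$), so I must invoke the version of Serre's theorem valid for the full space of modular forms, Eisenstein contributions included, rather than merely for cusp forms.
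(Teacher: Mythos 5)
Your proposal is correct and follows essentially the same route as the paper: realize elements of $S$ as mod $2$ reductions of integral-weight modular forms via the theta lifts of Corollary \ref{corollary3.2}, and then invoke Serre's divisibility theorem \cite{5}. The only difference is bookkeeping --- the paper reduces to the case of a single monomial in the $[j]$ (a finite union of density-zero sets being density zero), whereas you equalize weights by padding with powers of the weight-one lift of $[0]$ so as to apply Serre's theorem to one form; both work, and your remark that one needs the version of Serre's theorem valid beyond cusp forms is well taken, since the theta products occurring here are indeed not cuspidal.
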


\begin{proof}
We may assume that $u$ is a product of powers of various $[j]$. As we've seen, there are $c_{n}$ in $\newz$, with $c_{n}$ reducing to $a_{n}$ mod 2, such that $\sum c_{n}\conste^{2\pi i nz}$ converges in $\operatorname{Im}(z)>0$ to a modular form of integral weight for a congruence group. A theorem of Serre \cite{5}, based on results of Deligne attaching Galois representations to Hecke eigenforms, shows that the $n$ for which 2 does not divide $c_{n}$ form a set of density 0.
\end{proof}

\begin{corollary}
\label{corollary3.4}
Suppose that $p_{q,j}\left(1/[r]\right)$ is in $S$, or more generally is in $p_{q,j}(S)$. Then $B([r])$ has relative density 0 in the congruence class $j\mod q$.
\end{corollary}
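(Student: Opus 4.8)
The plan is to reduce everything to Theorem \ref{theorem3.3} by recognizing the portion of $B([r])$ lying in the class $j\bmod q$ as the support of a single power series in $S$, to which that theorem applies.

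First I would record the defining feature of the projection $p_{q,j}$. Since $p_{q,j}$ is the $L^{[q]}$-linear projection of $L$ onto the summand $x^{j}L^{[q]}$, it acts on any $F=\sum_{n}b_{n}x^{n}$ in $L$ by retaining exactly the terms $b_{n}x^{n}$ with $n\equiv j\pod{q}$ and discarding the rest. Applying this to $F=1/[r]$, the coefficient of $x^{n}$ in $p_{q,j}(1/[r])$ coincides with the coefficient of $x^{n}$ in $1/[r]$ for every $n\equiv j\pod{q}$. Recalling Definition \ref{def1.1}, this gives
\[
B([r])\cap\{\,n\equiv j\pod{q}\,\}=\{\,n\equiv j\pod{q}:\text{the }x^{n}\text{-coefficient of }p_{q,j}(1/[r])\text{ is }1\,\}.
\]

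Next I would invoke Theorem \ref{theorem3.3}. Under the hypothesis $p_{q,j}(1/[r])\in S$, the theorem applies directly with $u=p_{q,j}(1/[r])$: the set of $n$ whose coefficient in $u$ is $1$ has density $0$, hence so does its subset $B([r])\cap\{\,n\equiv j\pod{q}\,\}$. For the more general hypothesis $p_{q,j}(1/[r])\in p_{q,j}(S)$, I would pick a witness $v\in S$ with $p_{q,j}(v)=p_{q,j}(1/[r])$. Then $v$ and $1/[r]$ have identical coefficients on exponents $\equiv j\pod{q}$, so the displayed set is contained in the support of $v$; applying Theorem \ref{theorem3.3} to $v$ shows that support has density $0$. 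Either way, $B([r])\cap\{\,n\equiv j\pod{q}\,\}$ has absolute density $0$ in the integers. Finally I would pass to relative density: the class $n\equiv j\pod{q}$ contains $\sim x/q$ of the nonnegative integers up to $x$, so dividing the $o(x)$ counting function of $B([r])\cap\{\,n\equiv j\pod{q}\,\}$ by this positive-density count yields relative density $0$, as required.

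I do not expect a genuine obstacle: all the depth resides in Theorem \ref{theorem3.3} and, behind it, the Deligne--Serre reduction. The one point meriting care is the generalization to $p_{q,j}(S)$, where one must apply Theorem \ref{theorem3.3} to the witness $v\in S$ rather than to $p_{q,j}(1/[r])$ itself (which need not lie in $S$), using only that the two series agree on the progression $j\bmod q$.
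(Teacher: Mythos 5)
Your proposal is correct and follows exactly the route the paper intends: the corollary is stated without an explicit proof precisely because it is the immediate consequence of Theorem \ref{theorem3.3} that you describe, with the only point of care being the one you flag --- in the $p_{q,j}(S)$ case one applies Theorem \ref{theorem3.3} to a witness $v\in S$ agreeing with $1/[r]$ on the progression $j\bmod q$. Nothing further is needed.
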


Now $p_{q,j}\left(1/[r]\right)=\left(1/[r]^{q}\right)p_{q,j}\left([r]^{q-1}\right)$. But to show that this quotient lies in $p_{q,j}(S)$ for various choices of $j$ and $q$ seems very difficult. There is however a technique for showing that a quotient of two elements of $S$ lies in $S$ that makes use of certain ``quintic theta relations''.

\begin{lemma}
\label{lemma3.5}
$p_{2,0}\left([2i][2j]\right)=[i+j]^{2}[i-j]^{2}$.
\end{lemma}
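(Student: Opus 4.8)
The plan is to expand each side as an explicit $\newz/2$-sum of monomials indexed by pairs of integers, and then to match the two index sets by a linear change of variables. First I would expand the left-hand side. By Definition \ref{def1.2}, $[2i][2j]=\sum x^{m^{2}+n^{2}}$, the sum extending over all $(m,n)$ with $m\equiv 2i$ and $n\equiv 2j\pod{l}$. Applying $p_{2,0}$ retains exactly those terms whose exponent $m^{2}+n^{2}$ is even, i.e.\ those pairs with $m\equiv n\pod{2}$. So $p_{2,0}([2i][2j])$ is the sum of $x^{m^{2}+n^{2}}$ over all $(m,n)$ with $m\equiv 2i$, $n\equiv 2j\pod{l}$, and $m\equiv n\pod{2}$.

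Next I would expand the right-hand side. Since squaring is additive on exponents over $\newz/2$, we have $[i+j]^{2}=\sum_{a\equiv i+j}x^{2a^{2}}$ and $[i-j]^{2}=\sum_{b\equiv i-j}x^{2b^{2}}$, so $[i+j]^{2}[i-j]^{2}=\sum x^{2a^{2}+2b^{2}}$, the sum extending over $(a,b)$ with $a\equiv i+j$ and $b\equiv i-j\pod{l}$. The heart of the argument is then the substitution $m=a+b$, $n=a-b$. It preserves exponents, since $m^{2}+n^{2}=(a+b)^{2}+(a-b)^{2}=2a^{2}+2b^{2}$, and it carries the right index set into the left one: $m$ and $n$ automatically share a parity, while $m\equiv(i+j)+(i-j)=2i$ and $n\equiv(i+j)-(i-j)=2j\pod{l}$. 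The inverse map is $a=(m+n)/2$, $b=(m-n)/2$, whose entries are integers precisely because $m\equiv n\pod{2}$—exactly the condition surviving $p_{2,0}$. Since $l$ is odd, $2$ is invertible mod $l$, so $2a\equiv 2(i+j)$ and $2b\equiv 2(i-j)\pod{l}$ recover $a\equiv i+j$ and $b\equiv i-j\pod{l}$. Hence $(a,b)\mapsto(a+b,a-b)$ is a bijection between the two index sets that preserves the exponent of $x$, and the two sides agree term by term.

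The main obstacle—really the only point needing care—is checking that the parity condition cut out by $p_{2,0}$ corresponds exactly to the integrality of $(m+n)/2$ and $(m-n)/2$, and that the mod-$l$ congruences transfer correctly in both directions; the invertibility of $2$ modulo the odd number $l$ is what makes the correspondence exact each way. Because this is a genuine bijection of index sets, the representation counts agree on the nose, and in particular agree modulo $2$, which is all that is required for the identity in $\newz/2[[x]]$.
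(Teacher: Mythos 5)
Your proof is correct and is essentially the paper's own argument: the paper compares coefficients of $x^{2n}$ on both sides and exhibits the bijection $(r,s)\mapsto\left(\frac{r+s}{2},\frac{r-s}{2}\right)$, which is exactly the inverse of your substitution $(a,b)\mapsto(a+b,a-b)$. Your additional remarks on the parity condition and the invertibility of $2$ mod the odd number $l$ just make explicit the checks the paper leaves to the reader.
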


\begin{proof}
It suffices to show that the coefficients of $x^{2n}$ on the two sides are equal. On the left one has the mod 2 reduction of the number of pairs $(r,s)$ with $r\equiv 2i\pod{l}$, $s\equiv 2j\pod{l}$ and $r^{2}+s^{2}=2n$. On the right one has the mod 2 reduction of the number of pairs $(t,u)$ with $t\equiv i+j\pod{l}$, $u\equiv i-j\pod{l}$ and $t^{2}+u^{2}=n$. Clearly $(r,s)\rightarrow \left(\frac{r+s}{2},\frac{r-s}{2}\right)$ gives the desired bijection.
\end{proof}

\begin{theorem}
\label{theorem3.6}
$[i]^{4}[2j]+[j]^{4}[2i]+[2i][2j]+[i+j]^{2}[i-j]^{2}=0$.
\end{theorem}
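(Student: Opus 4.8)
The plan is to compute the product $[2i][2j]$ by splitting it according to the direct sum decomposition $L = L^{[2]} \oplus xL^{[2]}$ into its even part $p_{2,0}([2i][2j])$ and its odd part $p_{2,1}([2i][2j])$, and to recognize each piece in terms of the theta series appearing in the claimed identity. Since $[2i][2j] = p_{2,0}([2i][2j]) + p_{2,1}([2i][2j])$, once both projections are identified the theorem follows by rearranging in characteristic 2.

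First I would handle the even part. This is exactly Lemma \ref{lemma3.5}, which gives $p_{2,0}([2i][2j]) = [i+j]^2[i-j]^2$, accounting for the last term of the asserted identity. Next I would compute the odd part using the multiplicativity rule for $p_{2,1}$ recorded just after Definition \ref{def2.1}, which for $q=2$, $j=1$ reads $p_{2,1}(FG) = p_{2,0}(F)\,p_{2,1}(G) + p_{2,1}(F)\,p_{2,0}(G)$; hence $p_{2,1}([2i][2j]) = p_{2,0}([2i])\,p_{2,1}([2j]) + p_{2,1}([2i])\,p_{2,0}([2j])$. Here Lemma \ref{lemma2.2}(1) supplies $p_{2,0}([2i]) = [i]^4$ and $p_{2,0}([2j]) = [j]^4$, and since $[2i] = p_{2,0}([2i]) + p_{2,1}([2i])$ it follows that $p_{2,1}([2i]) = [2i] + [i]^4$, and likewise $p_{2,1}([2j]) = [2j] + [j]^4$. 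Substituting these four expressions and expanding, the two $[i]^4[j]^4$ cross terms cancel in characteristic 2, leaving $p_{2,1}([2i][2j]) = [i]^4[2j] + [j]^4[2i]$. Adding the even and odd parts and moving everything to one side then gives the theorem.

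The computation is essentially mechanical once the decomposition is set up; the only place requiring care is the characteristic-2 bookkeeping in the odd part, where one must confirm that the $[i]^4[j]^4$ terms are the only ones cancelling and that nothing from the even part is double-counted. An alternative, more laborious route would be to verify the identity coefficient by coefficient: the coefficient of $x^N$ in each monomial counts integer points on a quadric subject to congruence constraints mod $l$ together with parity constraints, and one would pair up the contributions via explicit involutions (as in the proof of Lemma \ref{lemma3.5}, where $(r,s)\mapsto\bigl(\tfrac{r+s}{2},\tfrac{r-s}{2}\bigr)$ does the work). I expect the projection argument above to be both shorter and more transparent, so that is the route I would carry out.
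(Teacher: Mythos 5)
Your proof is correct and is essentially the paper's own argument: both rest on Lemma \ref{lemma3.5} together with the product rule for the projections and the identity $p_{2,0}([2i])=[i]^{4}$ from Lemma \ref{lemma2.2}. The only cosmetic difference is that the paper equates two computations of $p_{2,0}([2i][2j])$ while you sum the even and odd parts of $[2i][2j]$; since $p_{2,1}(F)=F+p_{2,0}(F)$ in characteristic 2, this is the same algebra.
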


\begin{proof}
$p_{2,0}\left([2i][2j]\right)=p_{2,0}([2i])p_{2,0}([2j])+p_{2,1}([2i])p_{2,1}([2j])=[i]^{4}[j]^{4}+\linebreak \left([i]^{4}+[2i]\right)\left([j]^{4}+[2j]\right)$. Now use Lemma \ref{lemma3.5}.
\end{proof}

Let $x_{1},\ldots ,x_{m}$ (where $l=2m+1$) be indeterminates over $\newz/2$.

\begin{definition}
\label{def3.7}
If $r$ is prime to $l$, $\phi_{r}$ is the homomorphism $\newz/2[x_{1},\ldots , x_{m}]\rightarrow S$ taking $x_{k}$ to $[rk]$.
\end{definition}

Note that each $\phi_{r}$ is onto. We'll use Theorem \ref{theorem3.6} to construct $\frac{m(m-1)}{2}$ elements of $\newz/2[x_{1},\ldots , x_{m}]$ lying in the kernel of each $\phi_{r}$.

\begin{theorem}
\label{theorem3.8}
Suppose that $m\ge i > j \ge 1$. For $1\le k \le m$ define $x_{l-k}$ to be $x_{k}$, so that we have elements $x_{1},\ldots , x_{2m}$ of $\newz/2[x_{1},\ldots , x_{m}]$. Then if we define $R_{i,j}$ to be $x_{i}^{4}x_{2j}+x_{j}^{4}x_{2i}+x_{2i}x_{2j}+x_{i+j}^{2}x_{i-j}^{2}$, each $R_{i,j}$ is in the kernel of each $\phi_{r}$.
\end{theorem}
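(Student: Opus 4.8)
The plan is to prove directly that $\phi_r(R_{i,j})=0$, reducing the whole statement to the quintic theta relation of Theorem \ref{theorem3.6} via the substitution $(i,j)\mapsto(ri,rj)$.

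First I would pin down how $\phi_r$ acts on the \emph{enlarged} family of indeterminates $x_1,\ldots,x_{2m}$. By definition $\phi_r(x_k)=[rk]$ for $1\le k\le m$. For an index $a$ with $m<a\le 2m$ the convention $x_a=x_{l-a}$ applies (it is legitimate since then $1\le l-a\le m$), so $\phi_r(x_a)=\phi_r(x_{l-a})=[r(l-a)]$. Using the periodicity $[c]=[c+l]$ and the evenness $[c]=[-c]$ of the theta series recorded in Definition \ref{def1.2}, we get $[r(l-a)]=[rl-ra]=[-ra]=[ra]$. Hence $\phi_r(x_a)=[ra]$ for \emph{every} $a\in\{1,\ldots,2m\}$, so I may freely substitute $\phi_r(x_a)=[ra]$ for each subscript occurring below.

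Next I would check that every subscript appearing in $R_{i,j}$ really does lie in the range $\{1,\ldots,2m\}=\{1,\ldots,l-1\}$ in which the previous identification is valid. Since $1\le j<i\le m$, one has $1\le i-j\le m-1$, $\ 2\le 2j\le 2m-2$, $\ 2\le 2i\le 2m$, and $3\le i+j\le 2m-1$; all four are $\le 2m=l-1$, so each is a legitimate (possibly folded) index and the first paragraph applies to it. Applying $\phi_r$ termwise then gives
\[
\phi_r(R_{i,j})=[ri]^{4}[2rj]+[rj]^{4}[2ri]+[2ri][2rj]+[r(i+j)]^{2}[r(i-j)]^{2}.
\]
Recognizing that $[2rj]=[r\cdot 2j]$, $[2ri]=[r\cdot 2i]$, $[ri+rj]=[r(i+j)]$ and $[ri-rj]=[r(i-j)]$, this is exactly the left-hand side of the identity in Theorem \ref{theorem3.6} with the pair $(i,j)$ there replaced by $(ri,rj)$. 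Since that identity holds for all integer arguments, the expression vanishes, so $\phi_r(R_{i,j})=0$, i.e.\ $R_{i,j}\in\ker\phi_r$, for every $r$ prime to $l$.

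The argument is short precisely because all the real content sits in Theorem \ref{theorem3.6}; the only delicate point is the index bookkeeping in the first two paragraphs, where one must confirm that the folding convention $x_{l-k}=x_k$ is compatible with $\phi_r$ (this is exactly the evenness $[-c]=[c]$) and that the subscripts $2i$, $2j$, $i+j$ never escape the range $\{1,\ldots,l-1\}$ on which the folding is defined. Once those checks are in place the conclusion is immediate. Finally, as $(i,j)$ ranges over the pairs with $m\ge i>j\ge 1$ there are $\binom{m}{2}=\tfrac{m(m-1)}{2}$ such relations $R_{i,j}$, matching the count asserted in the discussion preceding the theorem.
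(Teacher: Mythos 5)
Your proof is correct and follows the same route as the paper: establish that $\phi_r(x_k)=[rk]$ for all $k=1,\ldots,2m$ via the folding convention and the identity $[c]=[-c]=[c+l]$, then apply Theorem \ref{theorem3.6} with $(i,j)$ replaced by $(ri,rj)$. Your version simply spells out the index bookkeeping that the paper leaves implicit.
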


\begin{proof}
The definition of $x_{m+1},\ldots , x_{2m}$ shows that $\phi_{r}(x_{k})=[rk]$ for $k=1,\ldots , 2m$. The result now follows from Theorem \ref{theorem3.6} on replacing $i$ and $j$ by $ri$ and $rj$ throughout.
\end{proof}

\begin{theorem}
\label{theorem3.9}
Let $u$ and $v$ be elements of $\newz/2[x_{1},\ldots ,x_{m}]$, and $N$ the ideal in this ring generated by the $R_{i,j}$. Suppose that the ideals $(N,v)$ and $(N,u,v)$ are the same. Then the element $\phi_{r}(u)/\phi_{r}(v)$ of the field of fractions of $S$ in fact lies in $S$.
\end{theorem}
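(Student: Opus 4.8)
The plan is to convert the ideal-theoretic hypothesis into a single explicit membership relation and then push it through the homomorphism $\phi_r$, using Theorem~\ref{theorem3.8} to annihilate the contribution of $N$. First I would observe that, since $(N,u,v)\supseteq(N,v)$ automatically, the assumed equality $(N,v)=(N,u,v)$ is equivalent to the statement $u\in(N,v)$. Unwinding this, there exist an element $w\in N$ and a polynomial $h\in\newz/2[x_{1},\ldots,x_{m}]$ with $u=hv+w$.

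Next I would apply $\phi_r$ to this identity in $\newz/2[x_{1},\ldots,x_{m}]$. By Theorem~\ref{theorem3.8} each generator $R_{i,j}$ of $N$ lies in $\ker\phi_r$, so the whole ideal $N$ is contained in $\ker\phi_r$ and in particular $\phi_r(w)=0$. Applying the ring homomorphism $\phi_r$ therefore yields $\phi_r(u)=\phi_r(h)\,\phi_r(v)$ as an equality in $S$.

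Finally, to interpret the conclusion I would note that the quotient $\phi_r(u)/\phi_r(v)$ is by hypothesis an element of the field of fractions of $S$, which already presupposes $\phi_r(v)\ne 0$. Since $S$ is a subring of the integral domain $\newz/2[[x]]$, it is itself an integral domain, so cancellation is legitimate: from $\phi_r(u)=\phi_r(h)\,\phi_r(v)$ I may divide by $\phi_r(v)$ to get $\phi_r(u)/\phi_r(v)=\phi_r(h)$. As $\phi_r(h)$ lies in the image $S$ of $\phi_r$, this establishes the theorem.

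In this particular statement there is no genuine obstacle in the argument itself once the relations $R_{i,j}$ are known to lie in every $\ker\phi_r$; the proof is a formal consequence of Theorem~\ref{theorem3.8} together with the fact that $S$ is a domain. The real difficulty, which this theorem only sets up and which is deferred to the applications, is verifying the ideal equality $(N,v)=(N,u,v)$ for the specific $u$ and $v$ that arise when one takes $\phi_r(v)=[r]^{q}$ and $\phi_r(u)=p_{q,j}([r]^{q-1})$; it is precisely that membership test, carried out in $\newz/2[x_{1},\ldots,x_{m}]/N$, where the extensive computer calculations mentioned in the introduction are needed.
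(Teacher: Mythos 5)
Your proposal is correct and is essentially the paper's own argument: the paper likewise deduces $u\in(N,v)$, applies $\phi_{r}$ using Theorem~\ref{theorem3.8} to kill the $N$-part, and concludes that $\phi_{r}(u)$ lies in the principal ideal generated by $\phi_{r}(v)$. Your added remarks on the equivalence with $u\in(N,v)$ and on cancellation in the domain $S$ only make explicit what the paper leaves implicit.
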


\begin{proof}
$u$ is in $(N,v)$. Applying $\phi_{r}$ and using Theorem \ref{theorem3.9} we find that in $S$, $\phi_{r}(u)$ lies in the principal ideal $\phi_{r}(v)$.
\end{proof}

\begin{remark*}
Commutative algebra computer programs such as Macaulay 2 use Gr\"{o}bner bases to decide whether 2 ideals in a polynomial ring are equal. We shall use such a program to show that in many cases of interest the quotient $\phi_{r}(u)/\phi_{r}(v)$ lies in $S$.
\end{remark*}

There is one further simple result that we'll use frequently in the calculations to follow.

\begin{lemma}
\label{lemma3.10}
Suppose that for some $a$ and $b$, $p_{2,0}(a)=b^{4}$. Then:
\begin{enumerate}
\item[(1)] $p_{2,0}\left(\frac{1}{a}\right)=\frac{b^{4}}{a^{2}}$
\item[(2)] $p_{4,0}\left(\frac{1}{a}\right)=\frac{b^{12}}{a^{4}}$
\item[(3)] $p_{8,0}\left(\frac{1}{a}\right)=\frac{b^{8}}{a^{8}}\left(p_{2,0}(ab)\right)^{4}$
\end{enumerate}
\end{lemma}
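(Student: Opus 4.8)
The plan is to exploit three facts about the projection operators: that each $p_{q,0}$ is $L^{[q]}$-linear (so any $q$th power may be pulled outside it), the squaring rule $p_{2q,2j}(F^{2})=(p_{q,j}(F))^{2}$, and the nesting identity $p_{2q,0}=p_{2q,0}\circ p_{q,0}$, which is immediate from the direct-sum decomposition since the terms $x^{n}$ with $n\equiv 0\pmod{2q}$ all survive $p_{q,0}$. The engine is to compute $p_{2,0}(1/a)$ first, observe that the answer is a perfect square, and then feed it back through the nesting and squaring identities to climb to $p_{4,0}$ and $p_{8,0}$.

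For (1) I would write $1/a=a/a^{2}$ and note that $1/a^{2}=(1/a)^{2}\in L^{[2]}$; the $L^{[2]}$-linearity of $p_{2,0}$ then gives $p_{2,0}(1/a)=\frac{1}{a^{2}}\,p_{2,0}(a)=\frac{b^{4}}{a^{2}}$, using the hypothesis $p_{2,0}(a)=b^{4}$.

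For (2) I would apply nesting: $p_{4,0}(1/a)=p_{4,0}\bigl(p_{2,0}(1/a)\bigr)=p_{4,0}(b^{4}/a^{2})$. The point is that $b^{4}/a^{2}=(b^{2}/a)^{2}$ is a square, so the squaring rule gives $p_{4,0}(b^{4}/a^{2})=\bigl(p_{2,0}(b^{2}/a)\bigr)^{2}$; and since $b^{2}\in L^{[2]}$, part (1) yields $p_{2,0}(b^{2}/a)=b^{2}\,p_{2,0}(1/a)=b^{6}/a^{2}$, whence $p_{4,0}(1/a)=b^{12}/a^{4}$. For (3) the same idea is iterated: from (2), $p_{4,0}(1/a)=(b^{3}/a)^{4}$ is a fourth power, so two applications of nesting together with the squaring rule collapse it to $p_{8,0}(1/a)=p_{8,0}\bigl((b^{3}/a)^{4}\bigr)=\bigl(p_{2,0}(b^{3}/a)\bigr)^{4}$.

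The one genuine subtlety, and the step I expect to be the crux, is evaluating $p_{2,0}(b^{3}/a)$, because the odd power $b^{3}$ cannot be pulled through $p_{2,0}$ (in general $b\notin L^{[2]}$). The device that resolves this is to rewrite $\frac{b^{3}}{a}=\frac{ab^{3}}{a^{2}}=\frac{b^{2}}{a^{2}}\cdot(ab)$, exhibiting $b^{3}/a$ as the $L^{[2]}$-multiple $b^{2}/a^{2}=(b/a)^{2}$ times $ab$. Then $L^{[2]}$-linearity gives $p_{2,0}(b^{3}/a)=\frac{b^{2}}{a^{2}}\,p_{2,0}(ab)$, and raising to the fourth power produces exactly $\frac{b^{8}}{a^{8}}\bigl(p_{2,0}(ab)\bigr)^{4}$. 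This rewriting is precisely the reason the factor $p_{2,0}(ab)$ appears in the statement, and it is the only place where genuine ingenuity, rather than bookkeeping, is required.
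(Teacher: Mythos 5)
Your proof is correct and follows essentially the same route as the paper's: establish (1) by $L^{[2]}$-linearity, then climb to $p_{4,0}$ and $p_{8,0}$ via the nesting identity $p_{2q,0}=p_{2q,0}\circ p_{q,0}$, the squaring rule, and the factoring-out of an even power of $b/a$ to expose $p_{2,0}(ab)$. The only difference is cosmetic --- the paper factors out $b^{8}/a^{8}$ before applying the squaring rule to $p_{8,0}(a^{4}b^{4})$, whereas you apply the squaring rule first and then factor $b^{2}/a^{2}$ out of $p_{2,0}(b^{3}/a)$ --- and these two orderings are trivially equivalent.
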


\begin{proof}
$p_{2,0}\left(\frac{1}{a}\right)=\frac{1}{a^{2}}p_{2,0}(a)=\frac{b^{4}}{a^{2}}$. Then $p_{4,0}\left(\frac{1}{a}\right)=p_{4,0}p_{2,0}\left(\frac{1}{a}\right)=p_{4,0}\left(\frac{b^{4}}{a^{2}}\right)=b^{4}\left(p_{2,0}\left(\frac{1}{a}\right)\right)^{2}=\frac{b^{12}}{a^{4}}$. Furthermore, $p_{8,0}\left(\frac{1}{a}\right)=p_{8,0}p_{4,0}\left(\frac{1}{a}\right)=p_{8,0}\left(\frac{b^{12}}{a^{4}}\right)=\linebreak \frac{b^{8}}{a^{8}}p_{8,0}\left(a^{4}b^{4}\right)$, giving the last result.
\end{proof}

\section{$\bm{l=5}$}
\label{section4}
In this section $l=5$, so that $m=2$. Then the ideal $N$ of Theorem \ref{theorem3.9} is generated by the single element $R_{2,1}=x_{2}^{4}x_{2}+x_{1}^{4}x_{4}+x_{4}x_{2}+x_{3}^{2}x_{1}^{2}=x_{1}^{5}+x_{2}^{5}+x_{1}x_{2}+x_{1}^{2}x_{2}^{2}$. Now let $r=1$ or $2$ and set $a=[r]$, $b=[2r]$. Then $p_{2,0}(a)=b^{4}$, $p_{2,0}(b)=a^{4}$ and we have the quintic relation $a^{5}+b^{5}+ab+a^{2}b^{2}=0$.

We'll use the techniques sketched in the last section to show that $p_{4,1}\left(\frac{1}{a}\right)$, $p_{4,2}\left(\frac{1}{a}\right)$, $p_{8,0}\left(\frac{1}{a}\right)$, $p_{8,3}\left(\frac{1}{a}\right)$, $p_{16,4}\left(\frac{1}{a}\right)$ and $p_{32,12}\left(\frac{1}{a}\right)$ are all in $S$. Corollary \ref{corollary3.4} in conjunction with the description of $U^{*}$ given in the introduction when $l=5$ then tells us that $B(a)$ has relative density 0 in $U^{*}$.

\begin{theorem}
\label{theorem4.1}
$p_{8,0}\left(\frac{1}{a}\right)=b^{16}$.
\end{theorem}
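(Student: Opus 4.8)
The plan is to reduce the whole computation to Lemma~\ref{lemma3.10}(3). Since $p_{2,0}(a)=b^{4}$ (one of the relations recorded at the start of the section), that lemma immediately gives
\[
p_{8,0}\!\left(\frac{1}{a}\right)=\frac{b^{8}}{a^{8}}\bigl(p_{2,0}(ab)\bigr)^{4}.
\]
Thus the only thing left to determine is the single quantity $p_{2,0}(ab)$. If it turns out that $p_{2,0}(ab)=a^{2}b^{2}$, then $\bigl(p_{2,0}(ab)\bigr)^{4}=a^{8}b^{8}$, the factor $a^{8}$ cancels, and we are left with exactly $b^{16}$.

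To evaluate $p_{2,0}(ab)$ I would apply the product rule for $p_{2,0}$, summing over the pairs $(c,d)$ with $c+d\equiv 0\pod{2}$, i.e.\ $(0,0)$ and $(1,1)$:
\[
p_{2,0}(ab)=p_{2,0}(a)\,p_{2,0}(b)+p_{2,1}(a)\,p_{2,1}(b).
\]
Because $a=p_{2,0}(a)+p_{2,1}(a)$ and we work in characteristic $2$, we have $p_{2,1}(a)=a+b^{4}$, and likewise $p_{2,1}(b)=b+a^{4}$. Substituting $p_{2,0}(a)=b^{4}$ and $p_{2,0}(b)=a^{4}$ and multiplying out, the two copies of $a^{4}b^{4}$ cancel, leaving $p_{2,0}(ab)=a^{5}+b^{5}+ab$.

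The key step is then to bring in the quintic relation $a^{5}+b^{5}+ab+a^{2}b^{2}=0$, which in characteristic $2$ reads $a^{5}+b^{5}+ab=a^{2}b^{2}$. This collapses the expression to $p_{2,0}(ab)=a^{2}b^{2}$, exactly the value anticipated above; feeding it back into the displayed form of Lemma~\ref{lemma3.10}(3) yields $p_{8,0}(1/a)=b^{16}$.

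I do not foresee any real obstacle: every step is a mechanical application of the projection identities except for the one genuinely substantive move, namely recognizing that the quintic theta relation is precisely what is needed to turn $a^{5}+b^{5}+ab$ into the clean monomial $a^{2}b^{2}$. Without that relation the answer would be an unwieldy rational expression in $a$ and $b$ rather than a pure power of $b$, so this is where the content of the theorem really lies.
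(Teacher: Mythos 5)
Your proof is correct, and its skeleton is the same as the paper's: both invoke Lemma~\ref{lemma3.10}(3) to reduce everything to the single quantity $p_{2,0}(ab)$, and both finish once $p_{2,0}(ab)=a^{2}b^{2}$ is known. Where you differ is in how that quantity is evaluated. The paper applies Lemma~\ref{lemma3.5} directly: since $a=[r]=[4r]$ and $b=[2r]$, one has $p_{2,0}(ab)=p_{2,0}([4r][2r])=[3r]^{2}[r]^{2}=a^{2}b^{2}$ in one line. You instead expand $p_{2,0}(ab)$ by the product rule to get $a^{5}+b^{5}+ab$ and then invoke the quintic relation to collapse this to $a^{2}b^{2}$. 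These are the same computation run in opposite directions: Theorem~\ref{theorem3.6} (the source of the quintic relation) is itself proved by equating the product-rule expansion of $p_{2,0}([2i][2j])$ with the value $[i+j]^{2}[i-j]^{2}$ supplied by Lemma~\ref{lemma3.5}, so your sequence ``product rule, then quintic relation'' is the paper's one-line appeal to Lemma~\ref{lemma3.5} with the derivation of Theorem~\ref{theorem3.6} spliced in and then undone. Nothing is lost, but your closing remark that the quintic relation is where the content of the theorem really lies is slightly off: the substantive input is Lemma~\ref{lemma3.5}, of which the quintic relation is a repackaging.
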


\begin{proof}
By Lemma \ref{lemma3.10}, $p_{8,0}\left(\frac{1}{a}\right)=\frac{b^{8}}{a^{8}}\left(p_{2,0}(ab)\right)^{4}$. Now $p_{2,0}(ab)=p_{2,0}([4r][2r])=[3r]^{2}\cdot [r]^{2}=a^{2}b^{2}$.
\end{proof}

\begin{theorem}
\label{theorem4.2}
$p_{4,2}\left(\frac{1}{a}\right)$, $p_{4,1}\left(\frac{1}{a}\right)$ and $p_{8,3}\left(\frac{1}{a}\right)$ are in $S$.
\end{theorem}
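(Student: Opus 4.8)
The plan is to compute, for each of the three operators, the numerator in
$$p_{q,j}\left(\tfrac{1}{a}\right)=\frac{1}{a^{q}}\,p_{q,j}\left(a^{q-1}\right)$$
explicitly as a polynomial in $a$ and $b$, and then to apply Theorem \ref{theorem3.9} to see that the resulting quotient by $a^{q}$ again lies in $S$. Throughout I use only the relations recorded for $l=5$: $p_{2,0}(a)=b^{4}$, $p_{2,0}(b)=a^{4}$, the quintic relation $a^{5}+b^{5}+ab+a^{2}b^{2}=0$, and the identity $p_{2q,2j}\!\left(F^{2}\right)=\left(p_{q,j}(F)\right)^{2}$; these hold for both $r=1$ and $r=2$, so the argument is uniform in $r$.

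First I would record the relevant projections of $a$, $a^{2}$, $a^{4}$. Since integer squares are $\equiv 0,1,4\pmod{8}$, the operator $p_{8,j}$ kills $a$ unless $j\in\{0,1,4\}$; combining $p_{8,0}=p_{8,0}p_{2,0}$ with the squaring identity and the values of $p_{2,0}$ one finds $p_{8,1}(a)=a+b^{4}$, $p_{8,0}(a)=a^{16}$, $p_{8,2}\!\left(a^{2}\right)=\left(a+b^{4}\right)^{2}$, $p_{8,0}\!\left(a^{2}\right)=b^{8}$, $p_{8,0}\!\left(a^{4}\right)=b^{16}$, together with the weight-$4$ analogues $p_{4,0}(a)=b^{4}$, $p_{4,1}(a)=a+b^{4}$, $p_{4,0}\!\left(a^{2}\right)=b^{8}$, $p_{4,2}\!\left(a^{2}\right)=\left(a+b^{4}\right)^{2}$. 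Feeding these into the product formula for $p_{q,j}(FG)$ and discarding the (many) vanishing terms, exactly one term survives in each case:
\begin{gather*}
p_{4,2}\left(a^{3}\right)=p_{4,0}(a)\,p_{4,2}\!\left(a^{2}\right)=b^{4}\left(a+b^{4}\right)^{2},\\
p_{4,1}\left(a^{3}\right)=p_{4,1}(a)\,p_{4,0}\!\left(a^{2}\right)=b^{8}\left(a+b^{4}\right),\\
p_{8,3}\left(a^{7}\right)=p_{8,1}(a)\,p_{8,2}\!\left(a^{2}\right)\,p_{8,0}\!\left(a^{4}\right)=b^{16}\left(a+b^{4}\right)^{3},
\end{gather*}
where in the last line I write $a^{7}=a\cdot a^{2}\cdot a^{4}$.

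It then remains to show that $b^{4}\left(a+b^{4}\right)^{2}/a^{4}$, $b^{8}\left(a+b^{4}\right)/a^{4}$ and $b^{16}\left(a+b^{4}\right)^{3}/a^{8}$ lie in $S$. By Theorem \ref{theorem3.9}, writing $a=\phi_{r}(x_{1})$, $b=\phi_{r}(x_{2})$ and taking $N=(R_{2,1})$ with $R_{2,1}=x_{1}^{5}+x_{2}^{5}+x_{1}x_{2}+x_{1}^{2}x_{2}^{2}$, it suffices to check that each numerator lies in the ideal $\left(N,x_{1}^{q}\right)$ for $q=4,4,8$ respectively. Concretely I reduce modulo the quintic relation, i.e.\ use $b^{5}=a^{5}+ab+a^{2}b^{2}$, and check divisibility by the appropriate power of $a$. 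For the two weight-$4$ cases this is immediate: squaring the relation gives $b^{10}=a^{10}+a^{2}b^{2}+a^{4}b^{4}$, hence $b^{12}=a^{10}b^{2}+a^{2}b^{4}+a^{4}b^{6}$, from which $b^{4}\left(a+b^{4}\right)^{2}=a^{2}b^{4}+b^{12}\equiv a^{4}\left(a^{6}b^{2}+b^{6}\right)$ and $b^{8}\left(a+b^{4}\right)=ab^{8}+b^{12}\equiv a^{4}\cdot(\text{polynomial})$ modulo $R_{2,1}$, so both quotients are exhibited in $S$ outright.

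The hard part will be the $p_{8,3}$ case, where I must show that $a^{8}$ divides $b^{16}\left(a+b^{4}\right)^{3}$ modulo $R_{2,1}$. Here I would pass to the finite-dimensional quotient $\newz/2[a,b]/\left(a^{8},R_{2,1}\right)$ and lower every power of $b$ below $b^{5}$ by repeated use of $b^{5}=a^{5}+ab+a^{2}b^{2}$, organizing the bookkeeping with the Frobenius shortcut $\left(\sum t_{i}\right)^{2}=\sum t_{i}^{2}$; for instance $b^{10}=a^{2}b^{2}+a^{4}b^{4}$ and $b^{16}=a^{5}b+a^{3}b^{4}+a^{7}b^{3}$ in this quotient. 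Computing $\left(a+b^{4}\right)^{3}\equiv a^{3}+a^{2}b^{4}+a^{4}b$ and multiplying by $b^{16}$, every surviving monomial cancels in pairs, so $b^{16}\left(a+b^{4}\right)^{3}$ vanishes in $\newz/2[a,b]/\left(a^{8},R_{2,1}\right)$; this is precisely the membership $b^{16}\left(a+b^{4}\right)^{3}\in\left(N,x_{1}^{8}\right)$. Alternatively, all three ideal memberships can be certified by the Gr\"obner basis computation mentioned in the Remark following Theorem \ref{theorem3.9}. Thus each of $p_{4,2}\left(\tfrac{1}{a}\right)$, $p_{4,1}\left(\tfrac{1}{a}\right)$, $p_{8,3}\left(\tfrac{1}{a}\right)$ lies in $S$, which is the assertion of the theorem and, via Corollary \ref{corollary3.4}, yields relative density $0$ in the corresponding congruence classes.
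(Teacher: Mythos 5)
Your proposal is correct and takes essentially the same route as the paper: the same surviving-term factorizations $p_{4,2}(a^{3})=p_{4,0}(a)p_{4,2}(a^{2})$, $p_{4,1}(a^{3})=p_{4,1}(a)p_{4,0}(a^{2})$ and $p_{8,3}(a^{7})=p_{8,1}(a)p_{8,2}(a^{2})p_{8,0}(a^{4})$ produce the identical quotients $b^{4}(a+b^{4})^{2}/a^{4}$, $b^{8}(a+b^{4})/a^{4}$ and $b^{16}(a+b^{4})^{3}/a^{8}$, and membership in $S$ is then settled via Theorem \ref{theorem3.9} and the quintic relation. The only difference is that you verify the ideal memberships by hand (your reductions, including the final cancellation in $\newz/2[a,b]/(a^{8},R_{2,1})$, check out), whereas the paper first reduces the three quotients to the two elements $\frac{b^{2}}{a^{2}}(a+b^{4})$ and $\frac{b^{8}}{a^{4}}(a+b^{4})$ using closure of $S$ under squaring and then delegates the check to Macaulay 2, while explicitly noting it can be done by hand.
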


\begin{proof}
We first write these power series as quotients of elements of $S$.
\begin{enumerate}
\item[(1)] $p_{4,2}\left(\frac{1}{a}\right)=p_{2,0}\left(\frac{1}{a}\right)+p_{4,0}\left(\frac{1}{a}\right)=\frac{b^{4}}{a^{2}}+\frac{b^{12}}{a^{4}}=\left(\frac{b^{4}}{a^{4}}\right)\left(a^{2}+b^{8}\right)$.
\item[(2)] $p_{4,1}\left(\frac{1}{a}\right)=\left(\frac{1}{a^{4}}\right)p_{4,1}(a)p_{4,0}(a^{2})=\left(\frac{1}{a^{4}}\right)p_{2,1}(a)\left(p_{2,0}(a)\right)^{2}=\left(\frac{b^{8}}{a^{4}}\right)\left(a+b^{4}\right)$.
\item[(3)] $p_{8,3}\left(\frac{1}{a}\right)=\left(\frac{1}{a^{8}}\right)p_{8,1}(a)p_{8,2}(a^{2})p_{8,0}(a^{4})=\left(\frac{1}{a^{8}}\right)p_{2,1}(a)\left(p_{2,1}(a)\right)^{2}\left(p_{2,0}(a)\right)^{4}=\left(\frac{b^{16}}{a^{8}}\right)\left(a+b^{4}\right)^{3}$.
\end{enumerate}

In view of (1), (2) and (3) it will suffice to show that $\frac{b^{2}}{a^{2}}(a+b^{4})$ and $\frac{b^{8}}{a^{4}}(a+b^{4})$ are each in $S$. This can be done by hand, but in the mechanized spirit of the paper I'll give a computer argument. First let $u=x_{2}^{2}(x_{1}+x_{2}^{4})$ and $v=x_{1}^{2}$. Macaulay 2 tells us that $(N,v)=(N,u,v)$. So by Theorem \ref{theorem3.9}, $\phi_{r}(u)/\phi_{r}(v)$ is in $S$. But $\phi_{r}(u)/\phi_{r}(v)=\frac{b^{2}}{a^{2}}(a+b^{4})$. For the second result we argue similarly taking $u=x_{2}^{8}(x_{1}+x_{2}^{4})$ and $v=x_{1}^{4}$.
\end{proof}

\begin{lemma}
\label{lemma4.3}
$p_{8,4}\left(\frac{1}{a}\right)+\left(p_{2,1}\left(\frac{1}{b}\right)\right)^{4}=a^{4}+b^{16}$.
\end{lemma}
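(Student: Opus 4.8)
The plan is to reduce the whole identity to the quintic theta relation $a^5+b^5+ab+a^2b^2=0$ raised to the fourth power; everything else is bookkeeping with the projection operators. The key observation is that $p_{8,4}(1/a)$ need not be computed from scratch but can be recovered from quantities already in hand. In characteristic $2$ a fourth power satisfies $f^4=(p_{2,0}(f))^4+(p_{2,1}(f))^4$, where the first summand lies in $L^{[8]}$ and the second in $x^4L^{[8]}$; hence $L^{[4]}=L^{[8]}\oplus x^4L^{[8]}$, and the projection onto $L^{[4]}$ splits as $p_{4,0}=p_{8,0}+p_{8,4}$. Working mod $2$ this gives $p_{8,4}(1/a)=p_{4,0}(1/a)+p_{8,0}(1/a)$. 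By Lemma \ref{lemma3.10}(2) we have $p_{4,0}(1/a)=b^{12}/a^4$, and by Theorem \ref{theorem4.1} we have $p_{8,0}(1/a)=b^{16}$, so $p_{8,4}(1/a)=b^{12}/a^4+b^{16}$.

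Next I would compute the second term on the left. Since $p_{2,0}(b)=a^4$, we get $p_{2,1}(b)=b+a^4$; and because $1/b=(1/b^2)\,b$ with $1/b^2\in L^{[2]}$, the odd part is $p_{2,1}(1/b)=(1/b^2)\,p_{2,1}(b)=(b+a^4)/b^2$. Raising to the fourth power in characteristic $2$ yields $\left(p_{2,1}(1/b)\right)^4=(b^4+a^{16})/b^8$.

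Adding the two contributions, the claimed identity $p_{8,4}(1/a)+\left(p_{2,1}(1/b)\right)^4=a^4+b^{16}$ becomes, after cancelling $b^{16}$ from both sides and clearing the common denominator $a^4b^8$, the polynomial relation
$$a^{20}+b^{20}+a^4b^4+a^8b^8=0.$$
But this is precisely the fourth power of the quintic relation, using $(w+x+y+z)^4=w^4+x^4+y^4+z^4$ in characteristic $2$ applied to $a^5+b^5+ab+a^2b^2=0$. This finishes the proof.

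The only genuine input is the quintic relation; the rest is formal manipulation of the $p_{q,j}$. The main obstacle, such as it is, is organizational rather than conceptual: one must resist computing $p_{8,4}(1/a)$ directly and instead write it as $p_{4,0}(1/a)+p_{8,0}(1/a)$ so that the already-established Lemma \ref{lemma3.10} and Theorem \ref{theorem4.1} apply, and one must track the cancellations in clearing denominators carefully so that exactly the fourth power of the quintic emerges, with no leftover terms.
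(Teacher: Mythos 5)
Your proof is correct and follows essentially the same route as the paper: both compute $p_{8,4}\left(\frac{1}{a}\right)=p_{4,0}\left(\frac{1}{a}\right)+p_{8,0}\left(\frac{1}{a}\right)=\frac{b^{12}}{a^{4}}+b^{16}$ via Lemma \ref{lemma3.10} and Theorem \ref{theorem4.1}, compute $p_{2,1}\left(\frac{1}{b}\right)=\frac{1}{b}+\frac{a^{4}}{b^{2}}$, and then invoke the quintic relation. The only (cosmetic) difference is that the paper groups $\frac{b^{12}}{a^{4}}+\left(p_{2,1}\left(\frac{1}{b}\right)\right)^{4}$ into the single fourth power $\left(\frac{b^{3}}{a}+\frac{1}{b}+\frac{a^{4}}{b^{2}}\right)^{4}$ and simplifies the base to $a$ using the quintic relation, whereas you clear denominators and recognize the resulting identity as the fourth power of that same relation.
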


\begin{proof}
$p_{8,4}\left(\frac{1}{a}\right)=p_{4,0}\left(\frac{1}{a}\right)+p_{8,0}\left(\frac{1}{a}\right)=\frac{b^{12}}{a^{4}}+b^{16}$, by Lemma \ref{lemma3.10} and Theorem \ref{theorem4.1}. Furthermore $p_{2,1}\left(\frac{1}{b}\right)=\frac{1}{b}+p_{2,0}\left(\frac{1}{b}\right)=\frac{1}{b}+\frac{a^{4}}{b^{2}}$. So the left hand side in the statement of Lemma \ref{lemma4.3} is $b^{16}+\left(\frac{b^{3}}{a}+\frac{1}{b}+\frac{a^{4}}{b^{2}}\right)^{4}$. But the quintic relation $a^{5}+b^{5}+ab+a^{2}b^{2}=0$ tells us that $\frac{b^{3}}{a}+\frac{1}{b}+\frac{a^{4}}{b^{2}}=\frac{1}{ab^{2}}\left(b^{5}+ab+a^{5}\right)=a$.
\end{proof}

\begin{theorem}
\label{theorem4.4}
$p_{16,4}\left(\frac{1}{a}\right)$ and $p_{32,12}\left(\frac{1}{a}\right)$ are in $S$.
\end{theorem}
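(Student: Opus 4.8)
The plan is to reduce both projections to fourth powers of lower projections of the single series $b^{3}/a$, by repeatedly peeling off even exponents and applying the squaring rule $p_{2q,2j}(F^{2})=(p_{q,j}(F))^{2}$. The payoff is that $p_{16,4}(1/a)$ will fall out by hand as an explicit polynomial in $a,b$, while $p_{32,12}(1/a)$ will require the quintic relation through Theorem~\ref{theorem3.9}.

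Since $4$ and $12$ are even, $p_{16,4}$ and $p_{32,12}$ factor through $p_{2,0}$, and by Lemma~\ref{lemma3.10} we have $p_{2,0}(1/a)=b^{4}/a^{2}=(b^{2}/a)^{2}$. The squaring rule then gives $p_{16,4}(1/a)=\bigl(p_{8,2}(b^{2}/a)\bigr)^{2}$ and $p_{32,12}(1/a)=\bigl(p_{16,6}(b^{2}/a)\bigr)^{2}$. Now $p_{8,2}$ and $p_{16,6}$ again see only even exponents, so using $L^{[2]}$-linearity to write $p_{2,0}(b^{2}/a)=b^{2}p_{2,0}(1/a)=b^{6}/a^{2}=(b^{3}/a)^{2}$ and squaring once more, I obtain the clean reductions
\[
p_{16,4}\left(\frac{1}{a}\right)=\bigl(p_{4,1}(b^{3}/a)\bigr)^{4},
\qquad
p_{32,12}\left(\frac{1}{a}\right)=\bigl(p_{8,3}(b^{3}/a)\bigr)^{4}.
\]

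For $p_{16,4}$ the computation finishes with no modular input and no quintic relation. Writing $b^{3}/a=a^{3}b^{3}/a^{4}$ gives $p_{4,1}(b^{3}/a)=(1/a^{4})\,p_{4,1}(a^{3}b^{3})$, and expanding by the product rule only the term $p_{4,1}(ab)\,p_{4,0}\bigl((ab)^{2}\bigr)$ survives, because sums of two squares are never $\equiv 3\pmod 4$ (so $p_{4,3}(ab)=0$). Using $p_{4,1}(ab)=p_{2,1}(ab)=ab+a^{2}b^{2}$ and $p_{4,0}\bigl((ab)^{2}\bigr)=(p_{2,0}(ab))^{2}=a^{4}b^{4}$, I get $p_{4,1}(b^{3}/a)=ab^{5}+a^{2}b^{6}$, so that $p_{16,4}(1/a)=(ab^{5}+a^{2}b^{6})^{4}=a^{4}b^{20}+a^{8}b^{24}$, which is visibly in $S$.

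The harder case is $p_{32,12}$, i.e.\ showing $p_{8,3}(b^{3}/a)\in S$. I would write $p_{8,3}(b^{3}/a)=(1/a^{8})\,p_{8,3}(a^{7}b^{3})$; since $a^{7}b^{3}\in S$ and $p_{8,3}(S)\subseteq S$ by Lemma~\ref{lemma2.2}, this exhibits the series as a quotient $\phi_{r}(u)/\phi_{r}(v)$ with $v=x_{1}^{8}$ and $u$ a polynomial lift of $p_{8,3}(a^{7}b^{3})$. The crucial difference from the $p_{16,4}$ case is that the naive quotient is \emph{not} a polynomial in $a,b$: its lowest $a$-degree terms only become divisible by $a^{8}$ after using $a^{5}+b^{5}+ab+a^{2}b^{2}=0$, so the membership genuinely rests on the quintic theta relation. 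I would therefore invoke Theorem~\ref{theorem3.9}, verifying with Macaulay~2 that $(N,x_{1}^{8})=(N,u,x_{1}^{8})$; equivalently one produces an explicit cofactor $h$ with $u\equiv h\,R_{2,1}\pmod{x_{1}^{8}}$, which does exist. Then $p_{8,3}(b^{3}/a)\in S$, and hence its fourth power $p_{32,12}(1/a)$ lies in $S$ as well. This last step is the main obstacle, since there is no way around the Gröbner-basis check (or an equivalent hand computation of the cofactor). As a consistency check one can use $p_{8,4}(1/a)=p_{16,4}(1/a)+p_{32,12}(1/a)+p_{32,28}(1/a)$ together with the closed form for $p_{8,4}(1/a)$ supplied by Lemma~\ref{lemma4.3}, confirming that the two $S$-members just produced are compatible with the remaining (basic-class) piece $p_{32,28}(1/a)$.
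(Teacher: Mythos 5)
Your argument is correct, but it takes a genuinely different route from the paper's. The paper proves this theorem by applying $p_{16,4}$ and $p_{32,12}$ to the identity of Lemma \ref{lemma4.3}, which yields $p_{16,4}\left(\frac{1}{a}\right)+\left(p_{4,1}\left(\frac{1}{b}\right)\right)^{4}=a^{4}+b^{16}$ and $p_{32,12}\left(\frac{1}{a}\right)+\left(p_{8,3}\left(\frac{1}{b}\right)\right)^{4}=0$; everything then reduces to Theorem \ref{theorem4.2} with $r$ replaced by $2r$, so no new Gr\"{o}bner computation is needed and the quintic relation enters only once, inside Lemma \ref{lemma4.3}. Your reduction $p_{16,4}\left(\frac{1}{a}\right)=\left(p_{4,1}\left(\frac{b^{3}}{a}\right)\right)^{4}$, $p_{32,12}\left(\frac{1}{a}\right)=\left(p_{8,3}\left(\frac{b^{3}}{a}\right)\right)^{4}$ is instead exactly the device the paper uses later for $l=9$ (Theorems \ref{theorem6.1} and \ref{theorem6.5}), transplanted to $l=5$. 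What it buys for $p_{16,4}$ is a genuinely nicer result: your computation $p_{4,1}\left(\frac{b^{3}}{a}\right)=ab^{5}+a^{2}b^{6}$ checks out (I verified the leading term $x^{84}$ of $a^{4}b^{20}+a^{8}b^{24}$ against a direct expansion of $1/[1]$ for $r=1$), and it gives a closed form in $S$ with no quintic relation and no computer algebra at all, which is cleaner than the paper's expression. What it costs is on the $p_{32,12}$ side: you must run a fresh Macaulay 2 check that $\frac{1}{a^{8}}p_{8,3}(a^{7}b^{3})$ lies in $S$, which you assert but do not carry out; this is the same standard of rigor the paper adopts elsewhere, and the claim is in fact true (your quantity must equal $p_{8,3}\left(\frac{1}{b}\right)=\frac{a^{16}}{b^{8}}\left(b+a^{4}\right)^{3}$, since both have fourth power $p_{32,12}\left(\frac{1}{a}\right)$ and fourth roots are unique in characteristic 2), but the paper's route through Lemma \ref{lemma4.3} dispenses with this extra verification entirely. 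Your closing consistency check via $p_{8,4}=p_{16,4}+p_{32,12}+p_{32,28}$ is a sound decomposition, though not load-bearing.
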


\begin{proof}
Applying $p_{16,4}$ to the identity of Lemma \ref{lemma4.3} we find that $p_{16,4}\left(\frac{1}{a}\right)+\left(p_{4,1}\left(\frac{1}{b}\right)\right)^{4}=\left(p_{4,1}(a)\right)^{4}=a^{4}+b^{16}$. But Theorem \ref{theorem4.2} (with $r$ replaced by $2r$) tells us that $p_{4,1}\left(\frac{1}{b}\right)$ is in $S$. Applying $p_{32,12}$ to the identity of Lemma \ref{lemma4.3} we find that $p_{32,12}\left(\frac{1}{a}\right)+\left(p_{8,3}\left(\frac{1}{b}\right)\right)^{4}=\left(p_{8,3}(a)\right)^{4}=0$. And Theorem \ref{theorem4.2} (with $r$ replaced by $2r$) shows that $p_{8,3}\left(\frac{1}{b}\right)$ is in $S$.
\end{proof}

\section{$\bm{l=7}$}
\label{section5}
In this section $l=7$. Then $m=3$ and the ideal $N$ is generated by $x_{1}^{5}+x_{3}^{4}x_{2}+x_{1}x_{2}+x_{2}^{2}x_{3}^{2}$, $x_{2}^{5}+x_{1}^{4}x_{3}+x_{2}x_{3}+x_{3}^{2}x_{1}^{2}$ and $x_{3}^{5}+x_{2}^{4}x_{1}+x_{3}x_{1}+x_{1}^{2}x_{2}^{2}$. Let $r$ be 1, 2 or 3, $a=[r]$, $b=[4r]$, $c=[2r]$. Then $p_{2,0}$ takes $a$, $b$ and $c$ to $b^{4}$, $c^{4}$ and $a^{4}$. Lemma \ref{lemma3.5} shows that $p_{2,0}$ takes $ab$, $bc$ and $ac$ to $a^{2}c^{2}$, $a^{2}b^{2}$ and $b^{2}c^{2}$. We'll prove that $B(a)$ has relative density 0 in $U^{*}$ by showing that each of $p_{4,1}\left(\frac{1}{a}\right)$, $p_{8,0}\left(\frac{1}{a}\right)$, $p_{8,2}\left(\frac{1}{a}\right)$, $p_{8,3}\left(\frac{1}{a}\right)$, $p_{16,4}\left(\frac{1}{a}\right)$, $p_{16,6}\left(\frac{1}{a}\right)$ and $p_{32,12}\left(\frac{1}{a}\right)$ is in $S$.

\begin{remark*}
In this case, as in the case $l=5$, $N$ is the kernel of each $\phi_{r}$. This is not true when $l=9$. Whether it holds for all prime $l$ is an interesting question.
\end{remark*}

\begin{theorem}
\label{theorem5.1}
$p_{8,0}\left(\frac{1}{a}\right)=b^{8}c^{8}$, and $p_{8,2}\left(\frac{1}{a}\right)=\left(a^{2}+b^{8}\right)c^{8}$.
\end{theorem}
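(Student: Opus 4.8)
The plan is to treat the two identities separately, reducing each $p_{8,j}(1/a)$ to a computation involving the single-theta projections $p_{8,i}(a)$, $p_{8,i}(a^2)$, $p_{8,i}(a^4)$, all of which are governed by the mod-$8$ behaviour of squares exactly as in the proof of Lemma \ref{lemma2.2}, together with the squaring rule $p_{2q,2j}(F^2)=(p_{q,j}(F))^2$.

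For the first identity I would simply invoke Lemma \ref{lemma3.10}(3). Since $p_{2,0}(a)=b^4$, that lemma gives $p_{8,0}\left(\frac{1}{a}\right)=\frac{b^8}{a^8}\left(p_{2,0}(ab)\right)^4$. By Lemma \ref{lemma3.5} (equivalently, the values recorded at the start of this section) $p_{2,0}(ab)=a^2c^2$, so $p_{8,0}\left(\frac{1}{a}\right)=\frac{b^8}{a^8}(a^2c^2)^4=b^8c^8$, as claimed.

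For the second identity I would write $\frac{1}{a}=\frac{a^7}{a^8}$, so that $p_{8,2}\left(\frac{1}{a}\right)=\frac{1}{a^8}p_{8,2}(a^7)$, and expand $a^7=a\cdot a^2\cdot a^4$ by the product rule for $p_{8,2}$. The key bookkeeping step is that $p_{8,i}(a)=0$ unless $i\in\{0,1,4\}$, $p_{8,j}(a^2)=0$ unless $j\in\{0,2\}$, and $p_{8,k}(a^4)=0$ unless $k\in\{0,4\}$; running over triples with $i+j+k\equiv 2\pmod 8$ leaves exactly $(0,2,0)$ and $(4,2,4)$. The relevant projections are $p_{8,0}(a)=c^{16}$, $p_{8,4}(a)=b^4+c^{16}$ (from $p_{8,0}(a)+p_{8,4}(a)=p_{2,0}(a)=b^4$), $p_{8,2}(a^2)=(p_{2,1}(a))^2=(a+b^4)^2=a^2+b^8$, $p_{8,0}(a^4)=(p_{2,0}(a))^4=b^{16}$ and $p_{8,4}(a^4)=(p_{2,1}(a))^4=a^4+b^{16}$. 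Substituting and factoring out $a^2+b^8$ should yield $p_{8,2}(a^7)=(a^2+b^8)\left(a^4b^4+b^{20}+a^4c^{16}\right)$.

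The last step, the only point requiring the quintic relations rather than pure bookkeeping, is to simplify the second factor; I expect this to be the main (if small) obstacle. One must recognize that $a^4b^4+b^{20}+a^4c^{16}=a^8c^8$ is precisely the fourth power of the quintic relation $b^5+ac^4+ab+a^2c^2=0$ (the image under $\phi_r$ of the generator $x_3^5+x_2^4x_1+x_3x_1+x_1^2x_2^2$ of $N$), since in characteristic $2$ raising that relation to the fourth power sends its four monomials to $b^{20}$, $a^4c^{16}$, $a^4b^4$, $a^8c^8$. Granting this, dividing the second factor by $a^8$ gives $c^8$, and hence $p_{8,2}\left(\frac{1}{a}\right)=(a^2+b^8)c^8$, completing the proof.
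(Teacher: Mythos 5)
Your proof is correct, but the second identity is handled by a genuinely different route than the paper's. For $p_{8,0}(1/a)$ you do exactly what the paper does: Lemma \ref{lemma3.10}(3) plus $p_{2,0}(ab)=a^2c^2$. For $p_{8,2}(1/a)$, however, the paper does not expand $p_{8,2}(a^7)$ at all: it writes $p_{8,2}=p_{8,2}\circ p_{2,0}$, so that $p_{8,2}(1/a)=p_{8,2}(b^4/a^2)=\bigl(p_{4,1}(b^2/a)\bigr)^2$, and then computes $p_{4,1}(b^2/a)=\frac{1}{a^4}p_{2,1}(a)\bigl(p_{2,0}(ab)\bigr)^2=(a+b^4)c^4$ using only the two facts $p_{2,0}(a)=b^4$ and $p_{2,0}(ab)=a^2c^2$; squaring gives $(a^2+b^8)c^8$ with no appeal to the ideal $N$. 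Your version instead carries out the full mod-$8$ bookkeeping on $a^7=a\cdot a^2\cdot a^4$ (your list of surviving triples $(0,2,0)$ and $(4,2,4)$ and the values of the individual projections all check out, giving $p_{8,2}(a^7)=(a^2+b^8)(a^4b^4+b^{20}+a^4c^{16})$), and then closes the gap with the quintic relation $b^5+ac^4+ab+a^2c^2=0$, whose fourth power is exactly $a^4b^4+b^{20}+a^4c^{16}=a^8c^8$. Both arguments are sound and rest ultimately on the same ingredients (Lemma \ref{lemma2.2} and Lemma \ref{lemma3.5}, the latter being where the quintic relations come from anyway); the paper's factor-through-$p_{2,0}$ trick keeps the computation at the $p_{4,1}$ level and avoids any division in $S$, while your expansion is more mechanical and closer in spirit to the later Macaulay 2 computations, at the cost of having to recognize the fourth power of a quintic relation to finish.
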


\begin{proof}
By Lemma \ref{lemma3.10}, $p_{8,0}\left(\frac{1}{a}\right)=\left(\frac{b^{8}}{a^{8}}\right)\left(p_{2,0}(ab)\right)^{4}=b^{8}c^{8}$. Also, $p_{8,2}\left(\frac{1}{a}\right)=p_{8,2}p_{2,0}\left(\frac{1}{a}\right)=p_{8,2}\left(\frac{b^{4}}{a^{2}}\right)=\left(p_{4,1}\left(\frac{b^{2}}{a}\right)\right)^{2}$.  And $p_{4,1}\left(\frac{b^{2}}{a}\right)=\frac{1}{a^{4}}p_{4,1}(a)p_{4,0}(a^{2}b^{2})=\frac{1}{a^{4}}p_{2,1}(a)\left(p_{2,0}(ab)\right)^{2}=(a+b^{4})\cdot c^{4}$.
\end{proof}

\begin{theorem}
\label{theorem5.2}
$p_{4,1}\left(\frac{1}{a}\right)$, $p_{8,3}\left(\frac{1}{a}\right)$ and $p_{16,6}\left(\frac{1}{a}\right)$ are in $S$.
\end{theorem}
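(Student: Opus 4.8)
The plan is to follow the template already established for \(l=5\) in Theorems \ref{theorem4.2} and \ref{theorem4.4}: for each of the three projections I will write \(p_{q,j}(1/a)\) as a single quotient \(\phi_r(u)/\phi_r(v)\) of elements of \(S\) (a product of powers of the \([j]\) divided by a power of \(a\)), and then invoke Theorem \ref{theorem3.9} to certify that the quotient lies in \(S\) by a Macaulay 2 check of the ideal equality \((N,v)=(N,u,v)\), where \(N\) is the \(l=7\) ideal and the relevant dictionary is \(a=\phi_r(x_1)\), \(c=\phi_r(x_2)\), \(b=\phi_r(x_3)=\phi_r(x_4)\). All the folding computations below use only the facts, recorded at the start of this section, that \(p_{2,0}(a)=b^4\) and hence \(p_{2,1}(a)=a+b^4\), together with the elementary vanishing \(p_{4,2}(a)=p_{4,3}(a)=0\) (squares are \(\equiv 0,1\pmod 4\)) and the rule \(p_{2q,2j}(F^2)=(p_{q,j}(F))^2\).

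For \(p_{4,1}(1/a)\) and \(p_{8,3}(1/a)\) the computation is formally identical to the \(l=5\) case, since it never refers to \(p_{2,0}(b)\): folding \(a^3=a\cdot a^2\) and \(a^7=a\cdot a^2\cdot a^4\) gives \(p_{4,1}(1/a)=\frac{b^8}{a^4}(a+b^4)\) and \(p_{8,3}(1/a)=\frac{b^{16}}{a^8}(a+b^4)^3\), exactly as in Theorem \ref{theorem4.2}. It therefore suffices to show that the two quotients \(\frac{b^2}{a^2}(a+b^4)\) and \(\frac{b^8}{a^4}(a+b^4)\) lie in \(S\); each is a single Theorem \ref{theorem3.9} check (the first with \(u=x_3^2(x_1+x_3^4)\), \(v=x_1^2\), the second with \(u=x_3^8(x_1+x_3^4)\), \(v=x_1^4\)). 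The second quotient is \(p_{4,1}(1/a)\) itself, and for \(p_{8,3}\) I combine via
\[
p_{8,3}\!\left(\tfrac{1}{a}\right)=\frac{b^8}{a^4}(a+b^4)\cdot\left(b^2\cdot\frac{b^2}{a^2}(a+b^4)\right)^{2},
\]
each factor of which is now known to be in \(S\).

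The genuinely new step is \(p_{16,6}(1/a)\), and here I would exploit that \(6\) is even. For any \(F\) the odd part \(p_{2,1}(F)\) is annihilated by \(p_{16,6}\), so \(p_{16,6}(F)=p_{16,6}(p_{2,0}(F))=\bigl(p_{8,3}(\sqrt{p_{2,0}(F)})\bigr)^{2}\). Taking \(F=1/a\) and using \(p_{2,0}(1/a)=b^4/a^2\) from Lemma \ref{lemma3.10}, so that \(\sqrt{p_{2,0}(1/a)}=b^2/a\), I get the reduction
\[
p_{16,6}\!\left(\tfrac{1}{a}\right)=\left(p_{8,3}\!\left(\frac{b^2}{a}\right)\right)^{2}.
\]
Thus it is enough to place \(p_{8,3}(b^2/a)\) in \(S\). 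Folding \(b^2a^7=(ab)^2\cdot a\cdot a^4\) and using \(p_{4,3}(ab)=0\) (a sum of two squares is never \(\equiv 3\pmod 4\)) leaves a single surviving term, giving the \(S\)-quotient \(p_{8,3}(b^2/a)=\frac{b^{16}}{a^8}(a+b^4)(b^5+ac^4)^2\); its membership in \(S\) is one more Theorem \ref{theorem3.9} check (with \(u=x_3^{16}(x_1+x_3^4)(x_3^5+x_1x_2^4)^2\), \(v=x_1^8\)), and squaring finishes \(p_{16,6}\).

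The routine foldings are pure bookkeeping; the real content is the square-folding identity that collapses the order-\(16\) projection \(p_{16,6}\) into a single order-\(8\) computation of \(p_{8,3}(b^2/a)\), explaining why this projection is grouped with \(p_{4,1}\) and \(p_{8,3}\) rather than with the \(p_{16,4}\), \(p_{32,12}\) family (which instead needs a Lemma \ref{lemma4.3}-style relation back to \(1/b\)). I expect the main obstacle to be the ideal-equality verifications themselves: these are where the \(l=7\) quintic relations \(R_{i,j}\) must actually be strong enough to carry the division, and they are delegated to the computer. Accordingly the step I would run first is the three \((N,v)=(N,u,v)\) checks, since a failure there—rather than in any of the hand computations—is the only place the argument could break down. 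Once all four quotients are confirmed to lie in \(S\), Corollary \ref{corollary3.4} yields relative density \(0\) for \(B(a)\) in the corresponding congruence classes.
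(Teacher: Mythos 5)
Your proposal is correct and follows essentially the same route as the paper: the same reduction $p_{16,6}(1/a)=\bigl(p_{8,3}(b^2/a)\bigr)^2$ via $p_{2,0}(1/a)=b^4/a^2$, the same foldings for $p_{4,1}$ and $p_{8,3}$ carried over from the $l=5$ case, and the same appeal to Theorem \ref{theorem3.9} with Macaulay 2 ideal checks. The only (harmless) discrepancy is that the direct folding yields $p_{8,3}(b^2/a)=\frac{b^{16}}{a^8}(a+b^4)\bigl(ab+a^2c^2\bigr)^2$, which is the paper's form; your $\bigl(b^5+ac^4\bigr)^2$ agrees with it only after invoking the quintic relation $b^5+ac^4+ab+a^2c^2=0$, which changes nothing modulo $N$.
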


\begin{proof}
Again we first write these power series as quotients of elements in $S$.
\begin{enumerate}
\item[(1)] The proof of Theorem \ref{theorem4.2} shows that $p_{4,1}\left(\frac{1}{a}\right)=\frac{b^{8}}{a^{4}}(a+b^{4})$, and that $p_{8,3}\left(\frac{1}{a}\right)=\frac{b^{16}}{a^{8}}\left(a+b^{4}\right)^{3}$.
\item[(2)] $p_{16,6}\left(\frac{1}{a}\right)=p_{16,6}p_{2,0}\left(\frac{1}{a}\right)=p_{16,6}\left(\frac{b^{4}}{a^{2}}\right)=\left(p_{8,3}\left(\frac{b^{2}}{a}\right)\right)^{2}$. Now $p_{8,3}\left(\frac{b^{2}}{a}\right)=\frac{1}{a^{8}}p_{8,1}(a)p_{8,0}(a^{4})p_{8,2}(a^{2}b^{2})=\frac{1}{a^{8}}p_{2,1}(a)\left(p_{2,0}(a)\right)^{4}\cdot\left(p_{4,1}(ab)\right)^{2}$. Now $p_{4,3}(ab)=0$, and it follows that $p_{4,1}(ab)=p_{2,1}(ab)=ab+p_{2,0}(ab)=ab+a^{2}c^{2}$. So $p_{8,3}\left(\frac{b^{2}}{a}\right)=\left(\frac{b^{16}}{a^{8}}\right)(a+b^{4})(a^{2}b^{2}+a^{4}c^{4})$.
\end{enumerate}

We can now use the technique of the last section to prove the theorem. It suffices to show that $\left(\frac{b^{8}}{a^{4}}\right)(a+b^{4})$ and $\left(\frac{b^{16}}{a^{8}}\right)(a+b^{4})(a^{2}b^{2}+a^{4}c^{4})$ are in $S$. To prove the second result we take $u$ to be $x_{3}^{16}(x_{1}+x_{3}^{4})(x_{1}^{2}x_{3}^{2}+x_{1}^{4}x_{2}^{4})$, and $v$ to be $x_{1}^{8}$. Macaulay 2 verifies that $(N,v)=(N,u,v)$. So $\phi_{r}(u)/\phi_{r}(v)$ is in $S$, as desired. The first result is proved similarly.
\end{proof}

\begin{lemma}
\label{lemma5.3}
$p_{8,4}\left(\frac{1}{a}\right)+\left(p_{4,2}\left(\frac{1}{c}\right)\right)^{2}=u^{4}$ for some $u$ in $S$.
\end{lemma}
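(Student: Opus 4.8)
The plan is to mimic the proof of Lemma \ref{lemma4.3}, pushing $p_{8,4}$ through the chain $p_{4,0},p_{8,0}$, isolating a fourth power, and then exhibiting its fourth root as a quotient of elements of $S$. First I would record the elementary splitting $p_{8,4}(F)=p_{4,0}(F)+p_{8,0}(F)$ (the exponents $\equiv 0 \pmod 4$ break into those $\equiv 0$ and those $\equiv 4 \pmod 8$, and we are in characteristic 2). Applying this to $F=1/a$ and invoking Lemma \ref{lemma3.10}(2) together with Theorem \ref{theorem5.1} gives $p_{8,4}\left(\frac{1}{a}\right)=\frac{b^{12}}{a^{4}}+b^{8}c^{8}$.

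Next I would compute the second summand. Since $p_{2,0}(c)=a^{4}$, Lemma \ref{lemma3.10}(1),(2), applied with the roles of $a$ and $b$ played by $c$ and $a$, give $p_{2,0}\left(\frac{1}{c}\right)=\frac{a^{4}}{c^{2}}$ and $p_{4,0}\left(\frac{1}{c}\right)=\frac{a^{12}}{c^{4}}$. The companion identity $p_{4,2}(F)=p_{2,0}(F)+p_{4,0}(F)$ then yields $p_{4,2}\left(\frac{1}{c}\right)=\frac{a^{4}}{c^{2}}+\frac{a^{12}}{c^{4}}$, so that $\left(p_{4,2}\left(\frac{1}{c}\right)\right)^{2}=\frac{a^{8}}{c^{4}}+\frac{a^{24}}{c^{8}}$, the cross term dying mod 2.

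Adding the two contributions, the left-hand side of the lemma becomes $\frac{b^{12}}{a^{4}}+b^{8}c^{8}+\frac{a^{8}}{c^{4}}+\frac{a^{24}}{c^{8}}$. Each of the four terms is a fourth power, so by Frobenius the sum equals $u^{4}$ with $u=\frac{b^{3}}{a}+b^{2}c^{2}+\frac{a^{2}}{c}+\frac{a^{6}}{c^{2}}$. This produces the candidate $u$, and the only nonformal step that remains is to verify $u\in S$.

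For that I would clear denominators and appeal to the quintic-relation machinery of Theorem \ref{theorem3.9}. Taking $v=x_{1}x_{2}^{2}$ and $u'=x_{3}^{3}x_{2}^{2}+x_{1}x_{3}^{2}x_{2}^{4}+x_{1}^{3}x_{2}+x_{1}^{7}$ in $\newz/2[x_{1},x_{2},x_{3}]$, one has $\phi_{r}(u')/\phi_{r}(v)=u$ (recall $a=\phi_{r}(x_{1})$, $c=\phi_{r}(x_{2})$, and $b=[4r]=[3r]=\phi_{r}(x_{3})$ since $4\equiv-3\pmod 7$). A Macaulay 2 check that $(N,v)=(N,u',v)$ then places $u$ in $S$ by Theorem \ref{theorem3.9}. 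The main obstacle is precisely this ideal-equality verification: unlike the bookkeeping above it is not transparent by hand, and it is where the relations $R_{i,j}$ genuinely do the work. Once $u\in S$ is confirmed the lemma follows, and it is in this $u^{4}$ form that it will feed---via $p_{16,4}$ and $p_{32,12}$ together with Theorem \ref{theorem5.2}---into the next theorem.
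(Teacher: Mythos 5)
Your argument is correct and is essentially the paper's own proof: the same splitting $p_{8,4}=p_{4,0}+p_{8,0}$ and $p_{4,2}=p_{2,0}+p_{4,0}$, the same evaluation via Lemma \ref{lemma3.10} and Theorem \ref{theorem5.1}, and the same fourth root $u=\frac{b^{3}}{a}+b^{2}c^{2}+\frac{a^{2}}{c}+\frac{a^{6}}{c^{2}}$ whose membership in $S$ is delegated to the Macaulay 2 ideal check. The only cosmetic difference is that the paper first discards the term $b^{2}c^{2}$ (already visibly in $S$) and applies the quotient technique to $\frac{b^{3}c^{2}+a^{3}c+a^{7}}{ac^{2}}$, whereas you keep all four terms in the quotient.
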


\begin{proof}
$p_{8,4}\left(\frac{1}{a}\right)=p_{4,0}\left(\frac{1}{a}\right)+p_{8,0}\left(\frac{1}{a}\right)$. By Lemma \ref{lemma3.10} and Theorem \ref{theorem5.1}, this is $\frac{b^{12}}{a^{4}}+b^{8}c^{8}$. And $p_{4,2}\left(\frac{1}{c}\right)=p_{2,0}\left(\frac{1}{c}\right)+p_{4,0}\left(\frac{1}{c}\right)=\frac{a^{4}}{c^{2}}+\frac{a^{12}}{c^{4}}$. So the left-hand side in the statement of the lemma is the fourth power of $\frac{b^{3}}{a}+b^{2}c^{2}+\frac{a^{2}}{c}+\frac{a^{6}}{c^{2}}$. To show that $\frac{b^{3}}{a}+\frac{a^{2}}{c}+\frac{a^{6}}{c^{2}}$ is in $S$, we write it as a quotient, $\frac{b^{3}c^{2}+a^{3}c+a^{7}}{ac^{2}}$, and use our Macaulay 2 technique.
\end{proof}

\begin{theorem}
\label{theorem5.4}
$p_{16,4}\left(\frac{1}{a}\right)$ and $p_{32,12}\left(\frac{1}{a}\right)$ are in $S$.
\end{theorem}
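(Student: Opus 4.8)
The plan is to deduce both assertions by applying the projections $p_{16,4}$ and $p_{32,12}$ directly to the identity of Lemma \ref{lemma5.3}, in exact parallel with the way Theorem \ref{theorem4.4} was obtained from Lemma \ref{lemma4.3} in the case $l=5$. Two formal properties of the operators drive the computation. The first is the square-doubling rule $p_{2q,2j}(F^2)=(p_{q,j}(F))^2$. The second is that a finer projection absorbs a coarser one of compatible residue; here we use $p_{8,2}\circ p_{4,2}=p_{8,2}$, $p_{16,6}\circ p_{4,2}=p_{16,6}$, $p_{16,4}\circ p_{8,4}=p_{16,4}$ and $p_{32,12}\circ p_{8,4}=p_{32,12}$ (since $4\equiv 4\pmod 8$, $6\equiv 2\pmod 4$, and so on). I will also use that $S$, being stable under the $p_{8,j}$ by Lemma \ref{lemma2.2}, is stable under every $p_{4,j}$ (as $p_{4,j}=p_{8,j}+p_{8,j+4}$), and that $u\in S$ from Lemma \ref{lemma5.3}.

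First I would apply $p_{16,4}$ to the identity $p_{8,4}(1/a)+(p_{4,2}(1/c))^2=u^4$ of Lemma \ref{lemma5.3}. The first term collapses to $p_{16,4}(1/a)$ by absorption. Treating $(p_{4,2}(1/c))^2$ as a square and combining $p_{16,4}(G^2)=(p_{8,2}(G))^2$ with $p_{8,2}p_{4,2}=p_{8,2}$ turns the middle term into $(p_{8,2}(1/c))^2$, while $p_{16,4}(u^4)=(p_{4,1}(u))^4$. Thus $p_{16,4}(1/a)=(p_{8,2}(1/c))^2+(p_{4,1}(u))^4$. Here $(p_{4,1}(u))^4\in S$ since $u\in S$ and $S$ is $p_{4,1}$-stable. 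For the other term, note that under $r\mapsto 2r$ the triple $(a,b,c)=([r],[4r],[2r])$ becomes $(c,a,b)$, because $8r\equiv r\pmod 7$; so Theorem \ref{theorem5.1}, which reads $p_{8,2}(1/a)=(a^2+b^8)c^8$, gives $p_{8,2}(1/c)=(c^2+a^8)b^8\in S$. Hence $p_{16,4}(1/a)\in S$.

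The treatment of $p_{32,12}(1/a)$ is identical in structure. Applying $p_{32,12}$ to the same identity, absorption gives $p_{32,12}(p_{8,4}(1/a))=p_{32,12}(1/a)$; the square rule $p_{32,12}(G^2)=(p_{16,6}(G))^2$ together with $p_{16,6}p_{4,2}=p_{16,6}$ converts the middle term into $(p_{16,6}(1/c))^2$; and $p_{32,12}(u^4)=(p_{8,3}(u))^4$. Therefore $p_{32,12}(1/a)=(p_{16,6}(1/c))^2+(p_{8,3}(u))^4$. Now $(p_{8,3}(u))^4\in S$ since $u\in S$, and $p_{16,6}(1/c)\in S$ by Theorem \ref{theorem5.2} applied with $r$ replaced by $2r$ (so that $p_{16,6}(1/a')$ becomes $p_{16,6}(1/c)$). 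Both quantities lying in $S$, we conclude $p_{32,12}(1/a)\in S$.

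I do not expect a genuine obstacle here: all the substantive content — the quintic theta relation and the Macaulay 2 verification establishing $u\in S$ — is already packaged inside Lemma \ref{lemma5.3} and Theorems \ref{theorem5.1} and \ref{theorem5.2}, so what remains is purely formal. The one thing to watch is the projection bookkeeping: correctly invoking the absorption and square-doubling identities, and faithfully tracking the relabeling $(a,b,c)\mapsto(c,a,b)$ induced by $r\mapsto 2r$ so that each cited formula is applied to the intended series.
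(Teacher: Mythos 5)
Your proposal is correct and follows essentially the same route as the paper: apply $p_{16,4}$ and $p_{32,12}$ to the identity of Lemma \ref{lemma5.3}, obtain $p_{16,4}\left(\frac{1}{a}\right)+\left(p_{8,2}\left(\frac{1}{c}\right)\right)^{2}=\left(p_{4,1}(u)\right)^{4}$ and $p_{32,12}\left(\frac{1}{a}\right)+\left(p_{16,6}\left(\frac{1}{c}\right)\right)^{2}=\left(p_{8,3}(u)\right)^{4}$, and invoke Theorems \ref{theorem5.1} and \ref{theorem5.2} with $r$ replaced by $2r$ together with the stability of $S$ under $p_{4,1}$ and $p_{8,3}$. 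Your extra care with the absorption and square-doubling bookkeeping, and with the relabeling $(a,b,c)\mapsto(c,a,b)$, only makes explicit what the paper leaves implicit.
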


\begin{proof}
Applying $p_{16,4}$ to the identity of Lemma \ref{lemma5.3} we find that $p_{16,4}\left(\frac{1}{a}\right)+\left(p_{8,2}\left(\frac{1}{c}\right)\right)^{2}=\left(p_{4,1}(u)\right)^{4}$. Now Theorem \ref{theorem5.1} (with $r$ replaced by $2r$) shows that $p_{8,2}\left(\frac{1}{c}\right)$ is in $S$. Since $S$ is stable under $p_{4,1}$, $p_{16,4}\left(\frac{1}{a}\right)$ is in $S$. Similarly, applying $p_{32,12}$ to the identity, we find that $p_{32,12}\left(\frac{1}{a}\right)+\left(p_{16,6}\left(\frac{1}{c}\right)\right)^{2}=\left(p_{8,3}(u)\right)^{4}$. Theorem \ref{theorem5.2} shows that $p_{16,6}\left(\frac{1}{c}\right)$ is in $S$, and we use the fact that $p_{8,3}$ stabilizes $S$.
\end{proof}

\section{$\bm{l=9}$}
\label{section6}

Now $l=9$. Then $m=4$ and $N$ is generated by $x_{1}^{5}+x_{4}^{4}x_{2}+x_{1}x_{2}+x_{3}^{2}x_{4}^{2}$, $x_{2}^{5}+x_{1}^{4}x_{4}+x_{2}x_{4}+x_{3}^{2}x_{1}^{2}$, $x_{4}^{5}+x_{2}^{4}x_{1}+x_{4}x_{1}+x_{3}^{2}x_{2}^{2}$, $x_{1}^{4}x_{3}+x_{3}^{4}x_{2}+x_{2}x_{3}+x_{2}^{2}x_{4}^{2}$, $x_{2}^{4}x_{3}+x_{3}^{4}x_{4}+x_{4}x_{3}+x_{4}^{2}x_{1}^{2}$, and $x_{4}^{4}x_{3}+x_{3}^{4}x_{1}+x_{1}x_{3}+x_{1}^{2}x_{2}^{2}$. Let $r$ be 1, 2 or 4, $a=[r]$, $b=[4r]$, $c=[2r]$ and $d=[3r]=[6r]$. Then $p_{2,0}(d)=d^{4}$, and $p_{2,0}$ takes $a$, $b$ and $c$ to $b^{4}$, $c^{4}$ and $a^{4}$. Lemma \ref{lemma3.5} shows that $p_{2,0}$ takes $ab$, $bc$ and $ac$ to $c^{2}d^{2}$, $a^{2}d^{2}$ and $b^{2}d^{2}$, and that it takes $ad$, $bd$ and $cd$ to $a^{2}c^{2}$, $a^{2}b^{2}$ and $b^{2}c^{2}$. We'll prove that $B(a)$ has relative density 0 in $U^{*}$ by showing that each of $p_{4,2}\left(\frac{1}{a}\right)$, $p_{8,3}\left(\frac{1}{a}\right)$, $p_{8,5}\left(\frac{1}{a}\right)$, $p_{16,4}\left(\frac{1}{a}\right)$, $p_{16,8}\left(\frac{1}{a}\right)$, $p_{32,0}\left(\frac{1}{a}\right)$, $p_{64,16}\left(\frac{1}{a}\right)$ and $p_{128,48}\left(\frac{1}{a}\right)$ is in $S$.

\begin{theorem}
\label{theorem6.1}
$p_{4,2}\left(\frac{1}{a}\right)$, $p_{8,3}\left(\frac{1}{a}\right)$, $p_{8,5}\left(\frac{1}{a}\right)$, $p_{16,4}\left(\frac{1}{a}\right)$ and $p_{16,8}\left(\frac{1}{a}\right)$ are in $S$.
\end{theorem}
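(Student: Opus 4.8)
The plan is to follow the template set for $l=5$ and $l=7$ in Theorems~\ref{theorem4.2} and~\ref{theorem5.2}. For each of the five projections I would (i) produce an explicit expression as a quotient $\phi_r(u)/\phi_r(v)$ of images of polynomials $u,v\in\newz/2[x_1,x_2,x_3,x_4]$, and then (ii) invoke Theorem~\ref{theorem3.9}, checking with Macaulay 2 that $(N,v)=(N,u,v)$ to conclude that the quotient lies in $S$. Throughout I use the dictionary $\phi_r(x_1)=a$, $\phi_r(x_2)=c$, $\phi_r(x_3)=d$, $\phi_r(x_4)=b$. Once all five are shown to lie in $S$, Corollary~\ref{corollary3.4} together with the $l=9$ row of the introductory table yields relative density $0$ for $B(a)$ in the classes $2\bmod 4$, $3,5\bmod 8$ and $4,8\bmod 16$.

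To obtain the quotients I would write $p_{q,j}(1/a)=a^{-q}\,p_{q,j}(a^{q-1})$, factor $a^{q-1}=a\cdot a^2\cdot a^4\cdots$, and expand using the product rule for $p_{q,j}(FG)$. Each factor $p_{q,i}(a^{2^s})$ is read off from the support of $a,a^2,a^4$ modulo the relevant power of $2$ (odd squares are $\equiv 1\bmod 8$, while $\equiv 1,9\bmod16$; $n\equiv 2\bmod4$ forces $n^2\equiv 4\bmod16$; etc.), combined with $p_{2,0}(a)=b^4$, $p_{2,1}(a)=a+b^4$, $p_{2,0}(b)=c^4$, $p_{2,0}(c)=a^4$, $p_{2,0}(d)=d^4$, with Lemma~\ref{lemma3.10}, and with Lemma~\ref{lemma3.5} for the cross terms (so that, for instance, $p_{8,0}(1/a)=b^8c^8d^8/a^8$ since $p_{2,0}(ab)=c^2d^2$). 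I expect this to give $p_{4,2}(1/a)=(b^4/a^4)(a^2+b^8)$, $p_{8,3}(1/a)=(b^{16}/a^8)(a+b^4)^3$ exactly as for $l=5,7$, and $p_{8,5}(1/a)=(b^8/a^8)(a+b^4)^5$. For the two degree-$16$ projections I would avoid a brute-force expansion modulo $16$ by first descending to the eighth level via the squaring identity $p_{2q,2j}(F^2)=(p_{q,j}(F))^2$: since $p_{16,4}=p_{16,4}\circ p_{4,0}$ and $p_{4,0}(1/a)=b^{12}/a^4=(b^6/a^2)^2$, one gets $p_{16,4}(1/a)=(p_{8,2}(b^6/a^2))^2$, and similarly $p_{16,8}(1/a)=(p_{8,4}(b^2/a))^2$; each inner eighth-level projection is then expanded as above. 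Should these still prove unwieldy, the alternative is to relate $p_{16,4}$ and $p_{16,8}$ to lower projections of $1/b$ or $1/c$ by a doubling identity in the spirit of Lemmas~\ref{lemma4.3} and~\ref{lemma5.3}.

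I expect the real obstacle to lie not in this bookkeeping but in step (ii). As noted after Theorem~\ref{theorem5.4}, for $l=9$ the ideal $N$ is \emph{strictly smaller} than $\ker\phi_r$. Theorem~\ref{theorem3.9} supplies only a sufficient condition, so the test $(N,v)=(N,u,v)$ is strictly stronger than the actual truth $\phi_r(u)\in(\phi_r(v))$ and may well fail for the first natural choice of $(u,v)$ even when the quotient genuinely lies in $S$. The remedy is to clear denominators more aggressively, replacing $(u,v)$ by $(wu,wv)$ for a suitable $w\in\newz/2[x_1,\dots,x_4]$, or, failing that, to adjoin to $N$ further explicit elements of $\ker\phi_r$ until the Gröbner-basis computation certifies the equality. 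Exhibiting, for each of the five projections, a pair $(u,v)$ (possibly after such a modification) for which Macaulay 2 verifies $(N,v)=(N,u,v)$ is the crux of the argument.
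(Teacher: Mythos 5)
Your proposal follows the paper's proof essentially verbatim: the same quotient expressions $\left(\frac{b^{4}}{a^{4}}\right)(a^{2}+b^{8})$, $\left(\frac{b^{16}}{a^{8}}\right)(a+b^{4})^{3}$ and $\left(\frac{b^{8}}{a^{8}}\right)(a+b^{4})^{5}$ for the mod-$4$ and mod-$8$ projections, a descent via the squaring identity for the two mod-$16$ projections (the paper descends to $\left(p_{4,1}\left(\frac{b^{3}}{a}\right)\right)^{4}$ and, using $p_{8,0}\left(\frac{1}{a}\right)=\left(\frac{bcd}{a}\right)^{8}$, to $\left(p_{2,1}\left(\frac{bcd}{a}\right)\right)^{8}$, rather than your eighth-level versions, but these are equivalent), and the same appeal to Theorem~\ref{theorem3.9} with a Macaulay~2 check of $(N,v)=(N,u,v)$. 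Your caution that $N\subsetneq\ker\phi_{r}$ when $l=9$ might defeat the Gr\"obner test is reasonable in principle, but the paper reports that the standard procedure succeeds unmodified for the pairs arising here (e.g.\ $u=x_{1}x_{4}^{3}x_{2}^{2}+x_{2}^{3}x_{3}^{3}$, $v=x_{1}^{2}$).
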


\begin{proof}
Again we first write these power series as quotients of elements in $S$.
\begin{enumerate}
\item[(1)] The proof of Theorem \ref{theorem4.2} shows that $p_{4,2}\left(\frac{1}{a}\right)=\left(\frac{b^{4}}{a^{4}}\right)\left(a^{2}+b^{8}\right)$ while $p_{8,3}\left(\frac{1}{a}\right)=\left(\frac{b^{16}}{a^{8}}\right)\left(a+b^{4}\right)^{3}$.
\item[(2)] $p_{8,5}\left(\frac{1}{a}\right)=\left(\frac{1}{a^{8}}\right)p_{8,1}(a)p_{8,0}(a^{2})p_{8,4}(a^{4})=\left(\frac{1}{a^{8}}\right)p_{2,1}(a)\left(p_{4,0}(a)\right)^{2}\left(p_{2,1}(a)\right)^{4}=\frac{b^{8}}{a^{8}}(a+b^{4})^{5}$.
\item[(3)] $p_{16,4}\left(\frac{1}{a}\right)=p_{16,4}p_{4,0}\left(\frac{1}{a}\right)=\left(p_{4,1}\left(\frac{b^{3}}{a}\right)\right)^{4}$. And $p_{4,1}\left(\frac{b^{3}}{a}\right)=\left(\frac{1}{a^{4}}\right)p_{4,1}(ab) p_{4,0}(a^{2}b^{2})\linebreak =\left(\frac{1}{a^{4}}\right)p_{2,1}(ab)\left(p_{2,0}(ab)\right)^{2}=\frac{c^{4}d^{4}}{a^{4}}(ab+c^{2}d^{2})$.
\item[(4)] $p_{8,0}\left(\frac{1}{a}\right)=\left(\frac{b^{8}}{a^{8}}\right)\left(p_{2,0}(ab)\right)^{4}=\left(\frac{bcd}{a}\right)^{8}$. If follows that $p_{16,8}\left(\frac{1}{a}\right)=p_{16,8}p_{8,0}\left(\frac{1}{a}\right)=\left(p_{2,1}\left(\frac{bcd}{a}\right)\right)^{8}$.  Now $p_{2,1}\left(\frac{bcd}{a}\right)=\left(\frac{1}{a^{2}}\right)p_{2,1}\left((ab)(cd)\right)=\left(\frac{1}{a^{2}}\right)
\linebreak \left((c^{2}d^{2})(cd+b^{2}c^{2})+ (ab+c^{2}d^{2})(b^{2}c^{2})\right)=\left(\frac{1}{a^{2}}\right)(ab^{3}c^{2}+c^{3}d^{3})$.
\end{enumerate}

We conclude with our by now standard computer procedure. For example to show that $\left(\frac{1}{a^{2}}\right)(ab^{3}c^{2}+c^{3}d^{3})$ is in $S$ we set $u=x_{1}x_{4}^{3}x_{2}^{2}+x_{2}^{3}x_{3}^{3}$, $v=x_{1}^{2}$ and use Macaulay 2 to verify that $(N,v)=(N,u,v)$.
\end{proof}

\begin{lemma}
\label{lemma6.2}
$p_{16,0}\left(\frac{1}{a}\right)$ is the sixteenth power of $\frac{d(ab^{2}+bc^{2}+ca^{2})}{a}$.
\end{lemma}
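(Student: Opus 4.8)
The plan is to obtain $p_{16,0}$ from the formula for $p_{8,0}$ already established and then to recognise the result as a sixteenth power by extracting a square root with the help of the quintic relations. First I would write $p_{16,0}\!\left(\frac1a\right)=p_{16,0}\,p_{8,0}\!\left(\frac1a\right)$ and recall from item (4) in the proof of Theorem \ref{theorem6.1} that $p_{8,0}\!\left(\frac1a\right)=\left(\frac{bcd}{a}\right)^{8}$. Iterating the rule $p_{2q,2j}\!\left(F^{2}\right)=\left(p_{q,j}(F)\right)^{2}$ three times gives $p_{16,0}\!\left(F^{8}\right)=\left(p_{2,0}(F)\right)^{8}$ for every $F$, so that $p_{16,0}\!\left(\frac1a\right)=\left(p_{2,0}\!\left(\frac{bcd}{a}\right)\right)^{8}$. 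Thus the whole computation reduces to evaluating the single projection $p_{2,0}\!\left(\frac{bcd}{a}\right)$ and seeing that it is a square.

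Next I would compute that projection. Since $\frac{1}{a^{2}}$ lies in $L^{[2]}$, $L^{[2]}$-linearity gives $p_{2,0}\!\left(\frac{bcd}{a}\right)=\frac{1}{a^{2}}\,p_{2,0}(abcd)$, and I would evaluate $p_{2,0}(abcd)=p_{2,0}\!\left((ab)(cd)\right)$ by the product rule, using the values $p_{2,0}(ab)=c^{2}d^{2}$ and $p_{2,0}(cd)=b^{2}c^{2}$ recorded at the start of this section (so that $p_{2,1}(ab)=ab+c^{2}d^{2}$ and $p_{2,1}(cd)=cd+b^{2}c^{2}$). The two resulting $b^{2}c^{4}d^{2}$ terms cancel and one is left with $p_{2,0}(abcd)=abcd+ab^{3}c^{2}+c^{3}d^{3}$, hence $p_{2,0}\!\left(\frac{bcd}{a}\right)=\frac{1}{a^{2}}\!\left(abcd+ab^{3}c^{2}+c^{3}d^{3}\right)$.

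It then remains to identify this with a perfect square, i.e.\ to prove in $S$ the identity
\[
abcd+ab^{3}c^{2}+c^{3}d^{3}=d^{2}\!\left(ab^{2}+bc^{2}+ca^{2}\right)^{2}=d^{2}\!\left(a^{2}b^{4}+b^{2}c^{4}+c^{2}a^{4}\right).
\]
I would settle this by the Macaulay 2 technique of Theorem \ref{theorem3.9}, reducing the $x$-polynomial corresponding to the difference of the two sides modulo the ideal $N$, exactly as in the other computations of this section. As a check on the shape of the answer, substituting the three generators of $N$ that contain $d$ only through a term $x_{3}^{2}x_{4}^{2}$, $x_{3}^{2}x_{1}^{2}$, or $x_{3}^{2}x_{2}^{2}$ (these express $b^{2}d^{2}$, $a^{2}d^{2}$, $c^{2}d^{2}$ with $d$-free right-hand sides) collapses the right-hand side to the symmetric form $a^{5}b+b^{5}c+c^{5}a$, so the identity to be verified is equally $abcd+ab^{3}c^{2}+c^{3}d^{3}=a^{5}b+b^{5}c+c^{5}a$. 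Granting it, $p_{2,0}\!\left(\frac{bcd}{a}\right)=\left(\frac{d(ab^{2}+bc^{2}+ca^{2})}{a}\right)^{2}$, and raising to the eighth power gives the asserted value of $p_{16,0}\!\left(\frac1a\right)$.

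The main obstacle is precisely that last identity. Unlike the purely formal manipulations of the projection operators, it genuinely uses the relations among the theta series; and because for $l=9$ the ideal $N$ is strictly smaller than $\ker\phi_{r}$, the delicate point is to confirm that the difference really does lie in $N$ (not merely in $\ker\phi_{r}$), which is what the Gr\"obner-basis computation is needed to guarantee.
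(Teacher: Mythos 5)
Your proposal is correct and takes essentially the same route as the paper: both reduce $p_{16,0}\!\left(\frac{1}{a}\right)$ to the eighth power of $p_{2,0}\!\left(\frac{bcd}{a}\right)=\frac{1}{a^{2}}\left(abcd+ab^{3}c^{2}+c^{3}d^{3}\right)$ and then verify the identity $abcd+ab^{3}c^{2}+c^{3}d^{3}=d^{2}\left(a^{2}b^{4}+b^{2}c^{4}+c^{2}a^{4}\right)$ by a Macaulay~2 check that the corresponding polynomial lies in $N$. (The paper phrases the computation as $(N,u)=N$ for the very polynomial $u$ you describe, so even the certificate is the same.)
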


\begin{proof}
Arguing as in the above calculation of $p_{16,8}\left(\frac{1}{a}\right)$ we find that $p_{16,0}\left(\frac{1}{a}\right)$ is the eighth power of $p_{2,0}\left(\frac{bcd}{a}\right)=\frac{abcd}{a^{2}}+\frac{ab^{3}c^{2}+c^{3}d^{3}}{a^{2}}$. So it suffices to show that $(abcd+ab^{3}c^{2}+c^{3}d^{3})+d^{2}(a^{2}b^{4}+b^{2}c^{4}+c^{2}a^{4})=0$. To do this, set $u=(x_{1}x_{4}x_{2}x_{3}+x_{1}x_{4}^{3}x_{2}^{2}+x_{2}^{3}x_{3}^{3})+x_{3}^{2}(x_{1}^{2}x_{4}^{4}+x_{4}^{2}x_{2}^{4}+x_{2}^{2}x_{1}^{4})$. Macaulay 2 shows that $(N,u)=N$. So $u$ is in $N$ and applying $\phi_{r}$ gives the result.
\end{proof}

\begin{lemma}
\label{lemma6.3}
$p_{16,0}\left(\frac{1}{a}\right)+\left(p_{8,4}\left(\frac{1}{b}\right)\right)^{4}=u^{16}$ for some $u$ in $S$.
\end{lemma}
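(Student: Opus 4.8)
The plan is to imitate the proofs of Lemmas \ref{lemma4.3} and \ref{lemma5.3}: I write each of the two terms on the left as an explicit rational expression in $a,b,c,d$, use that sixteenth powers add in characteristic $2$ to collapse their sum into a single sixteenth power $u^{16}$, and then invoke Theorem \ref{theorem3.9} together with a Macaulay 2 computation to see that the base $u$ lies in $S$.

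For the first term there is nothing to do: Lemma \ref{lemma6.2} already gives $p_{16,0}(1/a)=w^{16}$ with $w=\frac{d(ab^{2}+bc^{2}+ca^{2})}{a}$. For the second I compute $p_{8,4}(1/b)$. Since $p_{8,4}=p_{4,0}+p_{8,0}$, I apply Lemma \ref{lemma3.10} to the series $b$, whose doubling relation is $p_{2,0}(b)=c^{4}$: part (2) gives $p_{4,0}(1/b)=\frac{c^{12}}{b^{4}}$, and part (3) gives $p_{8,0}(1/b)=\frac{c^{8}}{b^{8}}(p_{2,0}(bc))^{4}$. Using $p_{2,0}(bc)=a^{2}d^{2}$ (recorded at the start of this section) this is $\left(\frac{acd}{b}\right)^{8}$, so $p_{8,4}(1/b)=\left(\frac{c^{3}}{b}\right)^{4}+\left(\frac{acd}{b}\right)^{8}$ and hence $\left(p_{8,4}(1/b)\right)^{4}=\left(\frac{c^{3}}{b}\right)^{16}+\left(\frac{a^{2}c^{2}d^{2}}{b^{2}}\right)^{16}$.

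Adding the two contributions and using additivity of sixteenth powers,
\[
p_{16,0}\!\left(\frac{1}{a}\right)+\left(p_{8,4}\!\left(\frac{1}{b}\right)\right)^{4}
=w^{16}+\left(\frac{c^{3}}{b}\right)^{16}+\left(\frac{a^{2}c^{2}d^{2}}{b^{2}}\right)^{16}
=\left(w+\frac{c^{3}}{b}+\frac{a^{2}c^{2}d^{2}}{b^{2}}\right)^{16},
\]
so the lemma holds with $u=w+\frac{c^{3}}{b}+\frac{a^{2}c^{2}d^{2}}{b^{2}}$, provided this $u$ lies in $S$.

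To check $u\in S$ I clear denominators and apply the technique of Theorem \ref{theorem3.9}. With the identification $x_{1},x_{2},x_{3},x_{4}\mapsto a,c,d,b$ under $\phi_{r}$, we have $u=\phi_{r}(P)/\phi_{r}(Q)$ where $Q=x_{1}x_{4}^{2}$ and
\[
P=x_{1}x_{3}x_{4}^{4}+x_{2}^{2}x_{3}x_{4}^{3}+x_{1}^{2}x_{2}x_{3}x_{4}^{2}+x_{1}x_{2}^{3}x_{4}+x_{1}^{3}x_{2}^{2}x_{3}^{2}.
\]
By Theorem \ref{theorem3.9} it suffices to have Macaulay 2 confirm that $(N,Q)=(N,P,Q)$. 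This last step is the only real obstacle: in contrast to the case $l=5$, where the quintic relation simplified the analogous $u$ to $a$ by hand, here I expect no clean hand-simplification, and the membership $P\in(N,Q)$ rests on the Gr\"obner-basis computation.
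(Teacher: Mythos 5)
Your proposal is correct and follows essentially the same route as the paper: you compute $p_{8,4}(1/b)=\left(\frac{c^{3}}{b}\right)^{4}+\left(\frac{acd}{b}\right)^{8}$ via Lemma \ref{lemma3.10}, combine with Lemma \ref{lemma6.2} to identify $u=\frac{d(ab^{2}+bc^{2}+ca^{2})}{a}+\frac{c^{3}}{b}+\frac{a^{2}c^{2}d^{2}}{b^{2}}$, and defer $u\in S$ to the Macaulay 2 ideal-membership check, exactly as the paper does. Your explicit polynomials $P$ and $Q$ with $Q=x_{1}x_{4}^{2}$ are a correct clearing of denominators under the identification $x_{1},x_{2},x_{3},x_{4}\mapsto a,c,d,b$.
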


\begin{proof}
$p_{8,4}\left(\frac{1}{b}\right)=p_{8,0}\left(\frac{1}{b}\right)+p_{4,0}\left(\frac{1}{b}\right)$. Using Lemma \ref{lemma3.10} we find that this is $\left(\frac{acd}{b}\right)^{8}+\left(\frac{c^{3}}{b}\right)^{4}$. So the left-hand side in the statement of the lemma is the sixteenth power of $u=\frac{d(ab^{2}+bc^{2}+ca^{2})}{a}+\frac{a^{2}c^{2}d^{2}}{b^{2}}+\frac{c^{3}}{b}$. It remains to show that this $u$ is in $S$. This is established using Macaulay 2 in the usual way.
\end{proof}

\begin{theorem}
\label{theorem6.4}
$p_{32,0}\left(\frac{1}{a}\right)$ and $p_{64,16}\left(\frac{1}{a}\right)$ are in $S$.
\end{theorem}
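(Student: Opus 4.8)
The plan is to imitate the proofs of Theorems \ref{theorem4.4} and \ref{theorem5.4}: apply the two projection operators $p_{32,0}$ and $p_{64,16}$ to the identity of Lemma \ref{lemma6.3},
\[
p_{16,0}\!\left(\tfrac{1}{a}\right) + \left(p_{8,4}\!\left(\tfrac{1}{b}\right)\right)^{4} = u^{16}, \qquad u \in S ,
\]
and read off $p_{32,0}(1/a)$ and $p_{64,16}(1/a)$ from the resulting identities. The two bookkeeping rules I would rely on are the composition rule $p_{Q,J}\circ p_{q,j}=p_{Q,J}$ when $q\mid Q$ and $J\equiv j\pmod{q}$ (and $=0$ otherwise), together with $p_{2q,2j}\!\left(F^{2}\right)=\left(p_{q,j}(F)\right)^{2}$. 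I would also use that $S$ is stable under $p_{2,0}$ and under $p_{4,1}$, each of these being a sum of the operators $p_{8,j}$ that stabilize $S$ by Lemma \ref{lemma2.2}.

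First I would handle $p_{32,0}$. Since $0\equiv 0\pmod{16}$, the first term gives $p_{32,0}(p_{16,0}(1/a))=p_{32,0}(1/a)$. The cross term drops out: $\left(p_{8,4}(1/b)\right)^{4}$ lies in $x^{16}L^{[32]}$, which $p_{32,0}$ annihilates (equivalently, iterating the squaring rule gives $\left(p_{8,0}(p_{8,4}(1/b))\right)^{4}$ and $p_{8,0}\circ p_{8,4}=0$). Finally four applications of the squaring rule give $p_{32,0}\!\left(u^{16}\right)=\left(p_{2,0}(u)\right)^{16}$. Hence $p_{32,0}(1/a)=\left(p_{2,0}(u)\right)^{16}$, which is in $S$ because $u\in S$ and $S$ is stable under $p_{2,0}$.

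Next I would treat $p_{64,16}$. Here $16\equiv 0\pmod{16}$, so the first term yields $p_{64,16}(1/a)$. This time the cross term survives: iterating the squaring rule gives $p_{64,16}\!\left(\left(p_{8,4}(1/b)\right)^{4}\right)=\left(p_{16,4}(p_{8,4}(1/b))\right)^{4}=\left(p_{16,4}(1/b)\right)^{4}$, using $4\equiv 4\pmod{8}$, while $p_{64,16}\!\left(u^{16}\right)=\left(p_{4,1}(u)\right)^{16}$. So $p_{64,16}(1/a)=\left(p_{16,4}(1/b)\right)^{4}+\left(p_{4,1}(u)\right)^{16}$. The term $\left(p_{4,1}(u)\right)^{16}$ is in $S$ since $S$ is stable under $p_{4,1}$; and $p_{16,4}(1/b)$ is in $S$ by Theorem \ref{theorem6.1} applied with $r$ replaced by $4r$ (note $b=[4r]$, and $[4r]$ is again one of $[1],[2],[4]$, with $4r$ prime to $l$), so $\left(p_{16,4}(1/b)\right)^{4}\in S$. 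Therefore $p_{64,16}(1/a)\in S$.

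The main obstacle is purely the projection bookkeeping: one must track exactly which of the three terms survives each operator and with what residue, in particular noticing the contrast that the cross term vanishes under $p_{32,0}$ but persists under $p_{64,16}$. Once the doubling and composition rules are applied carefully and the substitution $r\mapsto 4r$ in Theorem \ref{theorem6.1} is justified, the conclusion is immediate; no new modular-form input or Macaulay 2 computation beyond Lemma \ref{lemma6.3} and Theorem \ref{theorem6.1} is needed.
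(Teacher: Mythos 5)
Your proposal is correct and follows essentially the same route as the paper: the paper likewise applies $p_{32,0}$ and $p_{64,16}$ to the identity of Lemma \ref{lemma6.3}, obtaining $p_{32,0}\left(\frac{1}{a}\right)=\left(p_{2,0}(u)\right)^{16}$ and $p_{64,16}\left(\frac{1}{a}\right)+\left(p_{16,4}\left(\frac{1}{b}\right)\right)^{4}=\left(p_{4,1}(u)\right)^{16}$, and then invokes Theorem \ref{theorem6.1} for $p_{16,4}\left(\frac{1}{b}\right)$. Your write-up merely makes explicit the projection bookkeeping and the substitution $r\mapsto 4r$ that the paper leaves implicit.
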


\begin{proof}
Applying $p_{32,0}$ to the identity of Lemma \ref{lemma6.3} we find that $p_{32,0}\left(\frac{1}{a}\right)=\left(p_{2,0}(u)\right)^{16}$ with $u$ in $S$. Applying $p_{64,16}$ to the identity we find that $p_{64,16}\left(\frac{1}{a}\right)+\left(p_{16,4}\left(\frac{1}{b}\right)\right)^{4}=\left(p_{4,1}(u)\right)^{16}$. But Theorem \ref{theorem6.1} shows that $p_{16,4}\left(\frac{1}{b}\right)$ is in $S$.
\end{proof}

\begin{theorem}
\label{theorem6.5}
$p_{32,12}\left(\frac{1}{a}\right)$ and $p_{128,48}\left(\frac{1}{a}\right)$ are in $S$.
\end{theorem}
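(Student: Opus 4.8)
The plan is to reduce each of the two projections to a power of a projection that is already under control, closing the remaining gap with the Macaulay~2 technique of Theorem~\ref{theorem3.9} in the first case and with the recursion supplied by Lemma~\ref{lemma6.3} in the second.

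First I would treat $p_{32,12}(1/a)$. Since $12\equiv 4\pmod 8$ we have $p_{32,12}=p_{32,12}\circ p_{8,4}$, so it suffices to understand $p_{8,4}(1/a)$. By Lemma~\ref{lemma3.10}(2), $p_{4,0}(1/a)=b^{12}/a^4$, and in the proof of Theorem~\ref{theorem6.1} we found $p_{8,0}(1/a)=(bcd/a)^8$; hence
\[
p_{8,4}\left(\tfrac{1}{a}\right)=p_{4,0}\left(\tfrac{1}{a}\right)+p_{8,0}\left(\tfrac{1}{a}\right)=\left(\tfrac{b^{3}}{a}\right)^{4}+\left(\tfrac{(bcd)^{2}}{a^{2}}\right)^{4}=\left(\tfrac{b^{3}}{a}+\tfrac{b^{2}c^{2}d^{2}}{a^{2}}\right)^{4}.
\]
Applying $p_{32,12}$ and using $p_{32,12}(F^4)=(p_{8,3}(F))^4$, together with the fact that $p_{8,3}$ annihilates the square $(bcd/a)^2$, I obtain $p_{32,12}(1/a)=(p_{8,3}(b^{3}/a))^{4}$. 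So everything comes down to showing $p_{8,3}(b^{3}/a)\in S$. Here I would write $b^{3}/a=a^{7}b^{3}/a^{8}$; since $a^{8}$ is an eighth power, $p_{8,3}(b^{3}/a)=a^{-8}\,p_{8,3}(a^{7}b^{3})$, and $p_{8,3}(a^{7}b^{3})$ lies in $S$ because $a^{7}b^{3}\in S$ and $S$ is stable under $p_{8,3}$ by Lemma~\ref{lemma2.2}. Expanding $p_{8,3}(a^{7}b^{3})$ by the product formula for $p_{8,j}$ and the values of $p_{8,j}$ on the individual theta series exhibits it as $\phi_{r}(u)$ for an explicit polynomial $u$, so that $p_{8,3}(b^{3}/a)=\phi_{r}(u)/\phi_{r}(x_{1}^{8})$; I would then have Macaulay~2 verify $(N,x_{1}^{8})=(N,u,x_{1}^{8})$ and conclude $p_{32,12}(1/a)\in S$ via Theorem~\ref{theorem3.9}.

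For $p_{128,48}(1/a)$ I would avoid any new direct computation and instead bootstrap off the first part. The substitution $r\mapsto 4r$ carries $a=[r]$ to $[4r]=b$ (and merely permutes the admissible residues), so $p_{32,12}(1/a)\in S$ immediately yields $p_{32,12}(1/b)\in S$. Now apply $p_{128,48}$ to the identity of Lemma~\ref{lemma6.3}, $p_{16,0}(1/a)+(p_{8,4}(1/b))^{4}=u^{16}$ with $u\in S$. Because $48\equiv 0\pmod{16}$ we have $p_{128,48}\circ p_{16,0}=p_{128,48}$; because $p_{128,48}(F^{4})=(p_{32,12}(F))^{4}$ and $12\equiv 4\pmod 8$, the middle term becomes $(p_{32,12}(1/b))^{4}$; and $p_{128,48}(u^{16})=(p_{8,3}(u))^{16}$. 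Hence
\[
p_{128,48}\left(\tfrac{1}{a}\right)=\left(p_{32,12}\left(\tfrac{1}{b}\right)\right)^{4}+\left(p_{8,3}(u)\right)^{16}.
\]
The first summand is in $S$ by the previous paragraph, and the second is in $S$ since $u\in S$ and $S$ is $p_{8,3}$-stable; therefore $p_{128,48}(1/a)\in S$.

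The only genuinely laborious step, and the one I expect to be the main obstacle, is showing $p_{8,3}(b^{3}/a)\in S$: it requires grinding out $p_{8,3}(a^{7}b^{3})$ as an explicit polynomial in the $[j]$ and then checking the ideal equality $(N,x_{1}^{8})=(N,u,x_{1}^{8})$ on the computer. Everything else is a formal manipulation of the projection operators, and the $p_{128,48}$ half is essentially free once $p_{32,12}(1/a)\in S$ is in hand.
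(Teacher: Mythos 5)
Your proposal is correct and follows essentially the same route as the paper: reduce $p_{32,12}(1/a)$ to $\left(p_{8,3}\left(b^{3}/a\right)\right)^{4}$, express $p_{8,3}\left(b^{3}/a\right)$ as a quotient of an element of $S$ by $a^{8}$ and finish with the Macaulay~2 ideal check, then obtain $p_{128,48}(1/a)$ by applying $p_{128,48}$ to the identity of Lemma~\ref{lemma6.3} exactly as you describe. The only cosmetic differences are that the paper reaches $\left(p_{8,3}\left(b^{3}/a\right)\right)^{4}$ via $p_{32,12}p_{4,0}$ rather than via $p_{8,4}$, and simplifies $p_{8,3}(a^{7}b^{3})$ by hand before invoking the computer.
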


\begin{proof}
We show how the second result follows from the first. Applying $p_{128,48}$ to the identity of Lemma \ref{lemma6.3} we find that $p_{128,48}\left(\frac{1}{a}\right)+\left(p_{32,12}\left(\frac{1}{b}\right)\right)^{4}=\left(p_{8,3}(u)\right)^{16}$. Since $p_{32,12}\left(\frac{1}{b}\right)$ is in $S$ and $p_{8,3}$ stabilizes $S$ we get the second result. To prove the first result we once again express our element as a quotient of two elements of $S$. $p_{32,12}\left(\frac{1}{a}\right)=p_{32,12}p_{4,0}\left(\frac{1}{a}\right)=p_{32,12}\left(\frac{b^{12}}{a^{4}}\right)=\left(p_{8,3}\left(\frac{b^{3}}{a}\right)\right)^{4}$. So it's enough to show that $p_{8,3}\left(\frac{b^{3}}{a}\right)$ is in $S$. Now $p_{8,3}\left(\frac{b^{3}}{a}\right)=\left(\frac{1}{a^{8}}\right)p_{8,3}\left((a^{2}b^{2})(ab)(a^{4})\right)=\left(\frac{1}{a^{8}}\right)p_{8,2}(a^{2}b^{2})\left(p_{8,1}(ab)p_{8,0}(a^{4})+p_{8,5}(ab)p_{8,4}(a^{4})\right)$. Now modulo $a^{8}$, $p_{8,1}(ab)=p_{8,0}(a)p_{8,1}(b)=c^{16}(b+c^{4})$. Also $p_{4,1}(ab)=p_{2,1}(ab)=ab+c^{2}d^{2}$. So modulo $a^{8}$, $p_{8,5}(ab)=p_{4,1}(ab)+c^{16}(b+c^{4})=ab+c^{2}d^{2}+c^{16}(b+c^{4})$. We conclude that $p_{8,3}\left(\frac{b^{3}}{a}\right)$ is the sum of an element of $S$ and $\frac{1}{a^{8}}(a^{2}b^{2}+c^{4}d^{4})\linebreak  \left(c^{16}(b+c^{4})b^{16}+(ab+c^{2}d^{2}+c^{16}(b+c^{4}))(a^{4}+b^{16})\right)$. A Macaulay 2 calculation shows that this last element is in $S$.
\end{proof}

\section{$\bm{l=11}$, $\bm{13}$ and $\bm{15}$}
\label{section7}

We state the results for these $l$ with very brief indications of proofs.

\begin{lemma}
\label{lemma7.1}
Let $a=[r]$ with $r$ prime to $l$. Then $p_{8,k}\left(\frac{1}{a}\right)$, $p_{16,2k}\left(\frac{1}{a}\right)$, $p_{32,4k}\left(\frac{1}{a}\right)$ and $p_{64,8k}\left(\frac{1}{a}\right)$ are all quotients of elements of $S$ by powers of $a$.
\end{lemma}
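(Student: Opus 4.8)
The plan is to treat all four families at once, observing that they are exactly the operators $p_{2^{s}\cdot 8,\,2^{s}k}$ for $s=0,1,2,3$ (moduli $8,16,32,64$, with projection index a multiple of $2^{s}$). The two facts I would isolate first are purely formal. First, for $s\ge 1$ one has $p_{2^{s}\cdot 8,\,2^{s}k}=p_{2^{s}\cdot 8,\,2^{s}k}\circ p_{2^{s},0}$, since the target component $x^{2^{s}k}L^{[2^{s}\cdot 8]}$ already lies in $L^{[2^{s}]}$, so pre-applying the projection $p_{2^{s},0}$ onto $L^{[2^{s}]}$ changes nothing. Second, iterating the identity $p_{2q,2j}(F^{2})=(p_{q,j}(F))^{2}$ yields the power rule $p_{2^{s}\cdot 8,\,2^{s}k}(G^{2^{s}})=(p_{8,k}(G))^{2^{s}}$.

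With these in hand the argument reduces to a single point: for $s=1,2,3$ the series $p_{2^{s},0}(1/a)$ is a $2^{s}$-th power of a quotient of an element of $S$ by a power of $a$. Granting this, write $p_{2^{s},0}(1/a)=G^{2^{s}}$ with $G=\sigma/a^{N}$, $\sigma\in S$; then
$$p_{2^{s}\cdot 8,\,2^{s}k}\Big(\tfrac{1}{a}\Big)=p_{2^{s}\cdot 8,\,2^{s}k}\big(G^{2^{s}}\big)=\big(p_{8,k}(G)\big)^{2^{s}},$$
and $p_{8,k}(G)$ is again a quotient of an element of $S$ by a power of $a$: writing $G=\sigma a^{7N}/a^{8N}$ and using $L^{[8]}$-linearity together with the stability of $S$ under $p_{8,k}$ (Lemma~\ref{lemma2.2}(2)), one gets $p_{8,k}(G)=a^{-8N}\,p_{8,k}(\sigma a^{7N})$ with $p_{8,k}(\sigma a^{7N})\in S$. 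Raising to the $2^{s}$-th power preserves this shape, so each of the four series has the asserted form. The case $s=0$ is immediate, since $p_{8,k}(1/a)=a^{-8}p_{8,k}(a^{7})$.

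It remains to verify the power statement for $p_{2^{s},0}(1/a)$. Put $b=[2^{-1}r]$ (well defined as $l$ is odd), so that $a=[2\cdot 2^{-1}r]$ and $p_{2,0}(a)=b^{4}$ by Lemma~\ref{lemma2.2}(1). For $s=1$ and $s=2$, Lemma~\ref{lemma3.10}(1),(2) give $p_{2,0}(1/a)=b^{4}/a^{2}=(b^{2}/a)^{2}$ and $p_{4,0}(1/a)=b^{12}/a^{4}=(b^{3}/a)^{4}$, which are of the required form with no further work.

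The only genuinely non-formal step---and the one I expect to be the main obstacle---is $s=3$: showing that $p_{8,0}(1/a)$ is an honest eighth power. Lemma~\ref{lemma3.10}(3) gives $p_{8,0}(1/a)=(b^{8}/a^{8})\,(p_{2,0}(ab))^{4}$, and a priori $(p_{2,0}(ab))^{4}$ is only a fourth power. The resolution is Lemma~\ref{lemma3.5}: writing $a=[2i]$ and $b=[2j]$ with $i\equiv 2^{-1}r$, $j\equiv 4^{-1}r\pmod{l}$, one finds $p_{2,0}(ab)=[i+j]^{2}[i-j]^{2}=w^{2}$ with $w=[i+j][i-j]\in S$. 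Hence $(p_{2,0}(ab))^{4}=w^{8}$ and $p_{8,0}(1/a)=(bw/a)^{8}$, an eighth power of $\sigma/a$ with $\sigma\in S$. Feeding $G=bw/a$ into the displayed formula then gives $p_{64,8k}(1/a)=(p_{8,k}(bw/a))^{8}$, a quotient of an element of $S$ by a power of $a$. Note that nothing here is special to $l=11,13,15$: the lemma holds for every odd $l$, which is precisely why it can serve as the common starting point for the case-by-case computations that follow.
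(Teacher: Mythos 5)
Your proposal is correct and follows essentially the same route as the paper: reduce $p_{8,k}(1/a)$ to $a^{-8}p_{8,k}(a^{7})$ via Lemma~\ref{lemma2.2}, and for the higher moduli compose with $p_{2^{s},0}$, exhibit $p_{2^{s},0}(1/a)$ as a $2^{s}$-th power using Lemma~\ref{lemma3.10} together with Lemma~\ref{lemma3.5} (your $w=[i+j][i-j]$ is exactly the paper's $ce$ after writing $r=4s$), and pull the power through the projection. The only difference is that you spell out the formal identities the paper leaves implicit.
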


\begin{proof}
$p_{8,k}\left(\frac{1}{a}\right)=\frac{1}{a^{8}}p_{8,k}(a^{7})$, and we use Lemma \ref{lemma2.2}. For the remaining results we may assume that $r=4s$. Let $b=[2s]$, $c=[s]$, $e=[3s]$ so that $p_{2,0}(a)=b^{4}$, $p_{2,0}(ab)=c^{2}e^{2}$. Then $p_{64,8k}\left(\frac{1}{a}\right)=p_{64,8k}p_{8,0}\left(\frac{1}{a}\right)$. By Lemma \ref{lemma3.10} this is the eighth power of $p_{8,k}\left(\frac{bce}{a}\right)$, and we use the fact that $p_{8,k}(a^{7}bce)$ is in $S$. $p_{16,2k}\left(\frac{1}{a}\right)$ and $p_{32,4k}\left(\frac{1}{a}\right)$ are treated similarly.
\end{proof}

\begin{theorem}
\label{theorem7.2}
Let $a=[r]$ with $r$ prime to $l$.
\begin{enumerate}
\item[(1)] When $l=11$, $p_{8,1}$, $p_{8,3}$, $p_{8,6}$, $p_{16,4}$, $p_{16,8}$, $p_{16,10}$, $p_{32,0}$, $p_{32,12}$ and $p_{64,16}$ all take $\frac{1}{a}$ to an element of $S$.
\item[(2)] When $l=13$, $p_{8,2}$, $p_{8,3}$, $p_{8,5}$, $p_{16,4}$, $p_{16,8}$, $p_{16,14}$, $p_{32,0}$, $p_{32,12}$ and $p_{64,16}$ all take $\frac{1}{a}$ to an element of $S$.
\item[(3)] When $l=15$, $p_{8,1}$, $p_{8,2}$, $p_{8,3}$, $p_{16,4}$, $p_{16,6}$, $p_{16,8}$, $p_{32,0}$, $p_{32,12}$ and $p_{64,16}$ all take $\frac{1}{a}$ to an element of $S$.
\end{enumerate}
\end{theorem}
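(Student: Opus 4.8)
The plan is to run, for $l=11,13,15$, the same program used for $l=5,7,9$ in Sections \ref{section4}--\ref{section6}, now organised around Lemma \ref{lemma7.1}. Fix $r$ prime to $l$ and put $a=[r]$; writing $r=4s$ one introduces the auxiliary theta series $b=[2s]$, $c=[s]$, $e=[3s]$ of that lemma and records the action of $p_{2,0}$ on $a$ and on the products $ab,\dots$ by Lemma \ref{lemma3.5}. The key observation is that every operator named in the theorem has one of the four shapes $p_{8,k}$, $p_{16,2k}$, $p_{32,4k}$, $p_{64,8k}$ handled by Lemma \ref{lemma7.1}: for instance $p_{16,4}=p_{16,2\cdot2}$, $p_{16,10}=p_{16,2\cdot5}$, $p_{32,12}=p_{32,4\cdot3}$ and $p_{64,16}=p_{64,8\cdot2}$. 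Hence Lemma \ref{lemma7.1} already displays each $p_{q,j}(1/a)$ as a quotient of an element of $S$ by a power of $a$; concretely it writes $p_{64,8k}(1/a)$ as the eighth power of $p_{8,k}(bce/a)=a^{-8}\,p_{8,k}(a^{7}bce)$, whose numerator lies in $S$ by Lemma \ref{lemma2.2}, and disposes of the $p_{8,k}$, $p_{16,2k}$ and $p_{32,4k}$ cases in the same way.

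It then remains to promote each of these quotients from the fraction field of $S$ into $S$ itself. Operator by operator I would expand the relevant $p_{8,k}(\,\cdot\,)$ using the product rule for $p_{q,j}(FG)$, the identity $p_{2q,2j}(F^{2})=(p_{q,j}(F))^{2}$, and Lemmas \ref{lemma2.2}, \ref{lemma3.5} and \ref{lemma3.10}, arriving at an explicit quotient $\phi_r(u)/\phi_r(v)$ of elements of $S$. Theorem \ref{theorem3.9} converts the claim $\phi_r(u)/\phi_r(v)\in S$ into the purely polynomial assertion $(N,v)=(N,u,v)$ in $\newz/2[x_{1},\dots,x_{m}]$, where $N$ is the ideal of quintic relations $R_{i,j}$, and this ideal equality I would check by a Gr\"obner-basis computation in Macaulay 2, exactly as in the earlier sections. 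Once each listed operator is verified, Corollary \ref{corollary3.4} yields relative density $0$ in the corresponding class, and running through all nine operators covers every class of $U^{*}$ except $n\equiv 48\pod{128}$.

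The hard part is the single further step to $p_{128,48}$, the operator that would settle the remaining class $n\equiv 48\pod{128}$ and that lies just beyond the reach of Lemma \ref{lemma7.1}. Reaching it requires the doubling device of Lemmas \ref{lemma4.3}, \ref{lemma5.3} and \ref{lemma6.3}: one produces an identity expressing a $p_{16,0}$-value as $(p_{8,4}(1/b))^{4}$ plus a perfect power lying in $S$, and then applies $p_{128,48}$ together with $p_{2q,2j}(F^{2})=(p_{q,j}(F))^{2}$ to reduce to shallower cases with $r$ replaced by $2r$, as in Theorem \ref{theorem6.5}. Two things can defeat this. The Gr\"obner computations balloon---for $l=2m+1$ there are $m(m-1)/2$ generators $R_{i,j}$, ten of them when $l=15$---and, more seriously, Theorem \ref{theorem3.9} supplies only a \emph{sufficient} condition, since for composite $l$ the ideal $N$ need not be the whole kernel of $\phi_r$; this already fails at $l=9$ and is known to hold only for $l=5,7$. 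For $l=11$ the corresponding computation for $p_{128,48}$ still succeeds, so that $U^{*}$ is covered completely as the introduction records; but for $l=13$ and $15$ the test $(N,v)=(N,u,v)$ attached to $p_{128,48}$ is not met, and the method reports nothing about $n\equiv 48\pod{128}$, which is exactly why that operator is absent from the statement.
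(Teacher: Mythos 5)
Your proof of the statement itself matches the paper's: the paper's ``Idea of proof'' is exactly to note that every listed operator falls under Lemma \ref{lemma7.1} (shapes $p_{8,k}$, $p_{16,2k}$, $p_{32,4k}$, $p_{64,8k}$), write each $p_{q,j}\left(\frac{1}{a}\right)$ explicitly as a quotient of an element of $S$ by a power of $a$, and then invoke the Gr\"obner-basis test of Theorem \ref{theorem3.9} in Macaulay 2. One correction to your closing commentary (which concerns material outside this theorem): for $l=11$ the paper does \emph{not} verify $p_{128,48}\left(\frac{1}{a}\right)\in S$ by a direct ideal computation --- it explicitly says it does not know whether that holds --- but instead uses Lemma \ref{lemma7.4} to show membership in $p_{128,48}(S)$, which is all Corollary \ref{corollary3.4} requires; and for $l=13,15$ the obstruction is the absence of an analogue of Lemma \ref{lemma7.4}, not a failed test $(N,v)=(N,u,v)$.
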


\begin{proofidea*}
By Lemma \ref{lemma7.1} each $p_{q,j}\left(\frac{1}{a}\right)$ is the quotient of an element of $S$ by a power of $a$. It's clear that one can write down such a representation explicitly. In each case the Macaulay 2 argument using the ideal $N$ of quintic relations shows that $p_{q,j}\left(\frac{1}{a}\right)$ is in fact in $S$.
\end{proofidea*}

\begin{corollary}
\label{corollary7.3}
Suppose $l=11$, $13$, or $15$. Then in each of the mod 128 congruence classes constituting $U^{*}$, with the possible exception of the congruence class $n\equiv 48\pod{128}$, $B(a)$ has relative density 0
\end{corollary}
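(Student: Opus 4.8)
The plan is to read the corollary off from Theorem~\ref{theorem7.2}, Corollary~\ref{corollary3.4}, and the tabular description of $U^{*}$ in Section~\ref{section1}. Fix $l \in \{11,13,15\}$ and put $a = [r]$ with $r$ prime to $l$. The table presents $U^{*}$ (among the nonnegative integers) as a disjoint union of congruence classes $j \bmod q$ with $q \in \{8,16,32,64,128\}$. The first step is the purely bookkeeping observation that, in the row for $l$, the pairs $(q,j)$ are---\emph{with the single exception} of $(128,48)$---exactly the pairs for which the operator $p_{q,j}$ is listed in the matching part of Theorem~\ref{theorem7.2}. For example, when $l=11$ the table entries $1,3,6 \bmod 8$, then $4,8,10 \bmod 16$, then $0,12 \bmod 32$, then $16 \bmod 64$ correspond one-for-one to $p_{8,1},p_{8,3},p_{8,6},p_{16,4},p_{16,8},p_{16,10},p_{32,0},p_{32,12},p_{64,16}$; the cases $l=13$ and $l=15$ have the identical structure, only the residues changing.

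Granting this correspondence, the substance is immediate. For each listed pair $(q,j) \ne (128,48)$, Theorem~\ref{theorem7.2} gives $p_{q,j}(1/a) \in S$, so Corollary~\ref{corollary3.4} yields that $B(a)$ has relative density $0$ in the class $j \bmod q$. It then remains only to descend to the mod $128$ classes named in the statement. When $q < 128$ the class $j \bmod q$ is the disjoint union of the $128/q$ classes $j' \bmod 128$ with $j' \equiv j \pmod{q}$. For any such $j'$, the part of $B(a)$ lying in $j' \bmod 128$ is contained in the part lying in $j \bmod q$, while the number of integers $\le x$ in $j' \bmod 128$ is asymptotic to $(q/128)$ times the number in $j \bmod q$; dividing, relative density $0$ in the coarser class forces relative density $0$ in each finer class $j' \bmod 128$. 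Running over all pairs in the row for $l$ except $(128,48)$ thus covers every mod $128$ class constituting $U^{*}$ apart from $n \equiv 48 \pmod{128}$, which is the asserted exception.

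The only real content sits in the hypothesis Theorem~\ref{theorem7.2}, whose proof rests on Lemma~\ref{lemma7.1} and the Gr\"obner-basis computations in Macaulay~2. Accepting that, nothing further is hard: the corollary is just the combination of matching the operator list against the $U^{*}$ table and the elementary refinement of relative densities. The single point worth flagging---and the source of the exception---is that the method produces no operator $p_{128,48}$ carrying $1/a$ into $S$, so it gives no information about $n \equiv 48 \pmod{128}$; handling that residue class would require an additional idea beyond the quintic-relation bookkeeping used here.
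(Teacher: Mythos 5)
Your proposal is correct and follows exactly the paper's own (one-line) proof: combine Theorem~\ref{theorem7.2} with Corollary~\ref{corollary3.4} and the tabulated description of $U^{*}$, the only added content being the routine refinement from a class mod $q$ to its subclasses mod $128$. The bookkeeping match between the operator lists and the table rows for $l=11,13,15$ is accurate, including the identification of $48 \bmod 128$ as the sole uncovered class.
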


\begin{proof}
This follows from Theorem \ref{theorem7.2}, Corollary \ref{corollary3.4} and the explicit description of $U^{*}$ as a union of congruence classes.
\end{proof}

I'll now show that when $l=11$ each $B(a)$ in fact has relative density 0 in the congruence class 48 mod 128.

\begin{lemma}
\label{lemma7.4}
When $l=11$, $p_{8,0}\left(\frac{1}{a}\right)+\left(p_{8,4}\left(\frac{1}{b}\right)\right)^{4}=u^{8}$ for some $u$ in $S$.
\end{lemma}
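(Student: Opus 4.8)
The plan is to follow the template of Lemmas \ref{lemma4.3}, \ref{lemma5.3}, and above all \ref{lemma6.3}: expand each of the two summands on the left as an explicit power of a monomial in the theta series, and then collapse their sum into a single eighth power using the characteristic-$2$ identity $(X+Y+Z)^{8}=X^{8}+Y^{8}+Z^{8}$. First I would name the theta series produced by repeatedly halving the index. With $a=[r]$, the series $b$ satisfying $p_{2,0}(a)=b^{4}$ is $b=[5r]$, since $2^{-1}\equiv 6\equiv -5\pmod{11}$; iterating, $p_{2,0}(b)=[3r]^{4}$, and one meets the five series $[r],[5r],[3r],[4r],[2r]$ permuted cyclically by halving. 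I would fix names $a=[r]$, $b=[5r]$, $c=[3r]$, $d=[4r]$ (the fifth series $[2r]$ also entering), and record, via Lemma \ref{lemma3.5}, the products $p_{2,0}(ab)=[3r]^{2}[2r]^{2}$ and $p_{2,0}(bc)=[4r]^{2}[r]^{2}$.

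Next I would compute the two summands. Lemma \ref{lemma3.10}(3) gives $p_{8,0}\left(\frac{1}{a}\right)=\frac{b^{8}}{a^{8}}\bigl(p_{2,0}(ab)\bigr)^{4}$, which is manifestly an eighth power of a monomial fraction. For the second summand I would use $p_{8,4}=p_{4,0}+p_{8,0}$ (valid in characteristic $2$) to write $p_{8,4}\left(\frac{1}{b}\right)=p_{4,0}\left(\frac{1}{b}\right)+p_{8,0}\left(\frac{1}{b}\right)$, and then apply Lemma \ref{lemma3.10}(2),(3) with $b$ in the role of $a$; each piece is again a power of a monomial fraction, so $\left(p_{8,4}\left(\frac{1}{b}\right)\right)^{4}$ breaks into two terms, each itself an eighth power. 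Adding the resulting three eighth powers and applying the freshman's dream exhibits the left side as $u^{8}$, where $u$ is an explicit sum of three monomial fractions in $a,b,c,d,[2r]$ lying in the fraction field of $S$.

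The remaining step, which is the real work, is to show that this $u$ in fact lies in $S$. I would clear denominators, writing $u=\phi_{r}(\mathrm{num})/\phi_{r}(\mathrm{den})$ for honest polynomials $\mathrm{num},\mathrm{den}\in\newz/2[x_{1},\ldots,x_{5}]$, and then invoke Theorem \ref{theorem3.9}: a Macaulay 2 verification that $(N,\mathrm{den})=(N,\mathrm{num},\mathrm{den})$, for the ideal $N$ of quintic relations, completes the proof. This Gr\"{o}bner-basis check is the main obstacle---as in the $l=9$ case there is no evident hand argument, and it is precisely the point where the quintic theta relations are needed. A subsidiary hazard is the index bookkeeping: one must track the halving cycle $1\to 5\to 3\to 4\to 2$ modulo $11$ (together with the identification $[j]=[-j]$) so that the correct theta series is inserted at each application of Lemma \ref{lemma3.10}.
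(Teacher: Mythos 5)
Your proposal matches the paper's argument: both express $p_{8,0}\left(\frac{1}{a}\right)$ and $\left(p_{8,4}\left(\frac{1}{b}\right)\right)^{4}$ as eighth powers of explicit quotients of elements of $S$ via Lemmas \ref{lemma3.10} and \ref{lemma3.5}, collapse the sum by Frobenius into $u^{8}$ with $u$ a quotient of elements of $S$, and finish with the Macaulay 2 / Theorem \ref{theorem3.9} verification that $u$ lies in $S$. The only immaterial difference is that the paper leaves $p_{8,4}\left(\frac{1}{b}\right)$ as $\frac{1}{b^{8}}p_{8,4}(b^{6}c^{4})$, the quotient of a square in $S$ by $b^{8}$, rather than splitting it as $p_{4,0}\left(\frac{1}{b}\right)+p_{8,0}\left(\frac{1}{b}\right)$; your index bookkeeping ($b=[5r]$, halving cycle $1\to 5\to 3\to 4\to 2$) is consistent with the paper's convention $r=4s$, $b=[2s]$, $c=[s]$, $e=[3s]$.
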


\begin{proofidea*}
As we noted in the proof of Lemma \ref{lemma7.1}, $p_{8,0}\left(\frac{1}{a}\right)=\left(\frac{bce}{a}\right)^{8}$. Furthermore $p_{8,4}\left(\frac{1}{b}\right)=p_{8,4}p_{2,0}\left(\frac{1}{b}\right)=\left(\frac{1}{b^{8}}\right)p_{8,4}(b^{6}c^{4})$. This is  the quotient of a square in $S$ by $b^{8}$. It follows that the left-hand side in the statement of Lemma \ref{lemma7.4} is the eighth power of $\frac{v}{ab^{4}}$ for some $v$ in $S$. Our usual Macaulay 2 technique shows that $\frac{v}{ab^{4}}$ is in fact in $S$.
\end{proofidea*}

\begin{theorem}
\label{theorem7.5}
When $l=11$, $p_{128,48}\left(\frac{1}{a}\right)$ is in $p_{128,48}(S)$. In fact it's the eighth power of an element of $p_{16,6}(S)$. Corollary \ref{corollary3.4} then shows that $B(a)$ has relative density 0 in the congruence class 48 mod 128, and consequently in $U^{*}$.
\end{theorem}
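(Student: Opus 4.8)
The plan is to apply the projection $p_{128,48}$ to the identity of Lemma \ref{lemma7.4} and to track each term through the two structural rules for these operators: the composition rule $p_{q,j}\circ p_{q',j'}=p_{q,j}$ whenever $q'\mid q$ and $j\equiv j'\pmod{q'}$, and the iterated square rule $p_{2^{k}q,\,2^{k}j}\bigl(F^{2^{k}}\bigr)=\bigl(p_{q,j}(F)\bigr)^{2^{k}}$. As in Lemma \ref{lemma7.1} I write $r=4s$ and set $b=[2s]$, $c=[s]$, $e=[3s]$, so that $p_{2,0}(a)=b^{4}$ and $p_{2,0}(ab)=c^{2}e^{2}$; Lemma \ref{lemma3.10}(3) then gives $p_{8,0}(1/a)=(bce/a)^{8}$.

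First I would apply $p_{128,48}$ to Lemma \ref{lemma7.4}. Since $48\equiv 0\pmod 8$, the composition rule gives $p_{128,48}\bigl(p_{8,0}(1/a)\bigr)=p_{128,48}(1/a)$. Writing $8=2^{3}$, $48=2^{3}\cdot 6$, $128=2^{3}\cdot 16$, the square rule gives $p_{128,48}\bigl(u^{8}\bigr)=\bigl(p_{16,6}(u)\bigr)^{8}$; and applying the square rule to $\bigl(p_{8,4}(1/b)\bigr)^{4}$ together with $p_{32,12}\circ p_{8,4}=p_{32,12}$ gives $\bigl(p_{32,12}(1/b)\bigr)^{4}$. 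Hence
\[
p_{128,48}(1/a)=\bigl(p_{16,6}(u)\bigr)^{8}+\bigl(p_{32,12}(1/b)\bigr)^{4}.
\]

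Next I would rewrite the second summand as a perfect eighth power. Applying Lemma \ref{lemma3.10}(2) to $b$ (here $p_{2,0}(b)=c^{4}$) gives $p_{4,0}(1/b)=c^{12}/b^{4}=(c^{3}/b)^{4}$, so by the square rule $p_{32,12}(1/b)=p_{32,12}\bigl((c^{3}/b)^{4}\bigr)=\bigl(p_{8,3}(c^{3}/b)\bigr)^{4}$. Setting $Z=\bigl(p_{8,3}(c^{3}/b)\bigr)^{2}$ we get $\bigl(p_{32,12}(1/b)\bigr)^{4}=Z^{8}$, and since we are in characteristic $2$,
\[
p_{128,48}(1/a)=\bigl(p_{16,6}(u)\bigr)^{8}+Z^{8}=\bigl(p_{16,6}(u)+Z\bigr)^{8}.
\]
It then remains to check that $p_{16,6}(u)+Z$ lies in $p_{16,6}(S)$. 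The first term does because $u\in S$. For $Z$, note that $p_{8,3}$ stabilizes $S$ by Lemma \ref{lemma2.2}, so $p_{8,3}(c^{3}/b)=p_{8,3}\bigl(c^{3}b^{7}\bigr)/b^{8}$ is a quotient of two elements of $S$; the Macaulay 2 argument of Theorem \ref{theorem3.9} (checking the appropriate ideal equality) shows this quotient lies in $S$, whence $Z\in S$. As $Z\in x^{6}L^{[16]}$ we have $Z=p_{16,6}(Z)$, so $p_{16,6}(u)+Z=p_{16,6}(u+Z)$ with $u+Z\in S$; this exhibits $p_{128,48}(1/a)$ as the eighth power of an element of $p_{16,6}(S)$. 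Finally $\bigl(p_{16,6}(u+Z)\bigr)^{8}=p_{128,48}\bigl((u+Z)^{8}\bigr)$ with $(u+Z)^{8}\in S$, so $p_{128,48}(1/a)\in p_{128,48}(S)$, and Corollary \ref{corollary3.4} yields relative density $0$ of $B(a)$ in the class $48\pmod{128}$; combined with Corollary \ref{corollary7.3} this gives relative density $0$ in all of $U^{*}$.

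The only non-formal step, and the one I expect to be the main obstacle, is the verification $p_{8,3}(c^{3}/b)\in S$: everything else is bookkeeping with the composition and square rules, but this membership is exactly the sort of assertion the paper settles only by a Gr\"obner-basis computation, expressing $p_{8,3}\bigl(c^{3}b^{7}\bigr)$ as a polynomial in the $[i]$ and invoking Theorem \ref{theorem3.9} with $v=b^{8}$. I would also double-check the harmless reduction ``$r=4s$'', namely that it suffices to treat, for each $r$ prime to $l$, the representative $s\equiv 4^{-1}r\pmod{l}$, since $[i]$ depends only on $\pm i\pmod{l}$.
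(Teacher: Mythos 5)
Your argument is correct and follows essentially the same route as the paper: apply $p_{128,48}$ to Lemma \ref{lemma7.4}, identify $\left(p_{32,12}\left(\frac{1}{b}\right)\right)^{4}$ as the eighth power of $\left(p_{8,3}\left(\frac{c^{3}}{b}\right)\right)^{2}=p_{16,6}\left(\frac{c^{6}}{b^{2}}\right)$, and combine the two eighth powers in characteristic $2$. The one thing you flag as needing a fresh Gr\"obner-basis check, namely $p_{8,3}\left(\frac{c^{3}}{b}\right)\in S$, the paper observes is already contained in the Macaulay 2 computations behind Theorem \ref{theorem7.2} (the verification that $p_{32,12}$ takes $\frac{1}{b}$ into $S$ proceeds exactly through this element), so no new computation is required.
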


\begin{proof}
Applying $p_{128,48}$ to the identity of Lemma \ref{lemma7.4} we find that $p_{128,48}\left(\frac{1}{a}\right)+\left(p_{32,12}\left(\frac{1}{b}\right)\right)^{4}=\left(p_{16,6}(u)\right)^{8}$. Now $p_{32,12}\left(\frac{1}{b}\right)=p_{32,12}p_{4,0}\left(\frac{1}{b}\right)=p_{32,12}\left(\frac{c^{12}}{b^{4}}\right)$, which is the square of $p_{16,6}\left(\frac{c^{6}}{b^{2}}\right)$.  So $p_{128,48}\left(\frac{1}{a}\right)$ is the eighth power of $p_{16,6}\left(\frac{c^{6}}{b^{2}}\right)+p_{16,6}(u)$, and it will suffice to show that $p_{16,6}\left(\frac{c^{6}}{b^{2}}\right)$ is in $S$. In fact, $p_{8,3}\left(\frac{c^{3}}{b}\right)$ is in $S$; the Macaulay 2 calculations going into the proof of Theorem \ref{theorem7.2} show this.
\end{proof}

\begin{remarks*}
We've established various zero-density results when $l\le 15$. If we take $l>15$, computer trouble arises. Suppose for example we restrict ourselves to congruence classes to the modulus 8 that lie in $U^{*}$. Then necessarily $l\le 21$ or $l=25$. When $l=17$, the classes $n\equiv 5\pod{8}$ and $n\equiv 6\pod{8}$ are in $U^{*}$. But the ideal $N$ in $\newz/2[x_{1},\ldots ,x_{8}]$ has 28 generators, and attempts, using Macaulay 2, to show that $p_{8,5}\left(\frac{1}{a}\right)$ (or $p_{8,6}\left(\frac{1}{a}\right)$) is in $S$ cause a computer crash. Indeed the computer seemed at its limit in handling the congruence class $n\equiv 16\pod{64}$ when $l=15$; it was an all-day calculation.

For $l=11$ I don't know whether Theorem \ref{theorem7.5} can be strengthened to show that $p_{128,48}\left(\frac{1}{a}\right)$ is in $S$. When $l=13$ or $15$ it's possible that, as in the case $l=11$, $p_{128,48}\left(\frac{1}{a}\right)$ is the eighth power of an element of $p_{16,6}(S)$. But there's no analogue of Lemma \ref{lemma7.4} that could be used to prove this.
\end{remarks*}

\section{The basic classes --- a little computer evidence}

Fix $l$ together with $r$ prime to $l$ and a basic congruence class $C$. All the elements of $B([r])$ are $\ge -r^{2}$ and are congruent to $-r^{2}$ mod $l$. There is some evidence that $B([r])$ has density $\frac{1}{2l}$ in $C$, so that ``half the elements of $C$ that are $\ge -r^{2}$ and are congruent to $-r^{2}$ mod $l$ lie in $B([r])$.''

Suppose for example that $l\le 9$ and we are looking at the basic classes to the modulus 8. These are:

\centerline{
\begin{tabular}{lll}
(1) & $l=3$ & $n\equiv 7\pod{8}$\\
(2) & $l=5$ & $n\equiv 7\pod{8}$\\
(3) & $l=7$ & $n\equiv 7\pod{8}$\\
(4) & $l=9$ & $n\equiv 1\mbox{ or }7\pod{8}$\\
\end{tabular}
}

Consider the first $2^{17}=131,072$ elements of $C$ that are $\ge -r^{2}$ and congruent to $-r^{2}$ mod $l$. The number of these lying in $B([r])$ has been calculated by O'Bryant \cite{3}. Here are his results.
\vspace{2ex}

\centerline{
\begin{tabular}{lllllllll}
(1) & $l=3$ & $n\equiv 7\pod{8}$, & $r=1$ & $65,411$\\
(2) & $l=5$ & $n\equiv 7\pod{8}$, & $r=1$ & $65,397$  & $r=2$ & $65,713$\\
(3) & $l=7$ & $n\equiv 7\pod{8}$, & $r=1$ & $65,185$ & $r=2$ & $65,474$  & $r=3$ & $65,622$\\
(4) & $l=9$ & $n\equiv 1\pod{8}$, & $r=1$ & $65,495$  & $r=2$ & $65,666$  & $r=4$ & $65,367$\\
&& $n\equiv 7\pod{8}$ , & $r=1$ & $65,877$  & $r=2$ & $65,579$  & $r=4$ & $65,813$\\
\end{tabular}
}

We may also consider the basic congruence class $n\equiv 14\pod{16}$ when $l=7$. Now if we consider the first 65,536 elements of the class that are $\equiv -r^{2}$ mod 7 and $\ge -r^{2}$, the number in $B([r])$ is 32,673 when $r=1$. It is 32,716 when $r=2$ and 32,981 when $r=3$. All this suggests the following:

\begin{speculation*}
Suppose that $\rho >\frac{1}{2}$. Consider a basic class $C$ and the first $X$ elements in the class that are $\ge -r^{2}$ and congruent to $-r^{2}$ mod $l$. Of these elements, the number in $B([r])$ is $\frac{X}{2}+ O(X^{\rho})$.
\end{speculation*}

We might go even further, speculating that this is true not only for the basic classes, but for any congruence class contained in a a basic class.

It would be interesting to test these speculations further experimentally. But some caution is in order. Suppose for example that $l=9$. Then the congruence class $n\equiv 2\pod{4}$ is contained in $U^{*}$, and as we've seen, $B([1])$, $B([2])$ and $B([4])$ all have relative density 0 in this class. Consider now the first $2^{18}=262,144$ elements of this class that are $\ge -r^{2}$ and congruent to $-r^{2}$ mod 9. The number of these elements that lie in $B([r])$ is 102,284 when $r=1$, and 110,034 when $r=2$. This is in good accord with our zero-density result. But when $r=4$ more than half of the elements are in $B([r])$! (The number is 137,657.) So we are advised not to place too much predictive power in such computer counts unless the range over which we're counting is considerably extended.


\label{}



\end{document}